\newtheorem{Theorem}{Theorem}[section]
\newtheorem{theorem}[Theorem]{Theorem}
\newtheorem{lemma}[Theorem]{Lemma}
\newtheorem{remark}[Theorem]{Remark}
\newtheorem{Remark}[Theorem]{Remark}
\newcommand{\R}{{\mathbb R}}
\newcommand{\N}{{\mathbb N}}
 \newcommand{\cV}{{\mathcal V}}
\newcommand{\al}{\alpha}
\newcommand{\be}{\beta}
\newcommand{\ga}{\gamma}
\newcommand{\de}{\delta}
\newcommand{\om}{\omega}
\newcommand{\Om}{\Omega}
\newcommand{\la}{\lambda}
\newcommand{\codt}{\cdot} 
\def\e{\varepsilon}
\def\a{\alpha}
\def\b{\beta}
\def\la{\lambda}
\def\d{\delta}
\def\vp{\varphi}
\def\lp{\left(}
\def\rp{\right)}
\def\lb{\left|}
\def\rb{\right|}
\def\lV{\left\Vert}
\def\rV{\right\Vert}
\def\ME{\mathcal{E}}
\def\MO{\mathcal{O}}
\def\MP{\mathcal{P}}
\newtheorem{prop}[Theorem]{Proposition}
 \newtheorem{lem}[Theorem]{Lemma}
 \newtheorem{cor}[Theorem]{Corollary}
\numberwithin{equation}{section}
\title{Large-time behavior of solutions of parabolic 
equations on the real line with convergent initial data}
\author{Antoine Pauthier and Peter Pol\'{a}\v{c}ik\footnote{Supported in part by the NSF
    Grant DMS-1565388}  \\
{\small School of Mathematics, University of Minnesota}\\
{\small Minneapolis, MN 55455}
}
\date{}
\begin{document}
\maketitle
\begin{abstract}
 We consider the semilinear parabolic equation $u_t=u_{xx}+f(u)$ on
 the real line, where $f$ is a locally
  Lipschitz function on $\R.$ 
We prove that if a solution $u$ 
of this equation is  bounded
and its initial value $u(x,0)$ has distinct limits at 
$x=\pm\infty,$ then the solution is quasiconvergent, that is, all its
limit profiles as $t\to\infty$ are steady states.  
\end{abstract}

{\emph{Key words}: Parabolic equations on the real line, convergent
  initial data, quasiconvergence, convergence}

\section{Introduction}
 Consider the Cauchy problem 
 \begin{align}
 u_t=u_{xx}+f(u), & \qquad x\in\R,\ t>0, \label{eq1}\\
 u(x,0)=u_0(x), & \qquad x\in\R, \label{ic1}
 \end{align}
where $f$ is  a  locally
  Lipschitz  function  on $\R$ and
$u_0\in C_b(\R):=C(\R)\cap L^\infty(\R)$. 
  We denote by $u(\cdot,t,u_0)$ the unique
classical solution of (\ref{eq1})-(\ref{ic1}) 
and by $T(u_0)\in(0,+\infty]$ its maximal existence time. If $u$ is
bounded on $\R\times[0,T(u_0))$, then necessarily $T(u_0)=+\infty,$
that is, the solution is global. In this paper, we are concerned with
the behavior of bounded solutions as  $t\to\infty$. A basic question
we specifically want to address is whether, or to what extent, 
the large-time behavior of bounded solutions is governed by steady
states of \eqref{eq1}.  

This question has long  been settled for equation \eqref{eq1} 
 considered on a bounded interval, instead
of $\R$, and complemented by one of common boundary conditions, say Dirichlet,
Neumann, Robin, or periodic. Namely, in that case each bounded solution 
converges, uniformly on the spatial interval,  to a steady state 
\cite{Chen-M:JDE, Matano:conv, Zelenyak}.  In contrast, the large-time
behavior of equation \eqref{eq1} on $\R$ is not generally so 
simple and is much less understood. 

To talk about the behavior in more specific terms, recall that, 
by standard parabolic regularity estimates,
any bounded solution of \eqref{eq1} has relatively 
compact orbit in $L^\infty_{loc}(\R)$. In other words, any
sequence $t_n\to\infty$ has a subsequence $\{t_{n_k}\}$ such that 
$u(\cdot,t_{n_k})\to \varphi$ in $L^\infty_{loc}(\R)$ for some
continuous function $\varphi$. It is therefore natural to use the
topology of $L^\infty_{loc}(\R)$ when considering the 
convergence of solutions and related issues. Thus, we say that
a bounded solution $u$ is \emph{convergent} if for some $\varphi$ one has 
 $u(\cdot,t)\to\varphi$  locally uniformly on $\R$.
Of course, the convergence may take place in stronger topologies, but we take 
the convergence in $L_{loc}^\infty(\R)$, the topology in which  
the orbit is compact,  as a natural minimal
requirement. 
 
While the convergence of the solution of \eqref{eq1}, \eqref{ic1} has been
proved under various conditions on $u_0$ and $f$ 
\cite{Chen-L-Z-G,Du-M,p-Du, Fasangova, Fasangova-F,
  Feireisl:long-time, p-Fe, p-Ma:1d, Muratov-Z, P:unbal,Zlatos:sharp},
it is not the general behavior of bounded solutions
even when $f\equiv 0$, that is, when \eqref{eq1} is the linear heat
equation. As observed in \cite{Collet-E}, if $u_0$ takes values $0$
and $1$ on suitably spaced long intervals with
sharp transitions between them, then, as $t\to\infty$, $u(\codt,t)$ 
 approaches $0$ along a sequence of times $t_n\to
\infty$ and  $1$ along another such sequence 
(the convergence is in $L_{loc}^\infty(\R)$ in both cases). 

As we explain shortly, 
for the linear equation the large-time behavior of any bounded
solution is still governed by steady states in the sense that every
limit profile of any such solution is a steady state. 
Here a \emph{limit profile} of a bounded solution
$u$ of \eqref{eq1} refers to any element of the 
$\omega$-limit set of  $u$:
\begin{equation}\label{defomega}
 \omega(u):=\left\{ \vp\in C_b(\R):\ u(\cdot,t_n)\to\vp \textrm{
     for some sequence }t_n\to\infty\right\},
\end{equation}
where the convergence is in $L^\infty_{loc}(\R)$. 
If the solution $u$ corresponds to a given initial datum $u_0$, we
also write $\om(u_0)$ for $\om(u)$.   We say that a bounded solution 
$u$ of \eqref{eq1} is \emph{quasiconvergent} 
if $\om(u)$ consists entirely of steady states. 
Thus, a quasiconvergent solution approaches a set of steady states,
from which it follows that $u_t(\cdot,t)\to 0$, locally uniformly on
$\R$, as $t\to\infty$.  This makes  quasiconvergent solutions 
hard to distinguish---numerically, 
for example---from convergent solutions; they 
move very slowly at large times. 

In the case of the linear heat equation, 
the quasiconvergence of each bounded solution  
follows from the invariance property of  the
$\om$-limit set: $\om(u)$ consists 
of \emph{entire} solutions of $\eqref{eq1}$, by which we mean solutions
defined for all $t\in \R$. If $u$ is bounded, then
the entire solutions in $\om(u)$ are bounded as well and, 
by the Liouville theorem for the linear heat equation, 
all such solutions are constant. 

In nonlinear equations, a common way to 
prove the quasiconvergence of a solution is by
means of a Lyapunov functional. For equation \eqref{eq1}, the
following  energy functional is used frequently: 
\begin{equation}
  \label{eq:1}
  E(v):=\int_{-\infty}^\infty \big(\,\frac{v^2_x(x)
  }2-F(v(x))\big)\,dx,\qquad F(v):=\int_0^vf(s)\,ds.  
\end{equation}
Of course, for this functional to be defined along a solution, one
needs assumptions on $f$ and $u$; but when such assumptions are made,
it can be shown  that $t\mapsto E(u(\codt,t))$ is nonincreasing 
and consequently  $u$ is quasiconvergent (see, for
example, \cite{Feireisl:long-time} for results of this form). 

For solutions which are not assumed to be bounded in an integral norm,
the energy $E$ is not such a powerful tool.%
\footnote{Note, however, 
 that \cite{Gallay-S,Gallay-S2} made a good use of \eqref{eq:1} with the
 integral taken over the intervals $(-R,R)$, $R\gg 1$, instead of 
 $(-\infty,\infty)$. As proved in  \cite{Gallay-S},  the $\om$-limit
 set  of each bounded solution contains a steady state. A localized
 form of the energy functional is also used in the recent paper
 \cite{Risler:relaxation} in the proof of a quasiconvergence theorem for
 bistable solutions of gradient reaction-diffusion systems.  
} In fact, bounded solutions of nonlinear
equations \eqref{eq1} are not quasiconvergent in general. 
The existence of non-quasiconvergent solutions for some equations of
the form \eqref{eq1} was strongly indicated by results of
\cite{Eckmann-R}. It was later demonstrated by various examples
in \cite{P:examples, P:unbal}. Moreover, the results of 
\cite{P:examples, P:unbal} show that non-quasiconvergent 
bounded solutions occur quite ``frequently'' in \eqref{eq1}. They
exist whenever there is an interval $[a,b]$ on which $f$ is bistable:
$ f(a)$ and $f(b)$ are equal to zero, 
$ f'(a)$ and $f'(b)$ are negative, and there is $\ga\in (a,b)$ such
that $f$ is negative in $(a,\ga)$ and positive in $(\ga,b)$.
This is clearly a
 robust class of nonlinearities.

On the other hand, several classes of initial data $u_0$ have been
identified for which the solutions are quasiconvergent, if bounded. 
One such class is given by nonnegative  localized
data in the case $f(0)=0$ (cp. \cite{p-Ma:1d}).
We say that $u_0$ is \emph{localized} if
$u_0$ belongs to $C_0(\R)$---the space of all continuous
functions on $\R$ converging to $0$ at $x=\pm\infty$.
Under the stronger condition that $u_0\ge 0$ has compact support,
the solution has even been proved to bevcovnergent, if bounded;
see \cite{Du-M}.
Another suitable class of initial data
consists of front-like functions, by which  we mean functions
 $u_0\in C(\R)$ satisfying the relations
$a\le u_0\le b$, and having the limits $u_0(-\infty)=b$ and $u_0(\infty)=a$,
for some zeros  $a<b$ of $f$. The quasiconvergence in this case was
proved in \cite{P:prop-terr} (see 
\cite{P:quasiconv-overview} for a more detailed overview
of quasiconvergence and related results for equations of the form \eqref{eq1}). 
Note that in the case of localized initial data,
the quasiconvergence theorem is not valid without the sign
restriction;  examples of non-quasiconvergent solutions 
\cite{P:examples, P:unbal} do include some with sign-changing initial
data in  $C_0(\R)$. On the other hand, there are alternatives
to the sign condition that also guarantee
the quasiconvergence of solutions with localized initial data.
This is the subject of a sequel to the present paper.
A theorem in this direction, which moreover
applies in a more general setting of gradient reaction-diffusion systems,
can also be found in \cite{Risler:relaxation}.

In this paper,  we consider a class of initial data which includes in
particular all front-like initial data, but
without any sign restrictions like $a\le u_0\le b$.
Namely, we consider initial data $u_0$ in the space 
\begin{equation}\label{spacelimit}
 \cV:=\left\{ v\in C_b(\R):\textrm{ the limits }
v(-\infty),\,v(+\infty)\in \R \textrm{ exist}\right\}.
\end{equation}
Note that the property of having 
finite limits at  $\pm\infty$ is preserved by the
solutions of \eqref{eq1}, \eqref{ic1}: if $u_0(\pm \infty)$ exist, then 
$u(\pm\infty,t,u_0)$  exist for all $t\in (0,T(u_0))$ (these limits
vary with $t$ in general, see Lemma \ref{valueinfty} below 
for a more precise statement). This means that the space $\cV$ 
is an invariant space for \eqref{eq1}, just like the space 
$C_b(\R)$, or the space $C_0(\R)$ in the case  $f(0)=0$. Since 
$\cV$ is a closed subspace of $C_b(\R)$, it is
a Banach space when equipped with the supremum norm.  

Of course,  $\cV$ contains functions $u_0$ with
$u_0(-\infty)=u_0(\infty)$---in particular, it contains
$C_0(\R)$---so we do not have quasiconvergence of all bounded 
solutions with initial data $u_0\in \cV$. As it turns out, however, 
$u_0(-\infty)=u_0(\infty)$ is the only case when the quasiconvergence 
may fail to hold. This is a part of our main theorem, which we state
precisely after introducing some notation and terminology. 

Consider the ordinary differential equation for the steady states of
\eqref{eq1}:
\begin{equation}\label{steadyeq}
 u_{xx}+f(u)=0,\qquad x\in\R.
\end{equation}
The corresponding first-order system,
\begin{equation}\label{sys}
 u_x=v,\qquad v_x=-f(u),
\end{equation}
is a Hamiltonian system, which has only four types of bounded
orbits: equilibria, 
nonstationary periodic  orbits (or, closed orbits), 
homoclinic orbits, and heteroclinic orbits. We adopt 
the following common terminology concerning  steady states $\varphi$ of 
\eqref{eq1}. We say $\varphi$ is a \emph{ground state} of 
\eqref{steadyeq} if the orbit of
$(\varphi,\varphi')$ is a homoclinic orbit of \eqref{sys}; and 
$\varphi$ is a \emph{standing wave} of \eqref{eq1} 
 if the orbit of $(\varphi,\varphi')$ is a heteroclinic orbit 
of \eqref{sys}.

Under our standing hypothesis 
that $f$ is locally Lipschitz on $\R$, we have the following result. 

\begin{theorem}\label{mainthm}
 Assume that $u_0\in\cV$ and $u_0(-\infty)\ne u_0(\infty)$.
If the solution $u(\cdot,\cdot,u_0)$ of \emph{(\ref{eq1}), (\ref{ic1})} is
bounded, then it is quasiconvergent: $\omega(u_0)$ consists entirely of
steady states of  \emph{(\ref{eq1}).} More specifically, if 
$\varphi\in \omega(u_0)$, then 
it is  a constant steady state, or a ground state of
\emph{\eqref{steadyeq},} or a  
standing wave of  \emph{(\ref{eq1})}.
\end{theorem}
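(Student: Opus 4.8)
The plan is to combine intersection--comparison (zero number) arguments with the dynamics of the boundary values $u(\pm\infty,t)$ and with the phase--plane structure of the steady--state equation \eqref{steadyeq}. Replacing $u_0(x)$ by $u_0(-x)$ if necessary, I assume $u_0(-\infty)<u_0(+\infty)$. By Lemma \ref{valueinfty}, the boundary values $\theta_\pm(t):=u(\pm\infty,t)$ are well defined and continuous and solve the scalar ODE $\dot\theta=f(\theta)$; since $u$ is bounded they are global, monotone in $t$, and converge as $t\to\infty$ to zeros $\bar\alpha\le\bar\beta$ of $f$, while uniqueness for the ODE gives $\theta_-(t)<\theta_+(t)$ for all $t$. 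The key qualitative consequence, via the more precise form of Lemma \ref{valueinfty}, is that $u(x,t)\to\theta_\pm(t)$ as $x\to\pm\infty$ uniformly for $t$ in compact intervals; hence for any constant $c$ and any compact $J\subset(0,\infty)$ there is $R$ such that $u(\cdot,t)-c$ has fixed sign (or stays bounded away from $0$) on $\{|x|>R\}$ for all $t\in J$, \emph{provided} $c\notin\{\theta_\pm(t):t\in J\}$. This is precisely where the hypothesis $u_0(-\infty)\ne u_0(+\infty)$, and more broadly $u_0\in\cV$, is used: it rules out the spatial oscillations at $x=\pm\infty$ that would otherwise make zero numbers on $\R$ infinite.

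Next, fix a zero $c$ of $f$. Since $f$ is locally Lipschitz and $u$ is bounded, $v:=u-c$ solves a linear equation $v_t=v_{xx}+g(x,t)v$ with $g\in L^\infty$, and by the previous paragraph together with the standard theory of the zero number (in the form valid on $\R$ for solutions with controlled behaviour at infinity), $z_\R(v(\cdot,t))$ is finite for every $t>0$, nonincreasing, hence eventually equal to a constant $n(c)$ with all zeros simple. The usual lemma transferring this to the $\omega$--limit set then gives that every $\varphi\in\omega(u)$ satisfies: $\varphi(\cdot,t)-c$ has exactly $n(c)$ zeros, all simple, for every $t\in\R$. As an immediate payoff, $\omega(u)$ contains no nonconstant spatially periodic steady state $\psi$: in the phase plane $\psi$ encircles a centre $(c_0,0)$ with $f(c_0)=0$ and $\min\psi<c_0<\max\psi$, so $\psi-c_0$ has infinitely many simple zeros, and since simple zeros persist under $L^\infty_{loc}$--convergence this contradicts $n(c_0)<\infty$.

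The heart of the proof is showing that every $\varphi\in\omega(u)$ is a steady state. Using the first paragraph one shows first that each $\varphi\in\omega(u)$ itself has limits at $\pm\infty$, namely the zeros of $f$ accumulated by $\theta_-$ and $\theta_+$. Writing $h(x,t):=\tfrac12\varphi_x(x,t)^2+F(\varphi(x,t))$ and differentiating in $x$ using that $\varphi$ solves \eqref{eq1}, one gets $h_x=\varphi_x\varphi_t$, so $\varphi$ is a steady state as soon as $\varphi_t\equiv0$. To get this I would run a comparison--sweeping argument of the type used for front-like data: compare $u$, and then $\varphi$, with \emph{all} translates $\Phi(\cdot-\xi)$, $\xi\in\R$, of every steady state $\Phi$ of \eqref{eq1} that is a standing wave (a heteroclinic orbit of \eqref{sys}) or a ground state (a homoclinic orbit of \eqref{sys}); each $z_\R(u(\cdot,t)-\Phi(\cdot-\xi))$ is finite and eventually constant, which transfers to $z_\R(\varphi(\cdot,t)-\Phi(\cdot-\xi))$ being constant in $t$ with only simple zeros. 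If $\varphi_t\not\equiv0$, then $\varphi(\cdot,t)$ genuinely depends on $t$; since the translates of these standing waves and ground states sweep the relevant range of values between zeros of $f$, one can then produce a time $t$ and a translate $\Phi(\cdot-\xi_0)$ at which $\varphi(\cdot,t)-\Phi(\cdot-\xi_0)$ acquires a multiple zero, forcing a strict drop of a zero number just shown to be constant --- a contradiction. Hence $\varphi$ is a steady state, and by the classification of bounded orbits of the Hamiltonian system \eqref{sys} (equilibrium, closed orbit, homoclinic, heteroclinic), together with the exclusion of closed orbits above, $\varphi$ is a constant steady state, a ground state of \eqref{steadyeq}, or a standing wave of \eqref{eq1}. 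In the degenerate case $\bar\alpha=\bar\beta$ one argues separately, showing with the comparison principle that $\omega(u)$ collapses to that single constant.

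The substantial difficulty is concentrated in this last step, for two reasons. First, because we do \emph{not} assume $a\le u_0\le b$, the solution may have ``overshoot'' regions where it exceeds $u_0(+\infty)$ or drops below $u_0(-\infty)$; one must show these do not obstruct the sweeping family of steady states from pinning $\varphi$ down, which requires tracking how many zeros $u(\cdot,t)-\Phi(\cdot-\xi)$ can have and how they move as $\xi$ ranges over $\R$. Second, since $f$ is only locally Lipschitz, $u_t$ solves merely a linear parabolic equation with bounded measurable coefficient, so every appeal to the linearized equation and the zero number must be routed through the differences $u(\cdot,t+s)-u(\cdot,t)$ and $u(\cdot,t)-c$ rather than through $f'$. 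A final technical point is the behaviour of $z_\R$ at an end of $\R$ where $u-c$ decays to $0$ (the case in which $u_0(\pm\infty)$ is itself a zero of $f$), which needs a short maximum--principle argument to ensure that no zeros are created there.
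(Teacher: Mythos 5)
There is a genuine gap, and it sits exactly at what you call the ``heart of the proof.'' Your plan is to pin down an arbitrary limit profile by comparing $u$ with \emph{all translates of ground states and standing waves} and claiming that these ``sweep the relevant range of values between zeros of $f$.'' That claim is unjustified and in general false: for a generic locally Lipschitz $f$, the region of the phase plane of \eqref{sys} between zeros of $f$ is filled by non-stationary periodic orbits (cf.\ Lemma \ref{MatPolLemma}), and homoclinic/heteroclinic orbits occur only at exceptional energy levels --- possibly not at all in the value range you need (e.g.\ near a zero with $f'>0$ there are no bounded non-periodic orbits nearby). So the comparison family you propose is too sparse to force a multiple zero of $\varphi(\cdot,t)-\Phi(\cdot-\xi_0)$, and the step ``if $\varphi_t\not\equiv0$ then some translate acquires a multiple zero'' is asserted, not proved; constancy of $z_\R(\varphi(\cdot,t)-\Phi(\cdot-\xi))$ in $t$ by itself does not contradict $\varphi_t\not\equiv 0$. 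The steady states that \emph{are} available in abundance are the periodic ones, but intersection--comparison against a periodic $\psi$ only yields a contradiction if one already knows strong structural facts about $u(\cdot,t)$ --- and this is the idea your proposal is missing: the paper's proof runs everything through the reflection functions $V_\la u(x,t)=u(2\la-x,t)-u(x,t)$, whose limits at $x=\pm\infty$ equal $\pm(\theta_+(t)-\theta_-(t))\ne 0$ precisely because $u_0(-\infty)\ne u_0(+\infty)$. This gives finitely many, eventually simple zeros of $V_\la u$ uniformly in large time for every $\la$, hence $u_x(\la,t)\ne 0$ eventually (see \eqref{uxneqzero}), stabilization of the critical points of $u(\cdot,t)$ (Proposition \ref{loczero}), and symmetry of limit profiles about limit critical points (Lemma \ref{le-conv0}); only with this eventual monotonicity/symmetry in hand do the comparisons with periodic steady states (Lemma \ref{nointersection}, Lemmas \ref{lemmacase2}--\ref{lemmacase23}) produce contradictions. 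Nothing in your outline recovers this structure.

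There are secondary problems as well. Your finiteness claims for $z_\R(u(\cdot,t)-c)$ ($c$ a zero of $f$) and $z_\R(u(\cdot,t)-\Phi(\cdot-\xi))$ break down when $u_0(-\infty)$ or $u_0(+\infty)$ is itself a zero of $f$: then $u(\cdot,t)-c$ (or $u-\Phi(\cdot-\xi)$) tends to $0$ at one end for all $t$, its zeros may accumulate there, and Lemma \ref{lemzero} gives no finiteness; the hypothesis $u_0(-\infty)\ne u_0(+\infty)$ does not exclude this case, and the ``short maximum-principle argument'' you defer to is not available in this generality (the initial datum may oscillate around $c$ infinitely often near that end). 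This is exactly the difficulty the reflection functions $V_\la u$ are designed to bypass. Finally, your assertion that every $\varphi\in\omega(u)$ has limits at $\pm\infty$ equal to zeros of $f$ is not justified: $\omega(u)$ is taken in $L^\infty_{loc}(\R)$, so a priori limit profiles need not have limits at infinity --- in the paper this is a consequence of the classification, not an ingredient. As it stands, the proposal reuses correct standard tools (zero number, Lemma \ref{valueinfty}, the phase-plane classification) but does not contain a workable substitute for the reflection/stabilization argument on which the paper's proof of Theorem \ref{mainthm} rests.
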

\begin{figure}[ht]\vspace{0.5cm} 
         \addtolength{\belowcaptionskip}{10pt}
         \addtolength{\abovecaptionskip}{-2.5cm}  
(a)   

\vspace{-5.5cm} 
\hspace{1cm}\includegraphics[scale=.45]{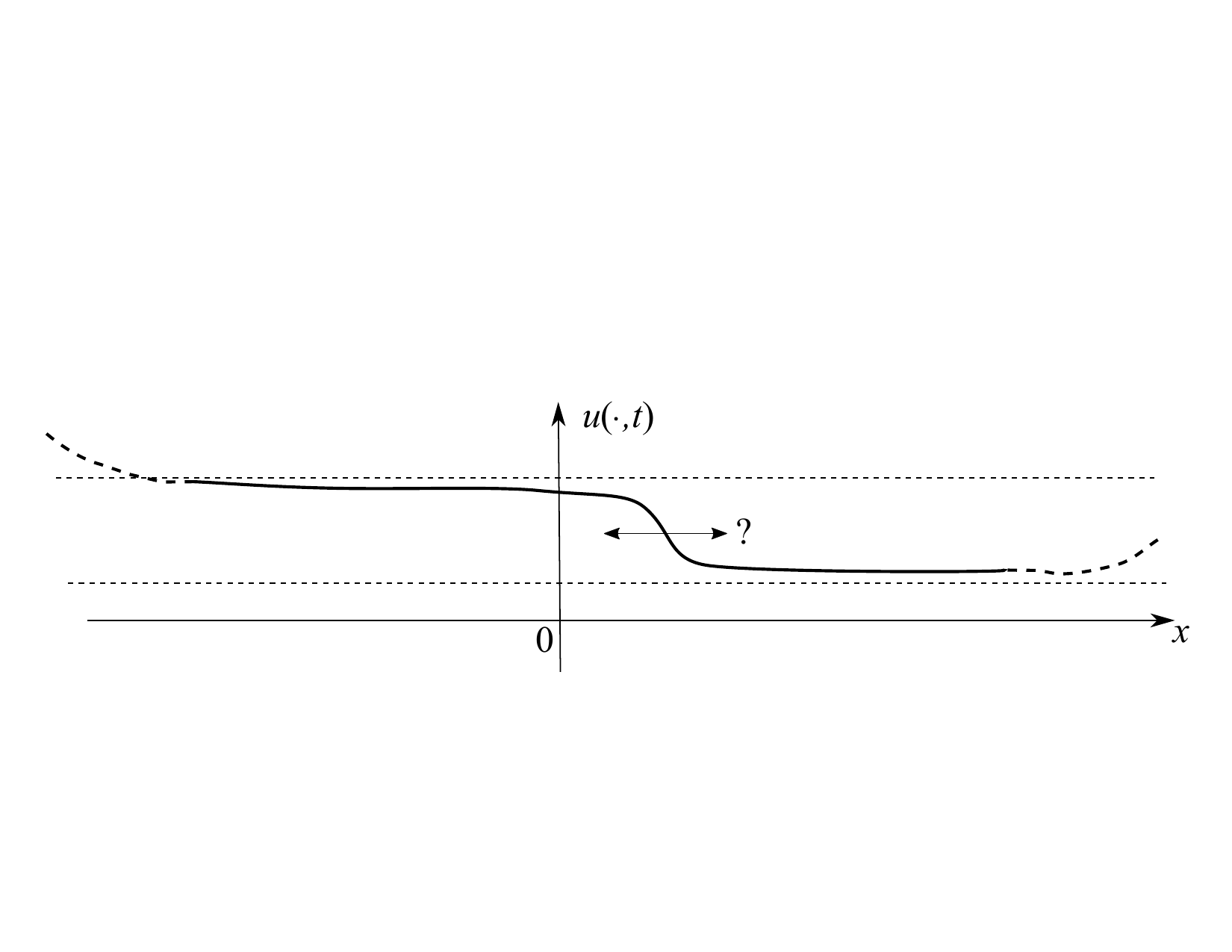} 
\vspace{-1.5cm} 

(b)

\vspace{-5.5cm} 
\hspace{1cm}\includegraphics[scale=.45]{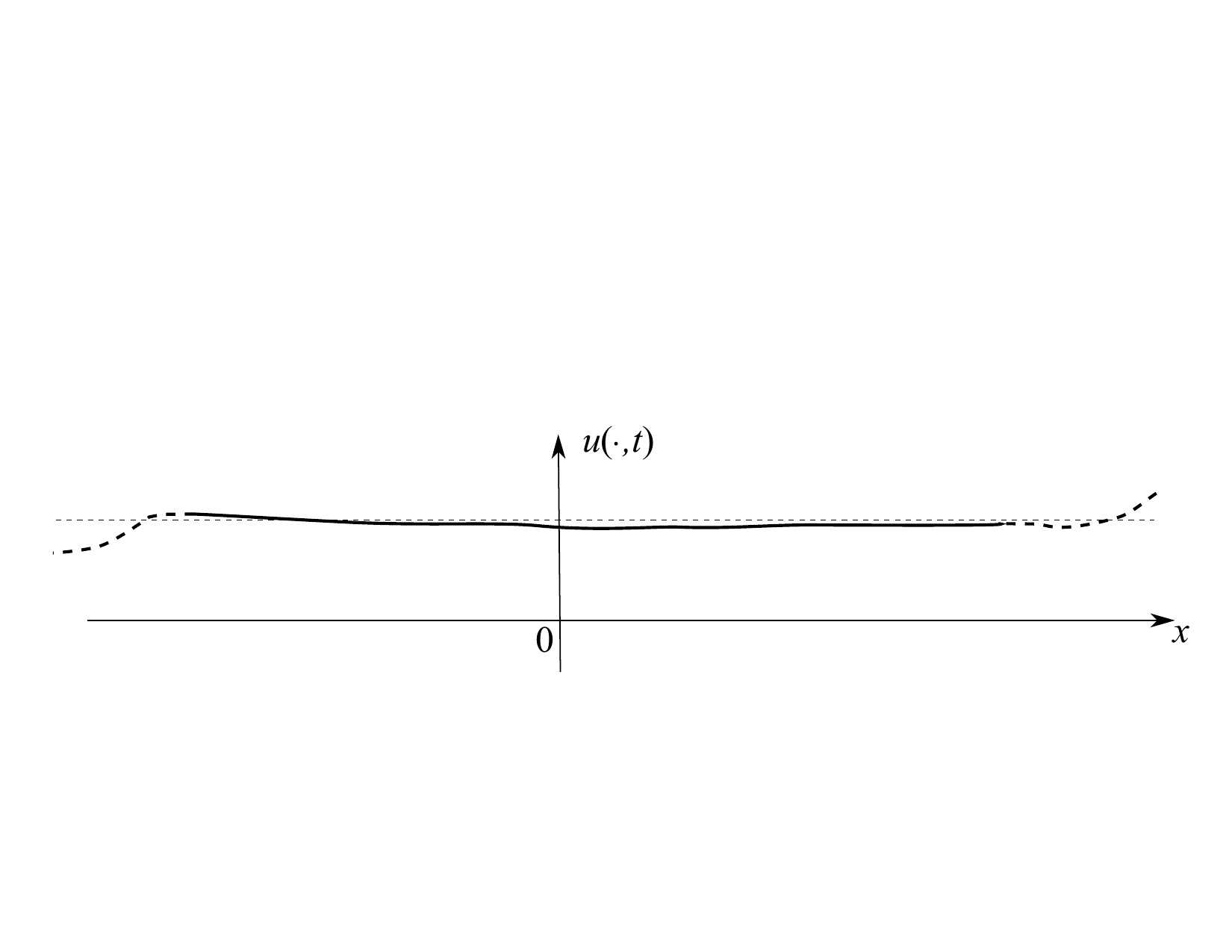} 
\vspace{-1.5cm} 

(c)

\vspace{-5.5cm} 
\hspace{1cm}\includegraphics[scale=.45]{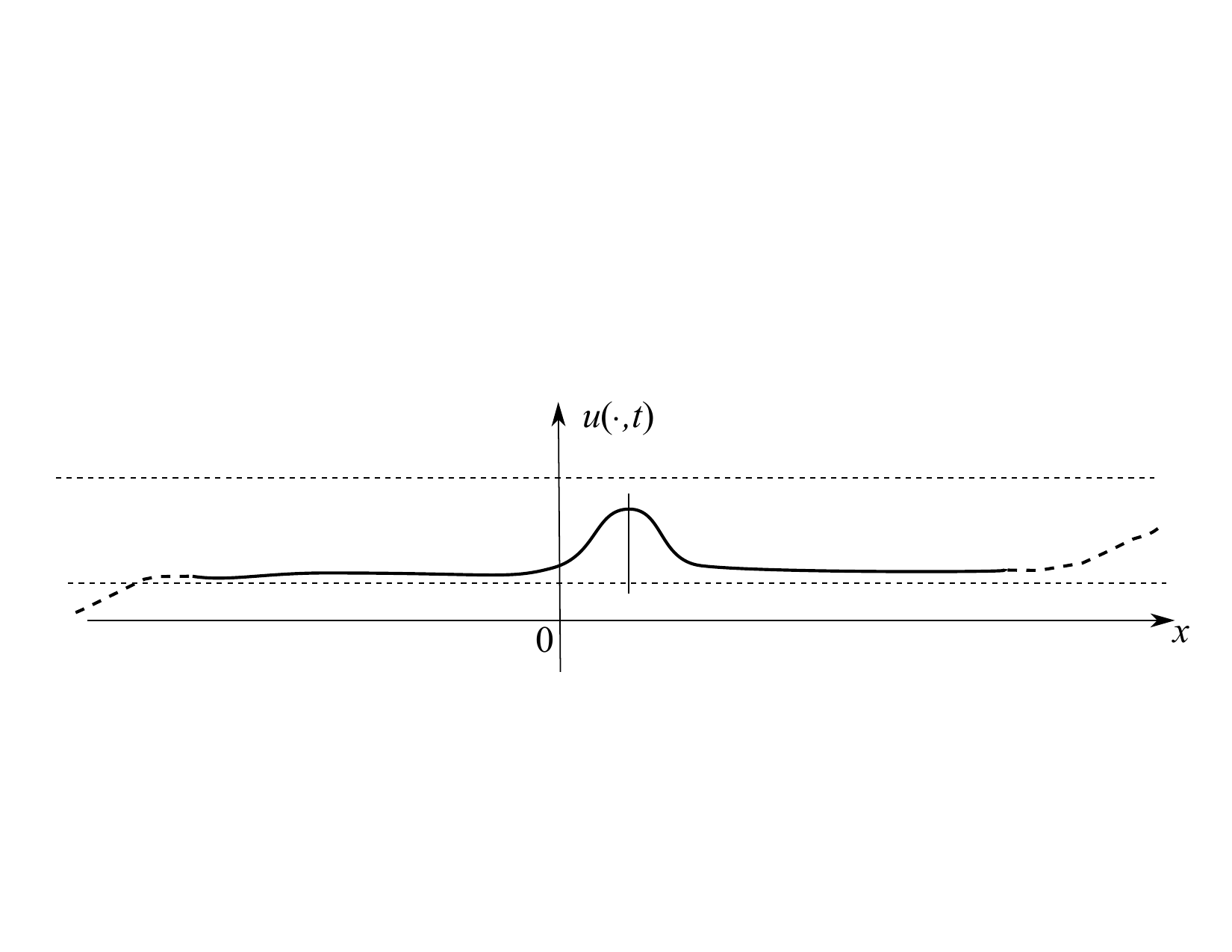}  
         \caption[Shape of $u$]{The large-time shape of $u(\cdot,t)$
           in bounded intervals. 
          If (M) holds, the shape may resemble a standing
          wave whose position may possibly shift slowly in either direction
          as time increases (Figure
           (a)). If (M) does not hold, the solution is convergent, its
           locally uniform limit being a constant (Figure (b)) or a
           ground state (Figure (c)).    
            \label{fig1}} 
         \end{figure}

\begin{Remark}
  \label{rmtothm}
{\rm 
  \begin{itemize}
  \item[(i)] Theorem \ref{mainthm} in particular shows that nonconstant
  periodic steady states are never elements of $\om(u_0)$ for $u_0\in
  \cV$. 
\item[(ii)] An even more precise description of  $\omega(u_0)$ will
    come out of the proof of the theorem. Namely, consider the
    following possibility:
    \begin{itemize}
    \item[(M)] $u$ is eventually
    monotone in space: given any $k\in \R$, one has $u_x(x,t)\ne 0$ for
    all $x\in (-k,k)$ if $t$ is sufficiently large. 
    \end{itemize}
If (M) holds, we will prove that $u$ is quasiconvergent and each
element of $\om(u_0)$ is a 
  constant steady state or a 
  standing wave of  (\ref{eq1}) (cp. Fig.~\ref{fig1}(a)).
  If (M) does not hold, we show
that $u$ is even convergent, with
$\om(u_0)=\{\varphi\}$, where $\vp$ is a 
constant steady state (Fig.~\ref{fig1}(b))  or a ground state
of {\eqref{steadyeq}} (Fig.~\ref{fig1}(c)).
Here, as always in this paper, the convergence  is with respect to the
topology of $L^\infty_{loc}(\R)$, that is, the locally uniform
convergence. The uniform convergence is not to be expected in general.
For example, if the limits of $u_0(x)$
at $x=-\infty$ and $x=\infty$ are two (distinct) zeros of $f$,
then the same is true, with the same limits, for
$u(x,t,u_0)$ at any positive time $t$. In this case, obviously,
$u(\cdot,t,u_0)$  cannot uniformly converge to a constant or a ground
state. 
\item[(iii)] Since the locally uniform convergence is used
  in the definition of  $\om(u_0)$,  Theorem \ref{mainthm}
  (and Figure \ref{fig1}) only give an information on the
  large-time shape of the solution $u(\cdot,t,u_0)$ restricted to
  (arbitrarily large) bounded spatial intervals. A different notion of the
  limit set would be more appropriate if one wanted to describe
  the global large-time shape of the solution, namely, 
  \begin{align*}
    \Omega(u_0) := \{ \vp\in C_b(\R):~&u(\cdot+x_n,t_n,u_0)\to\vp\\
      &\textrm{for some sequences }t_n\to\infty,\ ~ x_n\in\R\},
  \end{align*}
  where the convergence is in $L^\infty_{loc}(\R)$ again.
  Both $\om(u_0)$ and  $\Om(u_0)$ give relevant and interesting
  information on the large-time behavior the solution
  of $u(\codt,t,u_0)$. In this paper, we focus on the
  behavior in local spatial regions, which is a  specific
information captured by $\om(u_0)$ and not ``encoded'' in $\Om(u_0)$.
Thus, $\om(u_0)$ is the main object of our present study.
We remark, however, that the larger set $\Om(u_0)$   ``seldom''
consists of steady states: typically,
  one should expect traveling fronts of
  \eqref{eq1} to occur in $\Om(u_0)$ as well (see \cite{P:prop-terr}
  for a description of $\Om(u_0)$ in the case of front-like initial data
  $u_0$). 
\item[(iv)]  Clearly, the set of all functions $u_0\in \cV$ satisfying
$u_0(-\infty)\ne u_0(+\infty)$ is open and dense in $\cV$ 
(with the supremum norm). Thus, Theorem \ref{mainthm} has an
interesting additional feature in that it shows that 
quasiconvergence is generic in $\cV$: 
the solution of \eqref{eq1} is quasiconvergent, if bounded, for an
open and dense set of initial data in $\cV$. 
In contrast, using the constructions from \cite{P:examples},
 one can show that this genericity statement is not valid if one replaces
 $\cV$ with $C_b(\R)$ or $C_0(\R)$. 
\end{itemize}}
\end{Remark}

Theorem \ref{mainthm} is proved in Section \ref{proof}. Several
preliminary results concerning steady states, zero number, 
and $\om$-limit sets that are needed for 
the proof are recalled in Section \ref{prelims}. 

As Theorem \ref{mainthm} concerns bounded solutions only, 
modifying $f$ outside the range of the solution,
we may assume without loss of generality that $f$ satisfies the
following condition: 
\begin{equation}\label{coercivity}
 \text{there exists $\kappa>0$  such that for all $|u|>\kappa$
 one has $f(u)=\frac{u}{2}$.}
\end{equation}
This will be convenient in the next section. 

\section{Preliminaries}\label{prelims}

\subsection{Steady states and their trajectories in the phase plane}\label{stst}
In this subsection, we recall several technical 
results concerning the steady
states of (\ref{eq1}), or, solutions of (\ref{steadyeq}). The
first-order system \eqref{sys} corresponding to (\ref{steadyeq})
is Hamiltonian with respect to the energy
\begin{equation}\label{energy}
 H(u,v)=\frac{v^2}{2}+F(u),
\end{equation}
where $\displaystyle F(u)=\int_0^u f(s)ds.$ 
Thus, each orbit of (\ref{sys}) is contained in a level set of $H.$ 
The level sets  are symmetric with respect to the $v-$axis, and
our extra hypothesis (\ref{coercivity})  implies that they are all
bounded. Therefore, all orbits of  (\ref{sys}) are bounded 
and, as already mentioned in the introduction, there are only
four types of them: 
equilibria (all of which are on the $u-$axis), non-stationary periodic
orbits, homoclinic orbits (corresponding to ground states 
of (\ref{steadyeq})),
and heteroclinic orbits (corresponding to standing waves of (\ref{eq1})). 

Each non-stationary periodic orbit $\MO$ is symmetric about the $u-$axis and for some $p<q$ one has 
\begin{align}
 \MO\cap\{ (u,0):u\in\R\} & = \left\{(p,0),(q,0)\right\} \nonumber \\
 \MO\cap \left\{(u,v):v>0\right\} & = \left\{\lp u,\sqrt{2(F(p)-F(u))}\rp:u\in(p,q)\right\}. \label{periodicorbits}
\end{align}

The following result  of \cite{p-Ma:1d} gives a
 description of the phase plane portraits of (\ref{sys}) with all the
 periodic orbits removed.  Let 
\begin{align}
 \ME & := \{ (a,0):f(a)=0\} \textrm{ (the set of all equilibria of (\ref{sys}))}, \nonumber \\
 \MP_0 & :=\{(a,b)\in\R^2: (a,b)\textrm{ lies on a non-stationary periodic orbit of (\ref{sys})}\} \nonumber \\
 \MP & := \MP_0\cup\ME. \nonumber
\end{align}

\begin{lem}\label{MatPolLemma}
 \emph{\cite[Lemma 3.1]{p-Ma:1d}} The following two statements are valid.
 \begin{enumerate}
  \item[(i)] Let $\Sigma$ be a connected component of $\R^2\setminus\MP_0.$ Then $\Sigma$ is a compact set contained in a level set of the Hamiltonian $H$ and one has 
  \begin{equation*}
   \Sigma = \left\{(u,v)\in\R^2:u\in J,\ v=\pm\sqrt{2(c-F(u))}\right\}
  \end{equation*}
where $c$ is the value of $H$ on $\Sigma$ and $J=[p,q]$ for some $p,q\in\R$ with $p\leq q.$  Moreover, if $(u,0)\in\Sigma$ and $p<u<q,$ then $(u,0)$ is an 
equilibrium. The point $(p,0)$ is an equilibrium or it lies on a homoclinic orbit; the same is true for the point $(q,0).$
\item[(ii)] Each connected component of the set $\R^2\setminus\MP$ consists of a single orbit of \emph{(\ref{sys})}, either a homoclinic orbit or a heteroclinic orbit.
 \end{enumerate}
 \end{lem}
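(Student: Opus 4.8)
To prove Lemma~\ref{MatPolLemma} I would lean on two consequences of the coercivity hypothesis \eqref{coercivity}: the Hamiltonian $H$ is proper, so every orbit of \eqref{sys} is bounded and hence of one of the four listed types; and the equilibria of \eqref{sys} are precisely the zeros of $\nabla H(u,v)=(f(u),v)$. The first task is the topological backbone. I would show $\MP_0$ is open: near a point of a nonstationary periodic orbit $\MO$ one has $\nabla H\ne0$, and $\MO$ is a compact regular level curve of $H$ with no equilibrium in a neighborhood, so by the implicit function theorem the level set at each nearby energy also contains a simple closed curve close to $\MO$, which --- being a compact orbit without equilibria --- is again a periodic orbit. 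I would also show $\R^2\setminus\MP_0$ is compact: setting $c_0:=\max\{F(a):f(a)=0\}$ (finite, since by \eqref{coercivity} all zeros of $f$ lie in $[-\kappa,\kappa]$), for every $c>c_0$ the set $\{F\le c\}$ is a single compact interval with $f\ne0$ at both endpoints, so $H^{-1}(c)$ is one simple closed curve without equilibria, i.e.\ a periodic orbit; hence $\R^2\setminus\MP_0\subseteq\{H\le c_0\}$, and every component $\Sigma$ of $\R^2\setminus\MP_0$ is compact. Finally I would show $H$ is constant on $\Sigma$: otherwise choose $z_1,z_2\in\Sigma$ with $H(z_1)<H(z_2)$; since $F\in C^1$ with $F'=f$, the critical value set $F(\{f=0\})$ has measure zero (one-dimensional Sard), so there is a regular value $c$ of $H$ in $(H(z_1),H(z_2))$, whence $H^{-1}(c)$ is a finite union of periodic orbits and so lies in $\MP_0$, while connectedness of $\Sigma$ forces $\Sigma\cap H^{-1}(c)\ne\emptyset$ --- a contradiction.

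For part (i) I would then identify $\Sigma$. Writing $\pi$ for the projection to the $u$-axis, $\pi(\Sigma)$ is a connected subset of $\{F\le c\}$, hence contained in a single connected component $I=[p,q]$; note $F(p)=F(q)=c$ and $F\le c$ on $I$. Put $S_I:=\{(u,v):u\in I,\ v=\pm\sqrt{2(c-F(u))}\}=H^{-1}(c)\cap\pi^{-1}(I)$, so $\Sigma\subseteq S_I$, and observe $S_I$ is connected because its two branches meet at $(p,0)$ and $(q,0)$. The crux is a dichotomy: \emph{$S_I$ is either a single nonstationary periodic orbit, or it is entirely disjoint from $\MP_0$}. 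Indeed, if some point of $S_I$ lay on a periodic orbit $\MO$, then $\MO\subseteq H^{-1}(c)$ and, by \eqref{periodicorbits}, $\MO=S_{I'}$ for some component $I'$ of $\{F\le c\}$; sharing a point with $S_I$ forces $I'=I$, so $\MO=S_I$. Since $\Sigma\subseteq S_I$ and $\Sigma$ avoids $\MP_0$, the first alternative is impossible, so $S_I\cap\MP_0=\emptyset$; being a connected subset of $\R^2\setminus\MP_0$ that contains the component $\Sigma$, $S_I$ must equal $\Sigma$. This is the displayed formula, with $J=I=[p,q]$. The rest follows: if $p<u<q$ and $(u,0)\in\Sigma$ then $F(u)=c$ is an interior maximum of $F|_{[p,q]}$, so $f(u)=0$ and $(u,0)$ is an equilibrium; and $(p,0)\in\Sigma$ is an equilibrium unless $f(p)\ne0$, in which case the orbit through $(p,0)$ is non-periodic and turns around there, so by the reflection symmetry $(u,v,x)\mapsto(u,-v,-x)$ of \eqref{sys} its $\alpha$- and $\omega$-limits coincide and it is a homoclinic orbit; similarly for $(q,0)$.

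For part (ii), let $\Sigma'$ be a component of $\R^2\setminus\MP$. Since $\R^2\setminus\MP\subseteq\R^2\setminus\MP_0$, $\Sigma'$ lies in a single $\Sigma=S_I$ from part (i), hence in $S_I\setminus\ME$. At any point $w$ of $S_I$ which is not an equilibrium, $\nabla H(w)\ne0$, so $H^{-1}(c)$ is locally a single arc through $w$; since $H$ is constant along orbits this arc is part of the orbit through $w$, so $S_I$ coincides near $w$ with that orbit. Hence every homoclinic or heteroclinic orbit contained in $S_I$ is open in $S_I\setminus\ME$, and it is also closed there (its closure in $\R^2$ adds only equilibria); so the connected components of $S_I\setminus\ME$ are precisely the orbits contained in it --- every point of $S_I\setminus\ME$ lies on such an orbit by the orbit classification. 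Therefore $\Sigma'$ lies in one such orbit $\gamma$, and conversely $\gamma$, being the orbit through a point of $\Sigma'$ and free of equilibria, is a connected subset of $\R^2\setminus\MP$, so $\gamma\subseteq\Sigma'$. Thus $\Sigma'=\gamma$, a homoclinic or heteroclinic orbit.

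The conceptual engine is the dichotomy in part (i) --- that an ``arch'' $S_I$ over a component of $\{F\le c\}$ is either a single periodic orbit or misses $\MP_0$ altogether --- after which everything slots together. The steps I expect to need the most technical care are the local structure statements: the openness of $\MP_0$ and the compactness of $\R^2\setminus\MP_0$ in the first paragraph (where \eqref{coercivity} must be used to control the level sets of $H$ at high energy and at energies avoiding $F(\{f=0\})$), and the flow-box argument in part (ii) showing that the orbits are open in $S_I\setminus\ME$.
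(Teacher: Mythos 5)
Your argument is correct, but note that the paper itself does not prove this lemma at all: it is quoted verbatim from \cite[Lemma 3.1]{p-Ma:1d}, so there is no in-paper proof to compare against, and what you have produced is a self-contained reconstruction of the phase-plane argument that the citation stands in for. The skeleton is sound: coercivity \eqref{coercivity} makes $H$ proper, so all orbits are bounded and of the four standard types; $\MP_0$ is open by the regular-level-curve (period annulus) argument; $\R^2\setminus\MP_0\subseteq\{H\le c_0\}$ because for $c>c_0:=\max F(\{f=0\})$ the set $\{F\le c\}$ is one interval with noncritical endpoints, making $H^{-1}(c)$ a single periodic orbit; constancy of $H$ on a component $\Sigma$ via a regular value between two putative energies (one-dimensional Sard for the $C^1$ function $F$ suffices, since $f$ is only Lipschitz); and the dichotomy that the arch $S_I$ over a component $I$ of $\{F\le c\}$ is either one periodic orbit or disjoint from $\MP_0$, which forces $\Sigma=S_I$. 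Two small steps deserve to be made explicit, though both follow from ingredients you already have: in the dichotomy, that the interval $[p',q']$ cut out by a periodic orbit $\MO\subseteq H^{-1}(c)$ is a \emph{full} component of $\{F\le c\}$ (this uses $f(p'),f(q')\ne0$, so $F>c$ immediately outside $[p',q']$); and in part (ii), that the orbit through a point of $S_I\setminus\ME$ stays in $S_I$ --- cleanest by observing that $S_I=H^{-1}(c)\cap\pi^{-1}(I)$ is a connected component of $H^{-1}(c)$ (any connected superset in $H^{-1}(c)$ projects onto a connected subset of $\{F\le c\}$ containing the component $I$), or equivalently by running your flow-box observation along the orbit as a clopen argument. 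The symmetry argument $(u,v,x)\mapsto(u,-v,-x)$ ruling out heteroclinics through $(p,0)$, $(q,0)$ is exactly right.
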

The following lemma is a simple consequence 
of the continuity of the solutions of  (\ref{sys}) 
with respect to the initial conditions. 
\begin{lem}\label{lemperiods}
 Let $\MO_n$, $n=1,2,\dots$ be a sequence of non-stationary periodic
 orbits of \emph{(\ref{sys})} with the  minimal periods $\rho_n,$
 $n=1,2,\dots$, respectively.  Suppose that for some bounded set 
$K\subset \R^2\setminus\MP$ one has 
 $\displaystyle\textrm{dist}\lp\MO_n,K\rp\underset{n\to\infty}{\longrightarrow}0.$ Then $\displaystyle \rho_n\underset{n\to\infty}{\longrightarrow}\infty.$
\end{lem}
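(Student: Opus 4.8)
The plan is to argue by contradiction using only compactness and continuous dependence on initial data; there is nothing deep here once the right limit is taken. Suppose the conclusion fails. Passing to a subsequence we may assume $\rho_n\to\rho$ for some $\rho\in[0,\infty)$. Since $\dist(\MO_n,K)\to0$, pick $z_n\in\MO_n$ and $w_n\in K$ with $|z_n-w_n|\to0$; as $K$ is bounded, a further subsequence gives $w_n\to w$, and then $z_n\to w$ as well. The key point — and the only place the hypothesis on $K$ is used — is that $w$ lies in $\R^2\setminus\MP$ itself, not merely in its closure: this holds because (as is understood in the statement, and is the case in the applications) $K$ is closed, so $w\in K\subset\R^2\setminus\MP$. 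By Lemma \ref{MatPolLemma}(ii) the point $w$ then lies on a homoclinic or a heteroclinic orbit of \eqref{sys}; in particular $w$ is neither an equilibrium nor a point of any periodic orbit.

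Next I would construct a limit orbit. By \eqref{coercivity} the sublevel sets of the Hamiltonian $H$ of \eqref{energy} are bounded, and $H$ is constant on $\MO_n$ with value $H(z_n)\to H(w)$, so all $\MO_n$ (for $n$ large) lie in one fixed bounded set $R$; the vector field $(v,-f(u))$ of \eqref{sys} is bounded on $R$, say by $C$. Let $\zeta_n$ solve \eqref{sys} with $\zeta_n(0)=z_n$; it is $\rho_n$-periodic with orbit $\MO_n$, and $|\zeta_n'|\le C$. By Arzel\`a--Ascoli together with continuous dependence on initial conditions (here local Lipschitz continuity of $f$ enters) a subsequence of $\{\zeta_n\}$ converges locally uniformly on $\R$ to a solution $\zeta$ of \eqref{sys} with $\zeta(0)=w$.

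It then remains to identify $\zeta$ and reach the contradiction. If $\rho>0$, letting $n\to\infty$ in $\zeta_n(\,\cdot+\rho_n)=\zeta_n(\,\cdot\,)$ gives $\zeta(\,\cdot+\rho)=\zeta(\,\cdot\,)$, so the orbit $\zeta(\R)$ is a non-stationary periodic orbit, or, if $\zeta$ is constant, an equilibrium. If $\rho=0$, then $\mathrm{diam}(\MO_n)\le C\rho_n\to0$ and $\zeta_n(x)\in\MO_n$ for every $x$, whence $\zeta\equiv w$ and $w$ is an equilibrium. Either way $w$ is an equilibrium or a point on a periodic orbit, contradicting the first paragraph. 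Hence $\rho_n\to\infty$.

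I do not expect a genuine obstacle here: the statement is ``soft'' and follows from the standard compactness package for \eqref{sys}. The one thing that must not be glossed over is that the limit point $w$ lands in $\R^2\setminus\MP$ — i.e.\ on a homoclinic or heteroclinic orbit — which is precisely what excludes the limiting periodic orbit or equilibrium; this rests on the closedness of $K$. Indeed, without it the statement would fail: for a nonlinearity whose potential $F$ has arbitrarily small nested wells accumulating at a degenerate zero $a$ of $f$, one obtains tiny periodic orbits of bounded (even fixed) minimal period collapsing onto $(a,0)$ while staying close to the heteroclinic orbits running between the neighbouring zeros of $f$, so $\dist(\MO_n,K)\to0$ would hold for a merely bounded $K\subset\R^2\setminus\MP$ with $\rho_n$ bounded.
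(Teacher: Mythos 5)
Your argument is correct and is essentially the proof the paper has in mind: the paper gives no proof of Lemma \ref{lemperiods} beyond the remark that it is a simple consequence of continuous dependence on initial data, and your compactness/continuous-dependence argument (a limit point $w\notin\MP$, the limiting solution through $w$ being forced to be periodic or an equilibrium, whether $\rho>0$ or $\rho=0$) is the standard way of making that remark precise. Your caveat about $K$ concerns the reading of the statement rather than the argument: if $\dist\lp\MO_n,K\rp$ is understood as $\sup_{w\in K}\dist(w,\MO_n)$, the same proof works for any nonempty bounded $K$ by fixing a single $w\in K$, with no closedness needed; under the inf--inf reading, closedness of $K$ is indeed required, and your nested-wells example correctly shows why.
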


If $\vp$ is a $C^1$ bounded function on $\R,$ we let
\begin{equation*}
 \tau(\vp):=\left\{ \lp \vp(x),\vp_x(x)\rp:x\in\R\right\} 
\end{equation*}
and refer to this set as 
the \textit{spatial trajectory (or orbit)} of $\vp.$ 
If $\vp$ is a solution of (\ref{steadyeq}), then $\tau(\vp)$ is the usual  
orbit of the solution $(\varphi,\varphi_x)$ of (\ref{sys}).

\subsection{Invariance of the $\om$-limit set}\label{invariance}
Recall that the $\omega-$limit set of a bounded  solution $u$ of
\eqref{eq1}, denoted by  
$\omega(u)$, or $\omega(u_0)$  if the initial value of $u$ is given,
is defined as in \eqref{defomega}, with the convergence in 
$L^\infty_{loc}(\R)$.  By standard parabolic estimates the trajectory
$\{ u(\cdot,t),\ t\geq1\}$  of $u$  
is relatively compact in $L^\infty_{loc}(\R).$ This implies that
$\omega(u)$ is nonempty, compact, and connected in (the metric space) 
$L^\infty_{loc}(\R)$
and it attracts the solution in the following sense:
\begin{equation}
  \label{eq:11}
  \textrm{dist}_{L^\infty_{loc}(\R)}\lp u(\cdot,t),\omega(u)\rp\underset{t\to\infty}{\longrightarrow}0.
\end{equation}
It is also a standard observation that if $\vp\in\omega(u),$ there exists 
an entire solution $U(x,t)$ of (\ref{eq1})
 such that 
\begin{equation}\label{entiresol}
 U(\cdot,0)=\vp,\qquad U(\cdot,t)\in\omega(u)\quad (t\in\R).
\end{equation}
Here, \textit{an entire solution} of (\ref{eq1}) refers to a
solution defined for all $x\in\R$, $t\in\R.$
Let us briefly recall how such an entire solution $U$ is found.  
By  parabolic regularity estimates,  $u_t,u_x,u_{xx}$
are bounded on $\R\times[1,\infty)$ and are globally $\a-$H\"older for
any $\a\in(0,1).$ 
If $u(\cdot,t_n)\underset{n\to\infty}{\longrightarrow}\vp$ in
$L^\infty_{loc}(\R)$  for 
some $t_n\to\infty,$ we consider the sequence $u_n(x,t):=u(x,t+t_n)$,
$n=1,2\dots$.   
Passing to a subsequence if necessary, 
we have  $u_n\to U$  in $C^1_{loc}(\R^2)$ 
for some function $U$; this function $U$ is  then 
easily shown to be an entire solution of (\ref{eq1}).
By definition, $U$ satisfies \eqref{entiresol}. Note that 
the entire solution $U$ is determined uniquely by $\varphi$; this
follows from the uniqueness and backward uniqueness for the Cauchy
problem  \eqref{eq1}, \eqref{ic1}. 

Using similar  compactness arguments, one shows easily that
 $\omega(u)$ is connected in $C_{loc}^1(\R).$ 
Hence, the set 
$$
\left\{ (\vp(x),\vp_x(x)):\vp\in\omega(u),x\in\R\right\} = \underset{\vp\in\omega(u)}{\cup}\tau(\vp)
$$
is connected in $\R^2.$ Also, obviously, 
$\tau(\vp)$ is  connected in $\R^2$ for all $\vp\in\omega(u).$

We will also use the following result 
(see  \cite[Lemma 4.3]{p-Ma:1d}  or
 \cite[Lemma 6.10]{P:prop-terr} for a proof).
\begin{lem}\label{translatedorbit}
Let $u$ be a bounded solution of \eqref{eq1}. If
 $\vp\in\omega(u),$ $\psi$ is a solution of (\ref{steadyeq}),
and  $\tau(\vp)\subset\tau(\psi),$ then $\vp$ is a shift of $\psi.$
\end{lem}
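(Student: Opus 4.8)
The plan is to prove Lemma~\ref{translatedorbit} by exploiting the zero-number (lap-number) machinery together with the structure of spatial trajectories in the phase plane. Since $\tau(\vp)\subset\tau(\psi)$ and both $\vp$ and $\psi$ are $C^1$ functions solving the same second-order autonomous ODE (for $\psi$ this is given; for $\vp$ it must first be checked), the idea is that $\vp$ and $\psi$ trace the same orbit in the $(u,u_x)$ phase plane, and any two solutions of \eqref{steadyeq} whose phase-plane orbits coincide must differ by a spatial shift, because the flow of \eqref{sys} is determined by its value at a single point.

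First I would argue that $\vp$ is itself a solution of the steady-state equation \eqref{steadyeq}. This is where the hypothesis $\vp\in\om(u)$ enters: by the invariance discussion in Section~\ref{invariance}, there is an entire solution $U$ of \eqref{eq1} with $U(\cdot,0)=\vp$ and $U(\cdot,t)\in\om(u)$ for all $t$. The inclusion $\tau(\vp)\subset\tau(\psi)$ means every point $(\vp(x),\vp_x(x))$ lies on the single orbit $\tau(\psi)$ of the Hamiltonian system \eqref{sys}; along that orbit the Hamiltonian $H$ from \eqref{energy} is constant, say equal to $c$. Hence $H(\vp(x),\vp_x(x))=c$ for all $x$, i.e. $\vp_x^2/2+F(\vp)\equiv c$. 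Differentiating in $x$ gives $\vp_x(\vp_{xx}+f(\vp))=0$, so $\vp_{xx}+f(\vp)=0$ at every point where $\vp_x\ne 0$; at points where $\vp_x=0$ one either has an isolated zero of $\vp_x$ (and the identity extends by continuity) or $\vp_x\equiv 0$ on an interval, in which case $\vp$ is locally constant and $(\vp,0)$ lies on $\tau(\psi)$ forces $f(\vp)=0$ there as well, since $\psi$ passes through that equilibrium. Either way $\vp$ solves \eqref{steadyeq} on all of $\R$.

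Now that both $\vp$ and $\psi$ are solutions of \eqref{steadyeq} with $\tau(\vp)\subset\tau(\psi)$, I would finish as follows. Pick any $x_0\in\R$; then $(\vp(x_0),\vp_x(x_0))\in\tau(\psi)$, so there is $y_0$ with $(\psi(y_0),\psi_x(y_0))=(\vp(x_0),\vp_x(x_0))$. The functions $x\mapsto(\vp(x),\vp_x(x))$ and $x\mapsto(\psi(x+y_0-x_0),\psi_x(x+y_0-x_0))$ are two solutions of the first-order system \eqref{sys} agreeing at $x=x_0$; by uniqueness for the ODE \eqref{sys} (which holds since $f$ is locally Lipschitz) they coincide for all $x$, hence $\vp(x)=\psi(x+y_0-x_0)$, i.e. $\vp$ is a shift of $\psi$. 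The main obstacle, and the only step requiring care, is the first one: establishing that $\vp$ genuinely satisfies \eqref{steadyeq} rather than merely having its trajectory inside $\tau(\psi)$ — in particular handling the points where $\vp_x$ vanishes, where the naive division by $\vp_x$ fails, and ruling out pathologies such as $\vp$ reversing direction along the orbit in a way that is not itself an ODE solution. One clean way around this is to observe that $\tau(\psi)$, being a single orbit of \eqref{sys}, is either an equilibrium, a periodic orbit, a homoclinic orbit, or a heteroclinic orbit; in each case the orbit is a one-dimensional embedded curve (or a point) on which the vector field $(v,-f(u))$ is nonvanishing except at equilibria, so a continuous curve $x\mapsto(\vp(x),\vp_x(x))$ constrained to lie on it and having the correct first coordinate-derivative relation $\tfrac{d}{dx}\vp = \vp_x$ is forced to be a reparametrization-free solution of the flow. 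Once $\vp$ is known to solve \eqref{steadyeq}, the shift conclusion is immediate from ODE uniqueness.
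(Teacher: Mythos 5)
There is a genuine gap, and it sits exactly at the step you yourself flag as the delicate one: the claim that $\tau(\vp)\subset\tau(\psi)$ forces $\vp$ to solve \eqref{steadyeq}. Your treatment of the set where $\vp_x$ vanishes is incorrect. If $\vp\equiv M$ on an interval (or on all of $\R$), the fact that $(M,0)\in\tau(\psi)$ does \emph{not} imply $f(M)=0$: the spatial trajectory of a nonconstant periodic solution or of a ground state crosses the $u$-axis at its turning points, where $\psi'=0$ but $f(\psi)\neq 0$ (at the maximum of such a $\psi$ one has $f(\max\psi)=-\psi''>0$; cf.\ \eqref{periodicorbits}), so the phrase ``since $\psi$ passes through that equilibrium'' begs the question. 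Concretely, take $\psi$ nonconstant periodic with $\max\psi=M$ and $f(M)\neq0$; the constant function $\vp\equiv M$ is smooth, satisfies $\tau(\vp)=\{(M,0)\}\subset\tau(\psi)$, yet solves neither \eqref{steadyeq} nor is a shift of $\psi$. This shows the implication you are trying to prove is simply false for general smooth $\vp$; the hypothesis $\vp\in\om(u)$ must be used in an essential way, but your argument never actually uses it---you introduce the entire solution $U$ and then make no use of it, so nothing in the proof distinguishes $\om$-limit functions from arbitrary $C^2$ functions. The closing ``clean way around'' does not repair this: the relation $\frac{d}{dx}\vp=\vp_x$ is a tautology (the second component of the curve is by definition the derivative of the first), and the constant curve parked at a turning point satisfies it while failing to follow the flow of \eqref{sys}; an embedded orbit does not force a curve constrained to lie on it to move.

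Parts of your computation are salvageable: differentiating $H(\vp,\vp_x)\equiv c$ gives \eqref{steadyeq} on the closure of $\{\vp_x\neq0\}$, and a $C^2$ matching argument at an endpoint $b$ of a maximal interval on which $\vp\equiv M$ (where $\vp_{xx}(b)=0$ from inside the interval but $\vp_{xx}(b)=-f(M)\neq0$ from the other side) excludes $\vp$ being constant on a proper subinterval. What remains---and is the real content of the lemma---is excluding that $\vp$ is a \emph{globally} constant non-equilibrium value lying at a turning point of $\tau(\psi)$; such constants can a priori belong to $\om(u)$ for a bounded solution, so this requires the parabolic structure. The paper does not reprove the lemma but cites \cite{p-Ma:1d} and \cite{P:prop-terr}, where the argument goes through the entire solution $U$ with $U(\cdot,0)=\vp$ and zero-number tools of the type in Lemmas \ref{lemzero} and \ref{robustnesszero}: one matches $\vp$ to first order at a point by a shift $\tilde\psi$ of $\psi$, so that $U(\cdot,0)-\tilde\psi$ has a multiple zero, and then either $U\equiv\tilde\psi$ (giving the conclusion) or the multiple zero propagates back to $u(\cdot,t)-\tilde\psi$ at arbitrarily large times, contradicting the eventual simplicity of its zeros. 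Your final step (ODE uniqueness yields the shift once $\vp$ is known to solve \eqref{steadyeq}) is fine, but the proof as proposed is incomplete without the parabolic argument above.
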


\subsection{Zero number for linear parabolic equations}
In this subsection, we consider solutions of a linear parabolic equation 
\begin{equation}\label{eqlin}
 v_t=v_{xx}+c(x,t)v,\qquad x\in\R,\ t\in\lp s,T\rp,
\end{equation}
where $-\infty\leq s<T\leq \infty$ and $c$ is a bounded measurable
function. For an interval $I=(a,b),$ with $-\infty\leq a < b\leq
\infty,$ we denote by $z_I(v(\cdot,t))$ 
the number, possibly infinite, of zeros $x\in I$ of the function
$x\mapsto v(x,t).$ If $I=\R$, we usually omit the subscript $\R$: 
$$
z(v(\cdot,t)):=z_\R(v(\cdot,t)).
$$
The following intersection-comparison principle holds
 \cite{Angenent:zero,Chen:strong}.
\begin{lem}\label{lemzero}
 Let $v\not\equiv 0$ be a solution of \emph{(\ref{eqlin})} and
 $I=(a,b),$ with $-\infty\leq a < b\leq \infty.$ Assume that the
 following conditions are satisfied: 
  \begin{itemize}
  \item if $b<\infty,$ then $v(b,t)\neq0$ for all $t\in\lp s,T\rp,$
  \item if $a>-\infty,$ then $v(a,t)\neq0$ for all $t\in\lp s,T\rp.$
 \end{itemize}
Then the following statements hold true.
\begin{enumerate}
 \item[(i)] For each $t\in\lp s,T\rp,$ all zeros of $v(\cdot,t)$ are
   isolated. In particular, if $I$ is bounded, then
   $z_I(v(\cdot,t))<\infty$ for all $t\in\lp s,T\rp.$ 
 \item[(ii)] The function $t\mapsto z_I(v(\cdot,t))$ is monotone
   non-increasing on $(s,T)$ with values in
   $\N\cup\{0\}\cup\{\infty\}.$ 
 \item[(iii)] If for some $t_0\in(s,T)$ the function $v(\cdot,t_0)$
   has a multiple zero in $I$ and $z_I(v(\cdot,t_0))<\infty,$ then for
   any $t_1,t_2\in(s,T)$ with $t_1<t_0<t_2,$ one has 
 \begin{equation}\label{zerodrop}
   z_I(v(\cdot,t_1))>z_I(v(\cdot,t_0))\ge z_I(v(\cdot,t_2)).
 \end{equation}
 \end{enumerate}
\end{lem}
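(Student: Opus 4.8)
The plan is to follow the classical route of Angenent \cite{Angenent:zero} and Chen \cite{Chen:strong}: reduce all three assertions to a single local statement about the structure of the zero set of $v$ near one space--time point, and obtain that local statement from a strong unique continuation property of \eqref{eqlin}. To begin, interior parabolic regularity makes $x\mapsto v(x,t)$ smooth and $t\mapsto v(\cdot,t)$ continuous (indeed $C^1$) on $(s,T)$, so all the quantities below are well defined. For part (i) I would argue by contradiction: if $v(\cdot,t_0)$ vanished to infinite order in $x$ at some $x_0\in I$, then a strong unique continuation theorem for \eqref{eqlin} (which is precisely the content of \cite{Chen:strong}; alternatively one invokes Carleman estimates) forces $v\equiv 0$, contradicting nontriviality. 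Hence every zero of $v(\cdot,t_0)$ in $I$ has finite order and is therefore isolated; combined with the hypothesis that $v$ does not vanish at a finite endpoint (which prevents zeros from accumulating there), a routine compactness argument yields $z_I(v(\cdot,t_0))<\infty$ whenever $I$ is bounded.

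The heart of the matter is the behavior near a zero. Fix $(x_0,t_0)$ with $v(x_0,t_0)=0$ and let $n\ge 1$ be its (finite) order of vanishing in $x$. The key local lemma is: there exist $\varepsilon>0$ and $\delta>0$ such that for $0<|t-t_0|<\delta$ all zeros of $v(\cdot,t)$ in $(x_0-\varepsilon,x_0+\varepsilon)$ are simple, the count $z_{(x_0-\varepsilon,\,x_0+\varepsilon)}(v(\cdot,t))$ is nonincreasing across $t_0$, and it strictly decreases whenever $n\ge 2$. To prove this one establishes a ``parabolic Taylor expansion'' of $v$ at $(x_0,t_0)$: modulo higher-order terms, $v$ agrees near $(x_0,t_0)$ with a nonzero polynomial solution $q$ of the pure heat equation $q_t=q_{xx}$ of degree $n$, the zero-order term $c(x,t)v$ being a lower-order perturbation that can be absorbed. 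One then counts the real zeros of such a caloric polynomial on the slices $t=t_0\pm\tau$: for $\tau>0$ small there are strictly fewer than $n$ of them (for instance $x^2+2\tau$ has none and $x^3+6\tau x$ has one), while for $\tau<0$ there are $n$ simple ones at distance $\sim|\tau|^{1/2}$ from $x_0$; a perturbation argument transfers the count from $q$ to $v$. A simple zero ($n=1$) instead persists as a smooth curve $t\mapsto\xi(t)$ by the implicit function theorem, so the local count stays constant there. Granting this lemma, parts (ii) and (iii) are bookkeeping: at a time where $z_I(v(\cdot,t))<\infty$ one covers the finitely many zeros by such small intervals and sums the local counts, using that no zero crosses a finite endpoint (by hypothesis) and, when $I$ is unbounded, that zeros can only be lost toward infinity while none can be created in the interior; (iii) is then immediate, since a multiple zero forces a strict drop of some local count.

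The main obstacle is exactly this local analysis --- the parabolic Taylor expansion at a zero and the comparison with caloric polynomials, resting on the unique continuation / Carleman estimates that also underlie the finiteness of the vanishing order. This is the technical core of \cite{Angenent:zero,Chen:strong}; reproving it in full would be long and is by now standard, so in the paper I would simply cite \cite{Angenent:zero,Chen:strong} for the local structure and record only the covering argument that assembles (i)--(iii) from it.
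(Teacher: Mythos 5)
This lemma is quoted in the paper without proof: it is stated as a known intersection--comparison principle with a citation to Angenent and Chen, which is exactly what you conclude one should do. Your sketch (finite vanishing order via strong unique continuation, the local caloric-polynomial analysis at a multiple zero, then the covering/bookkeeping step) is a faithful outline of the cited proofs, so your proposal is correct and in effect takes the same route as the paper.
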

If (\ref{zerodrop}) holds,  we say that $z_I(v(\cdot,t))$ drops in the
interval $(t_1,t_2).$ 

\begin{remark}\label{convzero}{\rm
 It is clear that if the assumptions of Lemma \ref{lemzero} are
 satisfied and for some $t_0\in(s,T)$ one has
 $z_I(v(\cdot,t_0))<\infty,$ then  
 $z_I(v(\cdot,t))$ can drop at most finitely many times in $(t_0,T)$;
 and if it is constant on $(t_0,T),$ then $v(\cdot,t)$ has only simple
 zeros in $I$  
 for all $t\in (t_0,T).$ In particular, if $T=\infty,$ there exists
 $t_1<\infty$ such that $t\mapsto z_I(v(\cdot,t))$ is constant on
 $(t_1,\infty)$ and for $t>t_1$ all zeros  of $v(\cdot,t)$ 
 are simple.	
}
\end{remark}

Using the previous remark and the implicit function 
theorem, we obtain the following corollary.
\begin{cor}\label{zeroIFT}
 Assume that the assumptions of Lemma \ref{lemzero} are satisfied and that the function $t\mapsto z_I(v(\cdot,t))$ is constant on $(s,T).$
 If for some $(x_0,t_0)\in I\times(s,T)$ one has  $v(x_0,t_0)=0,$ then
 there exists a $C^1$- function $t\mapsto\eta(t)$ defined for
 $t\in(s,T)$ such that  $\eta(t_0)=x_0$ 
and $v(\eta(t),t)=0$ for all $t\in(s,T).$ 
\end{cor}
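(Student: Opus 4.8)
The plan is to use parabolic regularity together with the hypothesis to set up a local zero curve via the implicit function theorem at $(x_0,t_0)$, and then to continue this curve, by a connectedness argument, to all of $(s,T)$.

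First I would record the two ingredients needed for the implicit function theorem. By interior parabolic regularity for \eqref{eqlin}, $v$ and $v_x$ are continuous on $\R\times(s,T)$. Moreover, since $t\mapsto z_I(v(\cdot,t))$ is constant on $(s,T)$, Remark~\ref{convzero}, applied with any $t_0'\in(s,T)$ in place of $t_0$, shows that for every $t\in(s,T)$ each zero of $v(\cdot,t)$ lying in $I$ is simple; equivalently, $v_x(x,t)\ne 0$ whenever $x\in I$ and $v(x,t)=0$. (When the constant value of $z_I$ is $\infty$ one runs the same argument on bounded subintervals of $I$ whose endpoints avoid the zero set; in the applications $I$ is bounded.) In particular $v_x(x_0,t_0)\ne 0$, so the implicit function theorem provides $\varepsilon>0$ and a $C^1$ function $\eta$ on $(t_0-\varepsilon,t_0+\varepsilon)$ with $\eta(t_0)=x_0$, $\eta(t)\in I$, and $v(\eta(t),t)=0$; it also gives local uniqueness, namely, in a suitable neighborhood of $(x_0,t_0)$ the zero set of $v$ is exactly the graph of $\eta$. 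By this local uniqueness, two such $C^1$ curves through $(x_0,t_0)$ agree on the overlap of their intervals of definition, so there is a largest open interval $(\alpha,\beta)\subset(s,T)$ containing $t_0$ on which there is a $C^1$ function $\eta:(\alpha,\beta)\to I$ with $\eta(t_0)=x_0$ and $v(\eta(t),t)=0$ for all $t\in(\alpha,\beta)$.

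Next I would show $(\alpha,\beta)=(s,T)$. Assume $\beta<T$ for contradiction. The first point is a confinement estimate: for every compact $[t_1,t_2]\subset(s,T)$ there is a compact $K\subset I$ containing all zeros $x\in I$ of $v(\cdot,t)$ with $t\in[t_1,t_2]$. If $b<\infty$ this holds because $\min_{t\in[t_1,t_2]}|v(b,t)|>0$ (compactness, using $v(b,\cdot)\ne 0$ from the hypotheses of Lemma~\ref{lemzero}), so uniform continuity of $v$ forces $v(\cdot,t)$ to be nonzero on a fixed one-sided neighborhood of $b$ for all such $t$; symmetrically at $a$ when $a>-\infty$; an endpoint at $\pm\infty$ is the delicate case mentioned below. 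Taking $[t_1,t_2]=[\beta-\varepsilon_0,\beta]$ gives $\eta(t)\in K$ for $t\in(\beta-\varepsilon_0,\beta)$. I would then show $x_\ast:=\lim_{t\uparrow\beta}\eta(t)$ exists: otherwise $\liminf_{t\uparrow\beta}\eta(t)<\limsup_{t\uparrow\beta}\eta(t)$, and choosing $c$ strictly between them with $v(c,\beta)\ne 0$ (possible since zeros of $v(\cdot,\beta)$ are isolated by Lemma~\ref{lemzero}(i)), say $v(c,\beta)>0$, we get $v(c,\cdot)>0$ near $\beta$ while, by continuity of $\eta$ and the intermediate value theorem, $\eta(t)=c$ for some $t$ arbitrarily close to $\beta$, a contradiction. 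Hence $x_\ast\in K\subset I$, and by continuity $v(x_\ast,\beta)=0$, so $x_\ast$ is a simple zero; applying the implicit function theorem at $(x_\ast,\beta)\in I\times(s,T)$ gives a $C^1$ zero curve $\tilde\eta$ on some $(\beta-\tilde\varepsilon,\beta+\tilde\varepsilon)\subset(s,T)$ with $\tilde\eta(\beta)=x_\ast$ and, by local uniqueness, $\tilde\eta(t)=\eta(t)$ for $t<\beta$ close to $\beta$ (because $\eta(t)\to x_\ast$). Thus $\tilde\eta$ extends $\eta$ beyond $\beta$, contradicting maximality. Hence $\beta=T$, and symmetrically $\alpha=s$, so $\eta$ is a $C^1$ function on $(s,T)$ with $\eta(t_0)=x_0$ and $v(\eta(t),t)=0$ for all $t$, as claimed.

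The step I expect to be the main obstacle is the continuation: one must rule out that, as $t\uparrow\beta$, the zero curve either approaches $\partial I$ or oscillates without a limit. The oscillation is excluded cheaply by the intermediate value theorem together with the fact (Lemma~\ref{lemzero}(i)) that zeros are isolated, and the approach to a finite endpoint of $I$ is excluded by the non-vanishing of $v$ there; the genuinely delicate case is an infinite endpoint, where one additionally needs that zeros of a bounded solution of \eqref{eqlin} cannot drift off to $\pm\infty$ in finite time, which does not arise in the situations where the corollary is applied.
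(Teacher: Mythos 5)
Your argument is correct and follows the same route the paper intends: the paper offers no written proof beyond ``using the previous remark and the implicit function theorem,'' i.e.\ constancy of $z_I$ plus Remark \ref{convzero} gives simplicity of all zeros on $(s,T)$, and the implicit function theorem with a continuation argument yields the global $C^1$ curve, which is exactly what you spell out. Your extra care in the continuation step (confinement away from finite endpoints via $v(a,t),v(b,t)\neq 0$, exclusion of oscillation via isolated zeros, and the flagged caveat about infinite endpoints, which indeed does not occur in the paper's applications where $I$ is bounded) is a faithful filling-in of the details the paper leaves implicit.
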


We will also need the following robustness lemma 
(see \cite[Lemma 2.6]{Du-M}).
\begin{lem}\label{robustnesszero}
 Let $w_n(x,t)$ be a sequence of functions converging to $w(x,t)$ in
 $\displaystyle C^1\lp I\times(s,T)\rp$ where $I$ is an open
interval. Assume that $w(x,t)$ 
 solves a linear equation (\ref{eqlin}), $w\not\equiv0$, and  $w(\cdot,t_0)$
 has a multiple zero  $x_0\in I$ for some $t_0\in(s,T)$.
 Then there exist sequences $x_n\to x_0$, $t_n\to t_0$
 such that for all  sufficiently large 
$n$ the function $w_n(\cdot,t_n)$ has a multiple zero at $x_n$. 
\end{lem}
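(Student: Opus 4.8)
The plan is to reduce the statement to a purely topological fact about the zero set of $w_n$ on a small space--time rectangle, using the zero-number machinery (Lemma~\ref{lemzero}, Remark~\ref{convzero}) only for the limit $w$, which is the function known to solve \eqref{eqlin}. First I localize: by Lemma~\ref{lemzero}(i) the zeros of $w(\cdot,t_0)$ are isolated, so I fix $\delta>0$ with $[x_0-\delta,x_0+\delta]\subset I$, with $x_0$ the only zero of $w(\cdot,t_0)$ in this interval, and with $w(x_0\pm\delta,t_0)\neq0$; by continuity there is $\epsilon_0>0$ with $[t_0-\epsilon_0,t_0+\epsilon_0]\subset(s,T)$ and $w(x_0\pm\delta,t)\neq0$ for $|t-t_0|\le\epsilon_0$. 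Put $I'=(x_0-\delta,x_0+\delta)$. On the time interval $(t_0-\epsilon_0,t_0+\epsilon_0)$, Lemma~\ref{lemzero} applies to $w$ on $I'$: the count $z_{I'}(w(\cdot,t))$ is finite, nonincreasing, and drops only finitely often; since $w(\cdot,t_0)$ has a multiple zero at $x_0$ and $z_{I'}(w(\cdot,t_0))=1$, part~(iii) gives $z_{I'}(w(\cdot,t))\ge2$ for $t\in[t_0-\epsilon_0,t_0)$ and $z_{I'}(w(\cdot,t))\le1$ for $t\in[t_0,t_0+\epsilon_0)$. Choosing $t_1\in(t_0-\epsilon_0,t_0)$ larger than every time at which $z_{I'}(w(\cdot,\cdot))$ drops in $[t_0-\epsilon_0,t_0)$ and $t_2\in(t_0,t_0+\epsilon_0)$ smaller than every such time in $(t_0,t_0+\epsilon_0]$, I ensure (as in Remark~\ref{convzero}) that $w(\cdot,t_1)$ and $w(\cdot,t_2)$ have only simple zeros in $I'$, with $m:=z_{I'}(w(\cdot,t_1))\ge2>\ell:=z_{I'}(w(\cdot,t_2))$.

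Next I transfer these counts to $w_n$. Since the simple zeros of $w(\cdot,t_1)$ in $I'$ are sign changes and $w_n(\cdot,t_1)\to w(\cdot,t_1)$ uniformly, for all large $n$ the function $w_n(\cdot,t_1)$ also changes sign near each of them, so $z_{I'}(w_n(\cdot,t_1))\ge m$. Conversely, covering $\overline{I'}$ by small neighbourhoods of the finitely many simple zeros of $w(\cdot,t_2)$---on each of which $w_{n,x}(\cdot,t_2)$ has a fixed sign for $n$ large, hence $w_n(\cdot,t_2)$ is strictly monotone and has at most one zero---together with the complementary compact set, on which $|w(\cdot,t_2)|$ is bounded below, yields $z_{I'}(w_n(\cdot,t_2))\le\ell$ for $n$ large. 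Finally, since $w(x_0\pm\delta,\cdot)$ is bounded away from $0$ on $[t_1,t_2]$, also $w_n(x_0\pm\delta,t)\neq0$ for all $t\in[t_1,t_2]$ and $n$ large.

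Now the core step, for such a large $n$. Suppose $w_n$ had no multiple zero in $\overline{I'}\times[t_1,t_2]$. Then its zero set $Z_n=\{(x,t)\in[x_0-\delta,x_0+\delta]\times[t_1,t_2]:w_n(x,t)=0\}$ avoids the vertical edges $x=x_0\pm\delta$ and satisfies $w_{n,x}\neq0$ at each of its points, so by the implicit function theorem $Z_n$ is a compact $C^1$ embedded $1$-manifold with boundary, lying in $I'\times[t_1,t_2]$, whose boundary lies on the horizontal edges $\{t=t_1\}\cup\{t=t_2\}$ and which has no horizontal tangent line anywhere. Hence $Z_n$ has no circle component, and each arc component is the graph of a $C^1$ function $x=\eta(t)$ defined on the whole of $[t_1,t_2]$; consequently $z_{I'}(w_n(\cdot,t))$ equals the number of components of $Z_n$ for every $t\in[t_1,t_2]$, so $z_{I'}(w_n(\cdot,t_1))=z_{I'}(w_n(\cdot,t_2))$, contradicting $2\le m\le z_{I'}(w_n(\cdot,t_1))$ and $z_{I'}(w_n(\cdot,t_2))\le\ell\le1$. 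Thus $w_n$ has a multiple zero at some $(\xi_n,\theta_n)\in I'\times[t_1,t_2]$ for every large $n$. To obtain sequences converging to $(x_0,t_0)$, I would repeat the whole construction with $t_1\uparrow t_0$ and $t_2\downarrow t_0$ (keeping $\delta$ fixed), so that the corresponding $\theta_n\to t_0$; any subsequential limit $(x_*,t_0)$ of the resulting multiple zeros is itself a multiple zero of $w(\cdot,t_0)$ in $[x_0-\delta,x_0+\delta]$, hence $(x_*,t_0)=(x_0,t_0)$ by the choice of $\delta$, so the $x$-coordinates converge to $x_0$ as well, and a diagonal extraction produces the required $x_n\to x_0$, $t_n\to t_0$.

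I expect the main obstacle to be the topological step: showing that $Z_n$ is a manifold all of whose components cross the full time interval, in particular ruling out closed loops and arcs with both endpoints on the same horizontal edge. This is exactly where the two hypotheses combine: the absence of multiple zeros of $w_n$ makes $Z_n$ locally a graph over $t$ (no horizontal tangents, hence no loops and no turning points in $t$), while the non-vanishing of $w_n$ on the vertical edges forces $\partial Z_n$ onto the horizontal edges. The zero-number input for $w$ and the perturbation estimates for $w_n$ are routine; the only real subtlety elsewhere is that the $w_n$ are not assumed to satisfy any equation, so the constancy in $t$ of their zero count on $[t_1,t_2]$ must be read off from this geometry rather than from parabolic theory.
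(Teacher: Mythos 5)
The paper does not actually prove this lemma: it is imported verbatim from Du and Matano \cite{Du-M}, so there is no internal proof to compare against. Your argument is a correct self-contained proof, and it follows what is essentially the standard route for such robustness statements: apply the zero-number theory only to the limit $w$ to get a window $I'=(x_0-\delta,x_0+\delta)$ with $w\ne 0$ on its lateral sides near $t_0$ and times $t_1<t_0<t_2$ at which $w(\cdot,t_i)$ has only simple zeros in $I'$ with $z_{I'}(w(\cdot,t_1))\ge 2$ and $z_{I'}(w(\cdot,t_2))\le 1$ (Lemma \ref{lemzero}(iii) with $z_{I'}(w(\cdot,t_0))=1$); transfer these counts to $w_n$ using the locally uniform $C^1$ convergence; and then note that if $w_n$ had only nondegenerate zeros in the closed rectangle, its zero set would be a finite union of $C^1$ curves with no horizontal tangents and no intersection with the lateral sides, hence graphs over all of $[t_1,t_2]$, making $z_{I'}(w_n(\cdot,t))$ constant in $t$ --- contradicting the two counts. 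Your topological step is argued correctly (the local graph structure over $t$ excludes closed loops and arcs with both endpoints on the same horizontal edge), and the concluding diagonal argument with $t_1\uparrow t_0$, $t_2\downarrow t_0$, combined with the observation that any accumulation point of the degenerate zeros of $w_n$ is a degenerate zero of $w(\cdot,t_0)$ in $\overline{I'}$ and hence equals $x_0$, does yield $x_n\to x_0$, $t_n\to t_0$. The one point you should phrase more carefully is the very first: Lemma \ref{lemzero}(i) is stated under lateral nonvanishing hypotheses on all of $(s,T)$, which you cannot verify on the given interval $I$; the fact you need --- that the zeros of $w(\cdot,t_0)$, in particular $x_0$, are isolated for a nontrivial solution at an interior time --- should be quoted as the interior statement from \cite{Angenent:zero,Chen:strong} (or obtained by first restricting to a subinterval on which the lateral conditions hold). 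With that reference adjusted, the proof is complete and matches the argument one finds in the cited source.
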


In the next section we frequently use the following
standard facts, often
without notice. If $u$, $\bar u$ are bounded solutions of 
the nonlinear equation \eqref{eq1} with a Lipschitz nonlinearity,
then their  difference  $v=u-\bar u$ 
satisfies a linear equation \eqref{eqlin} with some bounded measurable
function $c$.  Similarly, $v=u_x$ and $v=u_t$  are 
 solutions of such a linear equation.

\section{Proof of the main result}\label{proof}
Throughout this section we assume the hypotheses of Theorem
\ref{mainthm} to be satisfied:  
$u_0\in \cV$ (cp. \eqref{spacelimit}) and 
\begin{equation}
  \label{hyponu0}
  \al:=u_0(-\infty)\ne \be:= u_0(+\infty).
\end{equation}
Further, we assume that the solution 
 $u(x,t)$ of
(\ref{eq1})-(\ref{ic1}) is bounded.

We denote 
\begin{equation}\label{limits-t}
 \theta_-(t):=\lim_{x\to-\infty}u(x,t),\qquad \theta_+(t):=\lim_{x\to\infty}u(x,t).
\end{equation}
These limits exist according to the following lemma (the proof can be
found in \cite[Theorem
  5.5.2]{Volpert-V-V}, for example). 
\begin{lem}\label{valueinfty}
 The limits $\theta_-(t),\theta_+(t)$ exist for all  $t> 0$ 
and are solutions of the following initial-value problems:
 \begin{equation}\label{valueinftyeq}
  \dot{\theta}_\pm=f(\theta_\pm),\qquad \theta_-(0)=\a,\ \theta_+(0)=\b.
 \end{equation}
\end{lem}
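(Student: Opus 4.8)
The plan is to identify the prospective limit at $-\infty$ with the solution $\theta_-$ of the scalar ODE $\dot\theta_-=f(\theta_-)$, $\theta_-(0)=\al$, which is globally defined because, by \eqref{coercivity}, $f$ is globally Lipschitz (with some constant $L$) and affine at infinity; then I would show that $u(x,t)\to\theta_-(t)$ as $x\to-\infty$, locally uniformly in $t\ge 0$, by squeezing $u$ between sub- and supersolutions of \eqref{eq1} that collapse onto $\theta_-(t)$ as $x\to-\infty$. The case of $+\infty$ is identical with $\be$ in place of $\al$, so I focus on $-\infty$.

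Fix $T>0$ and $\ep>0$. Let $\bar\theta$ solve $\dot{\bar\theta}=f(\bar\theta)$, $\bar\theta(0)=\al+\ep$; by continuous dependence $\bar\theta\to\theta_-$ uniformly on $[0,T]$ as $\ep\downarrow 0$ (and $\bar\theta\ge\theta_-$ since the scalar flow is order preserving, though this is not essential). Choose $R$ so large that $|u_0(x)-\al|<\ep$ for $x\le -R$, pick any $\mu>0$, and set
\[
 \bar u(x,t):=\bar\theta(t)+Me^{\mu(x+R)+(\mu^2+L)t},
\]
with $M$ chosen large in terms of $\|u\|_\infty$ and $|\al|$. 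A one-line computation using $|f(\bar\theta+h)-f(\bar\theta)|\le L|h|$ shows that $\bar u$ is a supersolution of \eqref{eq1} on $\R\times(0,T)$, while the choices of $R$ and $M$ give $\bar u(\cdot,0)\ge u_0$ on $\R$. Since $u$ is bounded and $\bar u$ is bounded below, $v:=u-\bar u$ is bounded above and satisfies a differential inequality $v_t\le v_{xx}+c(x,t)v$ with $c$ bounded (the Lipschitz difference quotient of $f$); hence a one-sided maximum principle on $\R\times(0,T)$ gives $v\le 0$, i.e. $u\le\bar u$. Consequently $\limsup_{x\to-\infty}u(x,t)\le\bar\theta(t)$ for every $t\in[0,T]$, and letting $\ep\downarrow 0$ yields $\limsup_{x\to-\infty}u(x,t)\le\theta_-(t)$. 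The mirror-image construction with the subsolution $\underline u(x,t):=\underline\theta(t)-Me^{\mu(x+R)+(\mu^2+L)t}$, $\underline\theta(0)=\al-\ep$, gives $\liminf_{x\to-\infty}u(x,t)\ge\theta_-(t)$. Therefore $\lim_{x\to-\infty}u(x,t)=\theta_-(t)$ for all $t\in[0,T]$, and since $T$ was arbitrary, for all $t\ge 0$; in particular the limit solves \eqref{valueinftyeq}.

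The only delicate point is the comparison principle on the whole line at a stage where $u$ is not yet known to have a spatial limit: what is needed is the one-sided (Phragm\'en--Lindel\"of/Tychonoff-type) statement that a solution of a linear parabolic equation on $\R\times(0,T)$ with bounded coefficient which is bounded above and nonpositive at $t=0$ is nonpositive, and this is standard. As an alternative that avoids barriers altogether, I could argue by compactness: for any sequence $x_n\to-\infty$ the translates $u_n(x,t):=u(x+x_n,t)$ are bounded solutions of \eqref{eq1}, so by interior parabolic estimates a subsequence converges in $C^{2,1}_{loc}(\R\times(0,\infty))$ to a solution $w$; since $u_0(\cdot+x_n)\to\al$ locally uniformly, $w$ is forced to be the spatially constant solution $w(x,t)=\theta_-(t)$, and the uniqueness of this limit over all sequences $x_n\to-\infty$, together with the local uniformity of the convergence, again gives the assertion. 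I expect the barrier argument to be the cleaner write-up.
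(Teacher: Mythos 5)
Your argument is correct, but it is worth noting that the paper does not prove this lemma at all: it simply cites \cite[Theorem 5.5.2]{Volpert-V-V}, so your write-up supplies a self-contained proof where the authors defer to the literature. Your barrier construction is sound: under the normalization \eqref{coercivity} the nonlinearity is globally Lipschitz, the computation $\bar u_t-\bar u_{xx}-f(\bar u)\ge f(\bar\theta)+M(\mu^2+L)e^{\mu(x+R)+(\mu^2+L)t}-M\mu^2e^{\mu(x+R)+(\mu^2+L)t}-f(\bar\theta)-LMe^{\mu(x+R)+(\mu^2+L)t}=0$ does verify the supersolution property, the choice of $R$ and $M$ handles the initial ordering, and since $u$ is bounded (the standing assumption of Section 3) the one-sided Phragm\'en--Lindel\"of comparison on $\mathbb{R}\times(0,T)$ applies; letting $x\to-\infty$ kills the exponential and letting $\varepsilon\downarrow 0$ uses only Gronwall-type continuous dependence for the scalar ODE, so the squeeze $\theta_-(t)\le\liminf\le\limsup\le\theta_-(t)$ is complete. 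What this buys over the paper's citation is elementarity and transparency (everything reduces to a comparison principle already implicit in the paper's toolkit); what it costs is having to invoke the maximum principle on an unbounded domain, which you correctly flag as the only delicate point. One caveat on your alternative compactness sketch: convergence of the translates in $C^{2,1}_{loc}(\mathbb{R}\times(0,\infty))$ does not by itself identify the limit $w$ as the solution with initial datum $\alpha$, since the limit is taken away from $t=0$; you would need an extra equicontinuity-at-$t=0$ argument (e.g.\ via the variation-of-constants formula, $|u(x,t)-(e^{t\partial_x^2}u_0)(x)|\le Ct$ for bounded $u$) before concluding $w\equiv\theta_-$. The barrier version you chose as the main line does not suffer from this and is the cleaner proof.
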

\subsection{The reflection principle and stabilization of the critical
  points}\label{refl-sec}
We employ the reflection-invariance of equation \eqref{eq1} 
in a way quite common in studies of spatially
homogeneous parabolic equations. 
For any $\la\in\R,$ consider the function
$V_\la u$ defined by
\begin{equation}\label{reflexion}
 V_\la u(x,t)=u(2\la-x,t)-u(x,t),\quad x\in\R,\,t\ge 0.
\end{equation}
Being the difference of two solutions of \eqref{eq1}, $V_\la u$
is a solution of the linear equation (\ref{eqlin}) for some bounded function
$c$. 

We apply zero-number results to the functions $V_\la u$, $\la\in
\R$. First observe that for any $\la\in\R$,  hypothesis (\ref{hyponu0})
and Lemma \ref{valueinfty} imply that for $t\ge 0$ the function
$V_\la u(x,t)$ has the limits as $x\to\pm\infty$ given by
$\pm(\theta^+(t)-\theta^-(t))$, and these limits are both nonzero for
all sufficiently small $t>0$. Therefore, by Lemma \ref{lemzero},
$z(V_\la u(\cdot,t))$ is finite for all $t>0.$ 
By Remark \ref{convzero}, there  is 
 $T=T(\la)$ such that the function 
$t\mapsto z(V_\la u(\cdot,t))$ is constant on $\lp T(\la),\infty\rp$ and, for all $t>T(\la),$ all zeros 
of $V_\la u(\cdot,t)$ are simple.
In particular, since $x=\la$ is always a zero of $V_\la u$ 
by the definition of $V_\la u$,
we  have
\begin{equation}\label{uxneqzero}
-2u_x(\la,t)=\partial_x V_\la u(x,t)\vert_{x=\la}\neq0\quad (t>T(\la)).
\end{equation}
We use this to prove the following result (a similar theorem for
solutions periodic in space can be found in \cite{Chen-M:JDE}).

\begin{prop}\label{loczero}
 For any open bounded interval $I\subset\R,$ there exist
 $T_1=T_1(I)>0$ and an integer $N\ge 0$ such that, for all $t>T_1,$  
the function
  $u_x(\cdot,t)$ has exactly $N$ zeros in $I$, all of them simple.
Moreover, if $N>0$ and  $\eta_1(t)<\dots \eta_N(t)$ denote the zeros
of  $u_x(\cdot,t)$ in $I$ for $t>T_1$, then the functions 
$\eta_i(t),$ $i=1,\dots, N,$ are of class $C^1$, and for some 
$x_i^\infty\in\overline{I}$ one has 
\begin{equation}\label{loczeroeq}
\eta_i(t)\underset{t\to\infty}{\longrightarrow}x_i^\infty\quad(i=1,\dots,N). 
 \end{equation}
\end{prop}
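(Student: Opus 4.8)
The plan is to apply the zero-number results of Section~\ref{prelims} directly to $v:=u_x$, which solves a linear equation of the form \eqref{eqlin}, exploiting the endpoints of $I$ as points where this solution stays away from $0$. Note first that $u_x\not\equiv0$, because $u_0$ is nonconstant (its limits at $\pm\infty$ differ), and hence, by backward uniqueness for the linear equation satisfied by $u_x$, $u_x(\cdot,t)\not\equiv0$ for every $t\ge0$. Writing $I=(a,b)$ and applying \eqref{uxneqzero} with $\la=a$ and with $\la=b$, we obtain a time $s_0:=\max\{T(a),T(b)\}$ such that $u_x(a,t)\ne0$ and $u_x(b,t)\ne0$ for all $t>s_0$. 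Thus $v=u_x$ meets the hypotheses of Lemma~\ref{lemzero} on the interval $I$ over the time interval $(s_0,\infty)$; consequently $z_I(u_x(\cdot,t))$ is finite for every $t>s_0$ and nonincreasing in $t$, so it equals a constant $N\ge0$ for all $t>T_1$, for some $T_1\ge s_0$, and by Remark~\ref{convzero} every zero of $u_x(\cdot,t)$ in $I$ is simple once $t>T_1$.

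Assume $N>0$ (if $N=0$ there is nothing further to prove). Fixing any $t_0>T_1$ and applying Corollary~\ref{zeroIFT} to each of the $N$ zeros of $u_x(\cdot,t_0)$ in $I$ yields $C^1$ zero curves $\eta_1,\dots,\eta_N$ of $u_x$ defined on $(T_1,\infty)$. For $t>T_1$ none of them can equal $a$ or $b$ (that would make $u_x(a,\cdot)$ or $u_x(b,\cdot)$ vanish past $s_0$), so each $\eta_i$ maps $(T_1,\infty)$ into the connected set $I$; and two of them cannot cross, because at a crossing point $u_x$ has a simple zero through which, by the implicit function theorem, exactly one $C^1$ zero curve passes, so the two curves would coincide on a subset of $(T_1,\infty)$ that is both open and closed, hence on all of $(T_1,\infty)$, contradicting $\eta_i(t_0)\ne\eta_j(t_0)$. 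Therefore $\eta_1(t)<\dots<\eta_N(t)$ for all $t>T_1$, and since $u_x(\cdot,t)$ has exactly $N$ zeros in $I$, these curves enumerate them.

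It remains to prove the convergence \eqref{loczeroeq}, and this is the step likeliest to cause trouble, since a priori a zero curve confined to a bounded interval could oscillate; the resolution is once more \eqref{uxneqzero}: for every $\ell\in\R$ one has $u_x(\ell,t)\ne0$ for all $t>T(\ell)$, so no level $\ell$ is attained by any $\eta_i$ at arbitrarily large times. Indeed, if some $\eta_i$ did not converge, then $m:=\liminf_{t\to\infty}\eta_i(t)<\limsup_{t\to\infty}\eta_i(t)=:M$; picking $\ell\in(m,M)$ and using the continuity of $\eta_i$ together with the intermediate value theorem, we would find times $t_n\to\infty$ with $\eta_i(t_n)=\ell$, i.e.\ $u_x(\ell,t_n)=0$, contradicting \eqref{uxneqzero}. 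Hence each $\eta_i(t)$ converges as $t\to\infty$ to some $x_i^\infty\in[a,b]=\overline{I}$, which is \eqref{loczeroeq}. The only place where anything substantial is used is \eqref{uxneqzero} — first to make Lemma~\ref{lemzero} applicable to $u_x$ on a bounded interval, and then to forbid oscillation of the zero curves; the remaining verifications (nontriviality of $u_x(\cdot,t)$, and that the curves stay inside $I$ and never collide) are routine uses of backward uniqueness and the implicit function theorem.
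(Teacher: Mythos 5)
Your proof is correct and follows essentially the same route as the paper: use \eqref{uxneqzero} at the endpoints $a,b$ to make Lemma~\ref{lemzero} and Remark~\ref{convzero} applicable to $v=u_x$ on $I$, obtain the eventually constant number $N$ of simple zeros and the $C^1$ curves $\eta_i$, and then rule out oscillation by noting that a non-convergent $\eta_i$ would hit some fixed level $\ell$ at arbitrarily large times, contradicting \eqref{uxneqzero}. The extra verifications you include (nontriviality of $u_x$, confinement and non-crossing of the curves) are fine and only make explicit what the paper leaves implicit.
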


\begin{proof}
Let $I=(a,b)$ with real $a$ and $b$ be any open bounded interval. 
Applying  (\ref{uxneqzero}) to $\la\in \{a,b\}$ and setting 
$T_0:=\max\lp T(a),T(b)\rp$, we obtain
\begin{equation*}
 u_x(a,t)\neq0\neq u_x(b,t)\quad (t>T_0). 
\end{equation*}
The function $u_x(x,t)$ is a solution of a linear equation 
(\ref{eqlin}). Hence, by Lemma \ref{lemzero},
there exist $T_1\geq T_0$ and $N\geq0$ such that  for all 
$t>T_1$ the function
  $u_x(\cdot,t)$ has exactly $N$ zeros in $I$, all of them simple.
If $N=0$, the proof of 
 Proposition \ref{loczero} is finished. 

Assume that $N\geq1.$ Let $\eta_1(t)<\dots <\eta_N(t)$
the zeros of  $u_x(\cdot,t)$ in $I$. The simplicity of these zeros and
the implicit function theorem imply that the functions 
$\eta_i(t),$ $i=1,\dots, N,$ are of class $C^1$. 
It remains to show that these functions are all convergent. 
Assume for a contradiction that for some $i\in \{1,\dots,N\}$
the function
 $\eta_i(t)$ is not convergent as $t\to\infty.$
Then it admits at least two 
accumulation points $x_i^\infty\neq\tilde{x}_i^\infty.$ 
Consequently, for $\la:=(x_i^\infty+\tilde{x}_i^\infty)/{2}$ there is a
sequence  $t_n\to\infty$ such that 
$\eta_i(t_n)=\la$ for all $n$, which is a
contradiction to (\ref{uxneqzero}).
\end{proof}

\begin{Remark}
  \label{rm:equal-lim}
  {\rm The above reflection argument is the only  place in this paper
    where need the hypothesis that the limits $\al=u_0(-\infty)$
    and $\be=u_0(\infty)$ are distinct. More specifically, we only
    need the hypothesis to ensure
    that the zero number $z(V_\la(\cdot,t)$ is finite for
    any $t>0$ and for any $\la\in\R$. If $\al$ and $\be$ are equal and
    the solution 
    $u(\cdot,\cdot,u_0)$ happens to have the previous property,
    then the results of
    this paper, and in particular Theorem \ref{mainthm}, are valid for
    this solution.  
    }
\end{Remark}

We complement the  result of Proposition \ref{loczero} with the
following useful information.   

\begin{lemma}
  \label{le-conv0}
Under the notation of Proposition \ref{loczero}, if $N>0$, then 
for any $\la\in \{x_i^\infty: i=1,\dots,N\}$ one has 
\begin{equation}
  \label{eq:3}
  V_{\la}u(\codt,t)\underset{t\to\infty}{\longrightarrow}0\ \text{ 
in $C^1_{loc}(\R)$.} 
\end{equation}
\end{lemma}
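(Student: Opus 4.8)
The plan is to use the fact that $x=x_i^\infty$ is an accumulation point of zeros of $u_x(\cdot,t)$ together with the monotonicity of the zero number of $V_\la u$ to force $V_\la u(\cdot,t)\to 0$. First I would fix $\la=x_i^\infty$ for one such $i$ and recall from Proposition \ref{loczero} that $\eta_i(t)\to\la$ as $t\to\infty$, so $V_\la u(\cdot,t)$ has, for large $t$, a zero at $x=\la$ (by definition of $V_\la u$) and also, since $V_\la u(x,t)$ and $u_x(x,t)$ vanish simultaneously near $x=\la$... more carefully: near $x=\la$, for large $t$, the only zero of $u_x(\cdot,t)$ in a small neighborhood is $\eta_i(t)$, which tends to $\la$. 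The key observation is that $\partial_x V_\la u(\la,t)=-2u_x(\la,t)$, and I would examine whether this stays bounded away from zero or not. By \eqref{uxneqzero} we know $u_x(\la,t)\ne 0$ for $t>T(\la)$; but I claim in fact $u_x(\la,t)\to 0$. Indeed, $\eta_i(t)\to\la$ and $u_x(\eta_i(t),t)=0$; if $u_x(\la,t)$ did not tend to $0$ along some sequence $t_n\to\infty$, then since $u_x(\cdot,t)$ and its derivatives are bounded and globally Hölder on $\R\times[1,\infty)$ (parabolic estimates), passing to a limit entire solution would give a contradiction with the simplicity of the limiting zero at $\la$—or, more directly, the mean value theorem applied between $\eta_i(t)$ and $\la$ gives $|u_x(\la,t)|\le \|u_{xx}(\cdot,t)\|_\infty\,|\la-\eta_i(t)|\to 0$.

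So $\partial_x V_\la u(\la,t)=-2u_x(\la,t)\to 0$, i.e. $x=\la$ is becoming a degenerate (near-multiple) zero of $V_\la u(\cdot,t)$. Next I would invoke the zero-number machinery: by Remark \ref{convzero} applied to the linear equation solved by $V_\la u$, there is $T(\la)$ after which $z(V_\la u(\cdot,t))$ is constant and all zeros of $V_\la u(\cdot,t)$ are simple for $t>T(\la)$. This is the tension: the zeros are eventually simple, yet $x=\la$ is persistently a zero with derivative tending to $0$. I would resolve this by a compactness/entire-solution argument. Take $t_n\to\infty$ and consider $w_n(x,t):=V_\la u(x,t+t_n)$; by parabolic regularity a subsequence converges in $C^1_{loc}(\R^2)$ to an entire solution $w_\infty$ of a limiting linear equation \eqref{eqlin}, with $w_\infty(\la,t)=0$ and $\partial_x w_\infty(\la,t)=0$ for all $t\in\R$ (since these hold in the limit for each fixed $t$, using $u_x(\la,t+t_n)\to 0$). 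Thus $w_\infty(\cdot,0)$ has a multiple zero at $\la$. If $w_\infty\not\equiv 0$, then by Lemma \ref{robustnesszero} the approximating functions $w_n(\cdot,t)=V_\la u(\cdot,t+t_n)$ would have multiple zeros near $\la$ for large $n$, contradicting the eventual simplicity of zeros of $V_\la u(\cdot,t)$ for $t>T(\la)$. Hence $w_\infty\equiv 0$. Since this holds for the limit along every sequence $t_n\to\infty$ (the limit is always the zero solution), we conclude $V_\la u(\cdot,t)\to 0$ in $C^1_{loc}(\R)$, which is \eqref{eq:3}.

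I expect the main obstacle to be making the passage to the limiting entire solution fully rigorous—specifically, justifying that $w_\infty\not\equiv 0$ would be forced unless the convergence to $0$ is genuine, and ensuring Lemma \ref{robustnesszero} applies on a fixed open interval $I\ni\la$ with the correct $C^1$ convergence on $I\times(s,T)$ for suitable $s<T$. One technical point to be careful about: Lemma \ref{robustnesszero} requires the limit $w$ to solve a linear equation and be nontrivial, and requires $C^1$ convergence on $I\times(s,T)$; here the coefficients $c_n(x,t)$ in the equation for $w_n$ depend on $n$ (they come from $u,\bar u=$ reflection), so I would first extract a weak-$*$ limit $c_\infty\in L^\infty$ and note $w_\infty$ solves \eqref{eqlin} with $c_\infty$—this is the standard argument already sketched in Subsection \ref{invariance} for entire solutions. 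Once that is in place, the contradiction is immediate. An alternative, perhaps cleaner route avoiding entire solutions: apply Lemma \ref{robustnesszero} directly with $w_n=w$ all equal to $V_\la u$ itself—if $V_\la u(\cdot,t)\not\to 0$ in $C^1_{loc}$, one shows $V_\la u(\cdot,t_0)$ has a multiple zero at some point for arbitrarily large $t_0$ (using $\partial_x V_\la u(\la,t)\to 0$ and $V_\la u(\la,t)=0$, a multiple zero "at infinity" in $t$), contradicting eventual simplicity. I would present whichever of these two formulations is shortest given the lemmas already available; both hinge on the same core fact that $u_x(x_i^\infty,t)\to 0$.
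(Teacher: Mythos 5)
Your proposal is correct and follows essentially the same route as the paper: show $u_x(\la,t)\to 0$ (so $x=\la$ is a persistent zero of $V_\la u$ with vanishing $x$-derivative), pass along an arbitrary sequence $t_n\to\infty$ to a limiting entire solution whose reflection difference has a multiple zero at $x=\la$ for all times, conclude via zero-number theory that this limit is identically zero, and deduce full convergence since every subsequential limit vanishes. The only cosmetic differences are that the paper takes the limit at the level of $u$ itself (so $V_\la U$ automatically solves a linear equation, avoiding your weak-$*$ coefficient extraction, an alternative you yourself note) and kills the nontrivial limit directly with Lemma \ref{lemzero} rather than transferring the multiple zero back to $V_\la u$ via Lemma \ref{robustnesszero}; both variants are sound.
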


\begin{proof}
It is sufficient to prove the following statement. 
Given any sequence $t_n\to\infty$, one can pass 
to  a subsequence such that \eqref{eq:3} holds with $t$ replaced
by $t_n$. 

This will be shown using an entire solution $U$, as constructed in 
Section \ref{invariance}. Passing to a subsequence of $\{t_n\}$, we
may assume that $u(\cdot,\cdot+t_n)\to U$  in $C^1_{loc}(\R^2)$,
where $U$ is an entire solution of \eqref{eq1}. Then also
$V_\la u(\cdot,\cdot+t_n)\to V_\la U$  in $C^1_{loc}(\R^2)$, for any
$\la$. If now $\la\in \{x_i^\infty: i=1,\dots,N\}$, then 
\begin{equation*}
  \partial_x V_\la u(x,t)=-2u_x(\la,t)\underset{t\to\infty}{\longrightarrow}0.
\end{equation*}
It follows that 
\begin{equation*}
  \partial_x V_\la U(\la,t)=0 \quad (t\in\R).
\end{equation*}
Since we  also have $V_\la U(\la,t)=0$ (cp. \eqref{reflexion}), $x=\la$
is a multiple zero of $V_\la U(\cdot,t)$ for all $t\in\R$. Using Lemma
\ref{lemzero}, one shows easily that this is only possible if 
$V_\la U\equiv 0$. This in particular yields the desired conclusion:
\begin{equation*}
  V_{\la}u(\codt,t_n){\to}0\ \text{ 
in $C^1_{loc}(\R)$.} \qedhere
\end{equation*}
\end{proof}

Take now the intervals  $I_k:=\lp-k,k\rp,$ $k=1,2,\dots$, 
 and let $N_k$ be the number of zeros of $u_x(\cdot,t)$ in $I_k$ for
 $t>T_1(I_k)$, where $T_1$ is as in Proposition \ref{loczero}.
We distinguish the following mutually exclusive cases.
\begin{itemize}
 \item[(C1)] There is $k_0$ such that
 $N_k=0$ for $k=k_0,k_0+1,\dots$. 
 \item[(C2)] There is $k_0$ such that
 $N_k=1$ for $k=k_0,k_0+1,\dots$.
 \item[(C3)] There is $k_0$ such that
 $N_k\ge 2$ for $k=k_0,k_0+1,\dots$.
\end{itemize}
According to Proposition \ref{loczero}, 
(C1) means that each bounded interval is free of critical points of 
$u(\cdot,t)$ for $t$ large enough. In the case (C2),  
$u(\cdot,t)$ has exactly one critical point $\eta(t)$ such that 
$\eta(t)$ has a finite limit as $t\to\infty$; moreover, in any bounded
interval, $u(\cdot,t)$ has no critical points different from
$\eta_1(t)$ for $t$
large enough. In the case (C3), there are more than one critical points
of $u(\cdot,t)$ with finite limit as $t\to\infty$. 

We give the proof of Theorem \ref{mainthm} in each of these cases
separately. 

\subsection{Case (C1): no limit critical point}
We consider case (C1) here. Clearly, (C1) implies that
$u_x(\cdot,t)$ is of one sign in $I_k$ for large $t$ and this sign is
independent of $t$. Without loss of generality, replacing $u(x,t)$ by
$u(-x,t)$ if necessary, we assume that for all $k$ one has 
\begin{equation}\label{uxnegative}
 u_x(x,t)<0\quad (x\in(-k,k),\ t>T(I_k)).
\end{equation}
In this situation, we have the following result concerning the
$\om$-limit set $\om(u)$:

\begin{lem}\label{nointersection}
Let  $\psi$ be any nonconstant periodic solution of {\rm (\ref{steadyeq})}.
Then 
 $$
 \tau(\vp)\cap\tau(\psi)=\emptyset \quad (\varphi\in \om(u)).
 $$
\end{lem}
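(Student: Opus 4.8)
The plan is to argue by contradiction: suppose $\varphi \in \omega(u)$ and $\tau(\varphi) \cap \tau(\psi) \neq \emptyset$ for some nonconstant periodic solution $\psi$ of \eqref{steadyeq}. The key fact to exploit is that in case (C1), the solution $u(\cdot,t)$ is eventually strictly decreasing on every bounded interval (see \eqref{uxnegative}), hence $u(\cdot,t)$ is in fact strictly decreasing on all of $\R$ for $t$ large. Consequently every $\varphi \in \omega(u)$ is monotone nonincreasing on $\R$, so its spatial trajectory $\tau(\varphi)$ lies in the closed lower half-plane $\{v \le 0\}$ and crosses the $u$-axis only at the endpoints of the range of $\varphi$. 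A full nonstationary periodic orbit $\MO = \tau(\psi)$, on the other hand, is a closed curve symmetric about the $u$-axis and meets the open upper half-plane (cf. \eqref{periodicorbits}). So $\tau(\varphi)$ can touch $\MO$ only in the closed lower half-plane, and in fact only at an interior arc of $\MO$ where $v < 0$ (the points $(p,0),(q,0)$ of $\MO$ cannot be limits of interior points of $\tau(\varphi)$ unless $\varphi$ is constant there).

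First I would make the monotonicity of elements of $\omega(u)$ precise: from \eqref{uxnegative}, letting $k\to\infty$, one gets $u_x(x,t) \le 0$ for all $x$ once $t$ is large (strict inequality on each bounded interval), and passing to the limit along a defining sequence $t_n \to \infty$ for $\varphi$ in $C^1_{loc}$ gives $\varphi_x \le 0$ on $\R$. Next, suppose $(\varphi(x_0),\varphi_x(x_0)) \in \MO$ for some $x_0$. Since $\varphi$ and $\psi$ both solve the autonomous ODE \eqref{steadyeq} and their trajectories share a point, by uniqueness for the first-order system \eqref{sys} the orbit of $(\varphi,\varphi_x)$ that passes through that point must coincide locally with $\MO$; thus $\tau(\varphi)$ contains a subarc of the periodic orbit $\MO$. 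Then $(\varphi,\varphi_x)$, as a solution of \eqref{sys}, is a periodic trajectory — but a periodic solution of \eqref{steadyeq} is not monotone on $\R$, contradicting $\varphi_x \le 0$. (Equivalently: a monotone solution of the autonomous second-order ODE must have its trajectory on a connected arc with $\varphi_x$ of one sign, which no closed orbit provides.) This already rules out $\tau(\varphi)$ meeting the open part of $\MO$.

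It remains to handle the possibility that $\tau(\varphi)$ meets $\MO$ only at one of the two points $(p,0)$ or $(q,0)$ on the $u$-axis — say $(\varphi(x_0),\varphi_x(x_0)) = (q,0)$, so $x_0$ is a critical point of $\varphi$. Here I would invoke Lemma \ref{translatedorbit} if applicable, or argue directly: a critical point of a monotone nonincreasing function is either an endpoint-type plateau or an isolated degenerate point; using the zero-number machinery (Lemma \ref{lemzero}) applied to $V_\lambda u$ with $\lambda$ near the corresponding spatial location, together with the fact that in case (C1) there are no limit critical points of $u$ in any bounded interval, one shows $\varphi_x \equiv 0$ near $x_0$ is impossible unless $\varphi$ is constant, in which case $\tau(\varphi)$ is a single equilibrium point, which does not lie on the nonstationary orbit $\MO$. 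The main obstacle I anticipate is exactly this boundary case: ruling out tangential contact of $\tau(\varphi)$ with $\MO$ at the turning points $(p,0),(q,0)$ of the periodic orbit. The cleanest route is probably to combine the monotonicity of $\varphi$ with a zero-number argument for $V_\lambda u$ (or for $u - \psi(\cdot - c)$) to force a trajectory coincidence and then derive a contradiction with (C1) via Proposition \ref{loczero}; I would develop that carefully rather than the phase-plane picture alone, since the phase-plane argument handles the open part of $\MO$ immediately but is delicate at the axis.
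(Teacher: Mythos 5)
There is a genuine gap, and it sits at the heart of your argument. In the step ``Since $\varphi$ and $\psi$ both solve the autonomous ODE \eqref{steadyeq} and their trajectories share a point, by uniqueness for the first-order system \eqref{sys} \dots'' you assume that $\varphi\in\omega(u)$ is a solution of the steady-state equation \eqref{steadyeq}. But this is precisely what is not known at this stage: the whole point of Lemma \ref{nointersection} (and of Theorem \ref{mainthm}) is to eventually conclude that elements of $\omega(u)$ are steady states, so you cannot feed that into the proof. An element $\varphi\in\omega(u)$ is, a priori, only the time-zero slice of an entire solution $U$ of the \emph{parabolic} equation; its spatial trajectory $\tau(\varphi)$ is defined for any bounded $C^1$ function and need not lie on any orbit of \eqref{sys}. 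Hence ODE uniqueness gives you nothing, and the phase-plane argument that disposes of the ``interior'' part of the periodic orbit collapses. The same issue infects your treatment of the turning points $(p,0)$, $(q,0)$: Lemma \ref{translatedorbit} requires $\tau(\varphi)\subset\tau(\psi)$, which you have not established (you only have one common point), and the zero-number route you sketch there is left as an intention (``I would develop that carefully'') rather than an argument.

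For comparison, the paper's proof never treats $\varphi$ as an ODE solution. It passes to the entire solution $U$ with $U(\cdot,0)=\varphi$, notes that $w:=U-\psi$ solves a linear equation \eqref{eqlin}, is not identically zero (because $\psi$ is nonconstant periodic while $\varphi_x\le 0$ by \eqref{uxnegative}), and has a multiple zero at the intersection point; Lemma \ref{robustnesszero} then produces times $\tilde t_n\to\infty$ and points $x_n$ at which $u(\cdot,\tilde t_n)-\psi$ has a multiple zero. The contradiction comes from an intersection-comparison argument for $z_{(\rho_-,\rho_+)}(u(\cdot,t)-\psi)$, where $\rho_-$, $\rho_+$ are a minimum and a maximum point of $\psi$ flanking the intersection: the eventual strict monotonicity \eqref{uxnegative} controls the boundary values $u(\rho_\pm,t)$ relative to $\psi(\rho_\pm)$, so Lemma \ref{lemzero} and Remark \ref{convzero} force all zeros to become simple for large $t$, contradicting the persistent multiple zeros. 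Your correct observations (that $\varphi_x\le 0$ and that monotonicity clashes with periodic structure) are indeed the right intuition, but to make them bite you must run the zero-number argument on $u-\psi$ along the actual solution, as the paper does, not on $\varphi$ itself via phase-plane uniqueness.
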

\begin{proof}
 We go by contradiction. Assume that there is $\varphi\in \om(u)$ 
 such that  $\tau(\vp)\cap\tau(\psi)\neq \emptyset.$ 
 This means that, possibly after  replacing $\psi$ by 
a translation,  there exists $x_0\in\R$ such that  
 $$
 \psi(x_0)=\vp(x_0),\qquad \psi'(x_0)=\vp'(x_0).
 $$
 To simplify the notation, we will  assume without 
loss of generality that $x_0=0$ (this can be achieved by a translation
in the original equation, with no effect on the validity of the conclusion).

 Let $U(x,t)$ be an entire solution as in 
(\ref{entiresol}). There exists a sequence $t_n\to\infty$ such that 
 $$
 u(\cdot,\cdot+t_n)\underset{n\to\infty}{\longrightarrow}U\textrm{ in }C^1_{loc}(\R^2).
 $$
Then the sequence $\displaystyle w_n:=u(\cdot,\cdot +t_n)-\psi$
converges in $C^1_{loc}(\R^2)$ to the function $w(x,t):=U(x,t)-\psi(x).$
The function $w$ solves a linear equation (\ref{eqlin}) and
$w(\cdot,0)$ has a multiple zero at $x=0.$  Also, 
$w(\cdot,0)=\varphi -\psi\not\equiv 0$, for $\psi$ is nonconstant
periodic, whereas for $\vp$ we have, as a direct
consequence of \eqref{uxnegative}, that $\vp'\leq0.$
Applying Lemma \ref{robustnesszero}, 
we obtain that  there exist sequences $x_n\to0,$ $\d_n\to0,$ such that
$w_n(\cdot,\d_n)$ has a multiple zero at $x=x_n.$ Consequently,
with  $\tilde{t}_n:=t_n+\de_n$ we have 
$\tilde{t}_n \to\infty$, $x_n\to0$,  and 
 \begin{equation}\label{doublezeronocritical}
  u(\cdot,\tilde{t}_n)-\psi \textrm{ has a multiple zero at }x={x_n}.
 \end{equation}
We show that (\ref{doublezeronocritical}) contradicts
(\ref{uxnegative}). 

Since  $\psi$ is periodic, there are
$\rho_-<0<\rho_+$ such that  $\psi(\rho_-)=\min\psi,$ $\psi(\rho_+)=\max\psi,$ and ${x_n}\in(\rho_-,\rho_+),$ for all $n.$
Consider the function 
$$
z(t):=z_{(\rho_-,\rho_+)}\lp u(\cdot,t)-\psi\rp.
$$
By (\ref{uxnegative}), there is $T_0>0$ such that for all $t>T_0$  we
have  $u_x(\cdot,t)<0$ on $(\rho_--1,\rho_++1).$ Thus, if
$z(t)>0,$ then 
\begin{equation}
  \label{eq:2}
  u(\rho_-,t)>\psi(\rho_-)\text{ and }u(\rho_+,t)<\psi(\rho_+).
\end{equation}
Therefore, Lemma \ref{lemzero} implies that 
\begin{equation}\label{zdecreasing}
 t\mapsto z(t) \textrm{ is nonincreasing on any subinterval of }\{t:t>T_0 \textrm{ and }z(t)>0\}.
\end{equation}
Assertion (\ref{doublezeronocritical}) implies that for some
 $T_1>T_0$ one has $z(T_1)>0.$ We consider two complementary cases:

\textit{Case 1:} $z(t)>0,$ for all $t>T_1.$ In this case, 
\eqref{eq:2} shows that Lemma \ref{lemzero} and Remark \ref{convzero}
apply to $v=u-\psi$. Therefore, for all large $t$, 
$u(\cdot,t)-\psi$ has only simple zeros in $(\rho_-,\rho_+)$. This is
a contradiction to  (\ref{doublezeronocritical}).

\textit{Case 2:} there exists $T_2>T_1$ such that 
$z(T_2)=0.$ Pick  large enough $n_0$ such that $\tilde t_{n_0}>T_2$ and set 
$$\displaystyle T_3:=\sup\{t\in [T_2,\tilde t_{n_0}):
z(t)=0\}.
$$
From (\ref{doublezeronocritical}) we know that $T_3<\tilde t_{n_0}.$ The
definition of $T_3$ and the monotonicity of $x\mapsto u(x,T_3)$ 
(cp. \ref{uxnegative})
implies that either
$u(\rho_-,T_3)=\psi(\rho_-)$ or  
$u(\rho_+,T_3)=\psi(\rho_+).$ 
We only consider the  first possibility, 
the other being similar. 
In addition to the relation
$u(\rho_-,T_3)=\psi(\rho_-)$, we have,
by (\ref{uxnegative}) and the definition of $\rho_-$, 
the relations $u_x(\rho_-,T_3)<0=\psi_x(\rho_-)$.
The implicit function theorem therefore implies 
that there exists a continuous
function $t\mapsto\eta(t)$ defined on a neighborhood of $T_3$ such
that on a neighborhood  of the point
of $(\rho_-,T_3)\in \R^2$  one has
$u(x,t)=\psi(x)$ if and only if $x=\eta(t).$
Using this,  (\ref{uxnegative}) and the continuity of $u,$ 
we find  $\e>0$ such that  
$u(x,t)=\psi(x)$ holds with $x\in(\rho_--\e,\rho_+]$ and $t\in[T_3,T_3+\e)$
only if $x=\eta(t)$. 
As a result, for $t>T_3$ close enough to $T_3,$ 
we have $z(t)\le 1$. The definition of 
$T_3$ implies that  $z(t)>0$ for all $t\in (T_3,t_{n_0}]$. From
this obtain that, first, 
$z(t)= 1$ for all $t>T_3$, $t\approx
T_3$, and, second, (\ref{zdecreasing}) applies to the interval  
$(T_3,t_{n_0}]$. Consequently, 
$z(t)= 1$ for all  $t\in (T_3,t_{n_0}]$. Using this,
\eqref{eq:2}, and (\ref{doublezeronocritical}) with $n=n_0$, 
we now obtain a contradiction to Lemma \ref{lemzero}(iii) (take
$t_0=t_{n_0}$ in \eqref{zerodrop}).   
\end{proof}

\begin{proof}[Proof of theorem \ref{mainthm} in the case {\rm (C1)}] 
 Assuming (\ref{uxnegative}), we show that any $\varphi\in \om(u)$ is
 either a constant steady state or a standing wave of \eqref{eq1}.   

By  (\ref{uxnegative}),   $\vp_x\leq0$. Also, from  
Lemma \ref{nointersection} we know that, in the notation of 
Lemma \ref{MatPolLemma}, $\tau(\vp)\subset \R^2\setminus\MP_0$.

If $\vp_x\equiv0$, then $\tau(\vp)$ consists of the 
single point $(\varphi(0),0)$.
According to Lemma \ref{MatPolLemma}, this point
 is an equilibrium of \eqref{sys} or 
is contained in $\tau(\psi)$, where $\psi$ is a ground state
solution of (\ref{steadyeq}). In the later case, 
from Lemma \ref{translatedorbit} we obtain 
$\vp$ is a shift of $\psi,$ which is impossible as $\vp_x\equiv 0$. 
Thus, in this case, $\vp$ is a constant steady state.  

Assume now that $\vp_x\not\equiv 0.$ We first show that
 $\vp_x<0$ on $\R.$ Indeed, let $U(x,t)$ be the entire 
solution of (\ref{eq1}) as in (\ref{entiresol}): 
 $U(\cdot,0)=\vp$ and $U(\cdot,t)\in \om(u)$ for all $t\in\R$.  
The latter implies that $U_x (\cdot,t)\le 0$ for all $t$ and the
former gives  $U_x (\cdot,0)\not\equiv 0$. Therefore, applying the maximum
principle to $U_x$, we obtain $\vp_x=U_x (\cdot,0)< 0$, as
desired. In particular, $\tau(\vp)$ does not intersect the $u-$axis in
the $u-v$ plane. This and Lemma \ref{MatPolLemma}(ii) imply
that  $\tau(\vp)\subset\tau(\psi)$, where $\tau(\psi)$
is either a heteroclinic orbit of \eqref{sys} or a homoclinic orbit
of \eqref{sys}. By 
Lemma \ref{translatedorbit}, $\vp$ is a shift of $\psi$.
This and the relation $\vp_x<0$ in particular imply that $\tau(\psi)$
cannot be a homoclinic orbit.   Thus, 
$\tau(\psi)$ is a heteroclinic orbit of \eqref{sys}, meaning that
$\psi$ is a standing wave of \eqref{eq1}; and $\phi$, being a shift of
$\psi$, is a standing wave itself.
\end{proof}

\subsection{Case (C2): a unique limit critical point}
In the case (C2), there exists a $C^1$
function $t\mapsto \eta(t)$ defined on an interval $(T_0,\infty)$ with
with  $\eta(t)\underset{t\to\infty}{\longrightarrow}\eta^\infty\in\R$ 
and with the following property. For each $k\in\{k_0,k_0+1,\dots\}$ 
there is   $T(I_k)$ such that
\begin{equation}\label{uxuniquezero}
 \{(x,t):u_x(x,t)=0, x\in(-k,k),\ t>T(I_k)\}=\{(\eta(t),t):t>T(I_k)\}.
\end{equation}
Without loss of generality, using a shift if necessary,  we will
further assume that $\eta^\infty=0.$ 

By Proposition \ref{loczero}, $x=\eta(t)$
is a simple zero of $u_x(\cdot,t)$, so $u(\cdot,t)$ has  a strict
 local minimum or a strict local maximum at   $\eta(t)$.
We only consider the latter, the former  is 
analogous. Thus, we henceforth assume that 
\begin{equation}\label{maxu}
 u\lp \eta(t),t\rp = \underset{x\in(-k,k)}{\max}u(\cdot,t),\qquad t>T(I_k).
\end{equation}

From \eqref{uxuniquezero}, \eqref{maxu},  
and Lemma \ref{le-conv0}, we obtain that each
 $\vp\in\omega(u)$ has the following properties:
\begin{equation}\label{vpeven}
 \max_{\R}\vp = \vp(0), \quad \vp_x(x)\le 0\quad(x>0),
\quad \vp(-x)=\vp(x)\quad(x\in\R).
\end{equation}
Moreover,  for each $\vp\in\omega(u)$
\begin{equation}\label{monotonicity2}
 \textrm{either }\vp_x\equiv0 \textrm{ or }\vp_x(x)<0 \textrm{ for all } x>0.
\end{equation}
To prove this, let $U$ 
be the entire solution of (\ref{eq1}) with $U(\cdot,0)=\vp$ 
and $U(\cdot,t)\subset\omega(u)$  for all $t\in \R$.
Then, by \eqref{vpeven}, for all $t\in \R$ we have 
$U_x(0,t)=0$ and 
$U_x(\cdot,t)\leq0$
on $[0,\infty)$. Since the function
$U_x$ satisfies a linear equation (\ref{eqlin}),
the maximum principle implies that 
$\vp_x=U_x(\cdot,0)$ is either identical to zero 
or strictly negative on $(0,\infty)$.

Our goal now is to prove the following result.

\begin{prop}\label{propcontinuum} For some $\gamma\in\R$, one has
 $
 \{\vp(0):\vp\in\omega(u)\}=\{\gamma\}.
 $
\end{prop}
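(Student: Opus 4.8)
\emph{Proof proposal.} The plan is to reduce the statement to the convergence of a single scalar quantity and then derive a contradiction from a zero‑number analysis of $u_t$.

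First I would record that the set $S:=\{\vp(0):\vp\in\om(u)\}$ is a compact interval. Indeed, $\om(u)$ is compact and connected in $C^1_{loc}(\R)$ (see Subsection~\ref{invariance}) and $\vp\mapsto\vp(0)$ is continuous in that topology, so $S=[m,\bar m]$ for some $m\le\bar m$. By Proposition~\ref{loczero} we have $\eta(t)\to0$, and $u_x$ is bounded on $\R\times[1,\infty)$, so $|u(\eta(t),t)-u(0,t)|\le\|u_x(\cdot,t)\|_\infty|\eta(t)|\to0$; hence it suffices to show that $u(0,t)$ converges as $t\to\infty$. Moreover, passing to entire‑solution limits along sequences $t_n\to\infty$ as in Subsection~\ref{invariance}, one checks that $S$ is exactly the set of accumulation points of $t\mapsto u(\eta(t),t)$, hence also of $t\mapsto u(0,t)$; thus it is enough to rule out $m<\bar m$, which I now assume.

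Next I would extract a differential inequality. Put $M(t):=u(\eta(t),t)$. For large $t$, by Proposition~\ref{loczero} and \eqref{maxu}, $\eta(t)$ is a nondegenerate local maximum of $u(\cdot,t)$, so $u_x(\eta(t),t)=0$ and $u_{xx}(\eta(t),t)\le0$; since $\eta\in C^1$, differentiating gives $M'(t)=u_t(\eta(t),t)=u_{xx}(\eta(t),t)+f(M(t))\le f(M(t))$, i.e. $M$ is eventually a subsolution of $\dot y=f(y)$. If $f$ were negative somewhere in $(m,\bar m)$, it would be $\le-\de<0$ on a subinterval $[\ga_1,\ga_2]\subset(m,\bar m)$, and then $M'(t)<0$ whenever $M(t)\in[\ga_1,\ga_2]$, so $M$ could never cross this band upward — impossible since $\liminf M=m<\ga_1<\ga_2<\bar m=\limsup M$. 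Hence $f\ge0$ on $[m,\bar m]$, and the same trapping argument shows that $f$ vanishes identically on no subinterval of $(m,\bar m)$. These facts will also be useful later when the limit profiles are identified with steady states.

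Finally, the contradiction. I would take $\vp^*\in\om(u)$ with $\vp^*(0)=\bar m$ (possible since $S$ is compact) and the entire solution $U^*$ with $U^*(\cdot,0)=\vp^*$ and $U^*(\cdot,t)\in\om(u)$ for all $t$. By \eqref{vpeven} each $U^*(\cdot,t)$ is even with maximum at $x=0$, so $t\mapsto U^*(0,t)$ attains its maximum $\bar m$ at $t=0$, whence $U^*_t(0,0)=0$, while differentiating $U^*_x(0,t)\equiv0$ in $t$ gives $U^*_{tx}(0,0)=0$. If $U^*$ is not a steady state, then $U^*_t$ is a nontrivial solution of a linear equation \eqref{eqlin} for which $x=0$ is a multiple zero of $U^*_t(\cdot,0)$. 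Since $u(\cdot,\cdot+t_n)\to U^*$ in $C^1_{loc}(\R^2)$ for a suitable $t_n\to\infty$, parabolic estimates and the equation give $u_t(\cdot,\cdot+t_n)\to U^*_t$ in $C^1_{loc}(\R^2)$, so Lemma~\ref{robustnesszero} yields $x_n\to0$ and $\bar t_n\to\infty$ such that $u_t(\cdot,\bar t_n)$ has a multiple zero at $x_n$; hence the zero number of $u_t(\cdot,t)$ on a fixed bounded interval containing $0$ drops at every $\bar t_n$, contradicting Lemma~\ref{lemzero} and Remark~\ref{convzero}, which force $z_I(u_t(\cdot,t))$ to be finite and eventually constant. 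The step I expect to be the main obstacle is precisely this last one: to work on a fixed bounded interval one must control $u_t$ near $x=\pm\infty$, which is done via $u_t(\pm\infty,t)=f(\theta_\pm(t))$ (Lemma~\ref{valueinfty}) together with a short case distinction according to whether $\al$ and $\be$ are zeros of $f$; and one must also dispose of the degenerate possibility that $U^*$ is itself a steady state (handled, e.g., by running the analogous argument at the minimum value $m$, or via the trajectory descriptions in Lemma~\ref{MatPolLemma} and Lemma~\ref{translatedorbit}). A variant that may streamline the behavior at infinity is to avoid $u_t$ and instead compare two entire solutions in $\om(u)$, one attaining the peak value $\bar m$ and one attaining $m$, tracking the zeros of their difference near $x=0$.
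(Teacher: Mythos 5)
Your reduction to the convergence of $u(\eta(t),t)$ and the differential inequality $M'(t)\le f(M(t))$ are fine (the latter is exactly how the paper proves $f>0$ on $(\gamma^-,\gamma^+)$ in Lemma \ref{lemmacase2}(i)). But the core of your argument --- tracking multiple zeros of $u_t$ --- has a genuine gap, and it sits exactly at the two points you flag as ``obstacles''. To contradict the multiple zeros of $u_t(\cdot,\bar t_n)$ near $x=0$ you need Lemma \ref{lemzero}/Remark \ref{convzero} for $v=u_t$: on a fixed bounded interval $I\ni 0$ this requires $u_t\neq 0$ at the endpoints of $I$ for all large $t$, which nothing in the problem controls; on $I=\R$ it requires $z_\R(u_t(\cdot,t_0))<\infty$ for some $t_0$. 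The latter holds when $f(\theta_\pm(t_0))\neq 0$, but if $\al$ or $\be$ is a zero of $f$ (e.g.\ front-like data, the motivating case), then $\theta_\pm\equiv\mathrm{const}$, $u_t(\cdot,t)\to 0$ at that spatial infinity, and there is no finiteness of the zero number of $u_t$ available --- no ``short case distinction'' via Lemma \ref{valueinfty} fixes this. This is precisely why the paper never applies the zero number to $u_t$ globally; it instead compares $u$ with nonconstant \emph{periodic} steady states $\psi$ at intermediate levels $\la\in(\gamma^-,\gamma^+)$ and controls $z_{(-\rho,\rho)}(u(\cdot,t)-\psi)$, where the needed endpoint nonvanishing $u(\pm\rho,t)<\psi(\pm\rho)$ is secured dynamically through Corollary \ref{corcase2}, Lemma \ref{lemmacase23} and a stopping-time argument.

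The second gap is the ``degenerate possibility'' that $U^*$ is a steady state: this is not a corner case but the heart of the matter. In the scenario to be excluded, the analogue of Lemma \ref{lemmacase2}(ii) shows that the constant $\gamma^+$ itself belongs to $\om(u)$, so your $\vp^*$ can be the constant $\gamma^+$ and $U^*\equiv\gamma^+$ is a steady state; your argument then produces no contradiction, and running it at the bottom level $m$ can degenerate in the same way (e.g.\ $\om(u)$ containing the constant $\gamma^+$ together with a profile at level $\gamma^-$ is exactly what must be ruled out). Note also that by \eqref{vpeven}--\eqref{monotonicity2} the only candidates in $\om(u)$ at the extreme levels are constants or ground states, so Lemmas \ref{MatPolLemma} and \ref{translatedorbit} give no contradiction either. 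The paper's Lemmas \ref{le-consts}--\ref{lemmacase23}, which exploit connectedness of $\om(u)$ and the periodic steady states at intermediate levels, are what actually eliminates this configuration; your sketch has no substitute for them.
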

Assuming that this true, we now complete the proof
of Theorem \ref{mainthm} in the case (C2); we show that $u$ is even
convergent in this case. Then we give the proof of
Proposition \ref{propcontinuum}.

\begin{proof}[Proof of Theorem \ref{mainthm} in the case {\rm (C2)}]
  We prove that $\omega(u)$ consists of a single element,
either a ground state or a constant steady state.

Given any $\vp\in \omega(u).$ Let $U$ be the 
entire solution of (\ref{eq1}) with $U(\cdot,0)=\vp,$ 
and $U(\cdot,t)\in\omega(u)$ for all $t\in\R.$
By Proposition \ref{propcontinuum}, 
$U(0,t)=\gamma$ for all $t\in\R$. Therefore, $U_t(0,\cdot)\equiv 0$.
Moreover, by (\ref{vpeven}), 
we have $U_x(0,t)=0$ for all $t$, thus $U_{xt}(0,t)=0$ for all $t$.
This means that $U_t$ has a multiple zero 
at $x=0$ for all $t\in\R$. Since 
$U_t$ satisfies a linear equation (\ref{eqlin}), Lemma \ref{lemzero}
implies that  $U_t\equiv0.$  This shows that  $\vp=U(\cdot,0)$ 
is a steady state.

We have thus proved that every function $\vp\in \omega(u)$ is a solution of 
the second order equation (\ref{steadyeq}), 
satisfying  $\vp(0)=\gamma$ and $\vp_x(0)=0.$ 
The uniqueness of this solution gives  $\omega(u)=\{\vp\}$, for some
$\vp$.  Condition (\ref{vpeven}) together with the description of the
solutions of (\ref{steadyeq}) given in Subsection \ref{stst} imply   
that $\vp$ is either a ground state or a constant equilibrium.
\end{proof}

For the proof of  Proposition \ref{propcontinuum}, we need 
several preliminary results. Denote
\begin{equation}
  \label{eq:5}
   J:= \{\vp(0):\vp\in\omega(u)\}.
\end{equation}
By compactness and connectedness of $\omega(u)$ in
$L^\infty_{loc}(\R)$, we have
\begin{equation}\label{Iinterval0}
J=[\gamma^-,\gamma^+] \text{  for some }\gamma^-\le \gamma^+,
\end{equation}
that is, $J$ is a
singleton or  a compact interval.
Proposition \ref{propcontinuum}
says that $\gamma^-= \gamma^+$, so this is what we want to show at the
end. First, we establish some properties of 
$\gamma^-$, $\gamma^+$.
In the formulations of the lemmas below,  
we consider the open interval
$(\gamma^-,\gamma^+)$ with the understanding that it is empty 
if $\gamma^-=\gamma^+$.   

\begin{lem}\label{lemmacase2} The following assertions hold:
 \begin{enumerate}
  \item[(i)] $f(s)>0$ for each  $s\in(\gamma^-,\gamma^+)$;
  \item[(ii)] if $\gamma^-<\gamma^+$, then 
 $\gamma^+\in\omega(u)$ and $f(\gamma^+)=0$.
 \end{enumerate}
 \end{lem}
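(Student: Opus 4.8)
\textbf{Proof plan for Lemma \ref{lemmacase2}.}

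The plan is to exploit the structure coming from \eqref{vpeven} and \eqref{monotonicity2} together with zero-number arguments applied to spatial reflections of $u$, very much in the spirit of Lemma \ref{nointersection}. The underlying heuristic is this: if $\gamma^-<\gamma^+$, then by connectedness of $\om(u)$ in $L^\infty_{loc}(\R)$ we can find, for every $s\in[\gamma^-,\gamma^+]$, some $\vp\in\om(u)$ with $\vp(0)=s$; each such $\vp$ is even, has its max at $0$, and is either constant or strictly decreasing on $(0,\infty)$. I will first argue that a nonconstant such $\vp$ cannot have $f(\vp(0))\le 0$. Indeed, take the entire solution $U$ with $U(\cdot,0)=\vp$ and note $U_{xx}(0,t)+f(U(0,t))=U_t(0,t)$; since $x=0$ is a strict interior maximum of the even profile $U(\cdot,t)$ we have $U_{xx}(0,t)\le 0$, and one expects to push this to get a sign condition on $f$ at the plateau values. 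The cleanest route to (i) is to show that for $s\in(\gamma^-,\gamma^+)$ the corresponding $\vp$ must be nonconstant: if some interior value $s$ were attained by a \emph{constant} $\vp\equiv s\in\om(u)$, then $f(s)=0$ and $s$ is a constant steady state sitting in the interior of the interval $J$; I then want to derive a contradiction with the connectedness of $\bigcup_{\vp\in\om(u)}\tau(\vp)$ in $\R^2$ (a constant steady state in the interior would force the trajectories of nearby profiles to cross the $u$-axis at non-equilibrium points, which is incompatible with the phase-plane description in Lemma \ref{MatPolLemma} combined with $\vp_x\le 0$ on $(0,\infty)$). Once every interior value is realized by a nonconstant even profile with strict interior maximum, the identity $0\ge U_{xx}(0,t)=U_t(0,t)-f(U(0,t))$ evaluated on the entire solution, together with a Hamiltonian-type computation $\tfrac12 U_x^2 = \int_{\vp(x)}^{\vp(0)} f$ in the steady case (and an approximation/limiting argument in general), yields $f(s)>0$ for $s\in(\gamma^-,\gamma^+)$; this gives (i).

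For (ii), assume $\gamma^-<\gamma^+$. The point is to show the endpoint value $\gamma^+$ is itself realized by a \emph{constant} element of $\om(u)$, which then forces $f(\gamma^+)=0$. First I would pick $\vp^+\in\om(u)$ with $\vp^+(0)=\gamma^+$; by \eqref{vpeven} and \eqref{monotonicity2}, $\vp^+$ is even and either constant or strictly decreasing away from $0$. If it is constant we are done, since $0=\vp^+_{xx}+f(\vp^+)=f(\gamma^+)$. So suppose $\vp^+$ is nonconstant, hence $\vp^+(x)<\gamma^+$ for $x\ne 0$, with $\vp^+$ a genuine steady state (this uses the entire-solution argument exactly as in the Case (C2) proof: $U(0,t)\equiv\gamma^+$ is the maximum of $J$ over $\om(u)$ forces $U_t(0,t)\le 0$, but also $\gamma^+$ being the \emph{supremum} of $U(0,\cdot)$ over $t$ forces... — here I need to be careful, this is the delicate point). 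The intended contradiction is that a nonconstant $\vp^+$ realizing the maximal height $\gamma^+$ would have to be a ground state whose peak value $\gamma^+$ satisfies $F(\gamma^+)>F(a)$ for the relevant equilibrium $a$; then profiles $\vp\in\om(u)$ with $\vp(0)$ slightly less than $\gamma^+$ would have trajectories $\tau(\vp)$ that are \emph{not} closed orbits (they cannot be, since $J$ being an interval and (i) giving $f>0$ there rules out periodic orbits through height $<\gamma^+$ by the argument of Lemma \ref{nointersection}), yet also cannot be homoclinic or heteroclinic of the right type — the connectedness of the union of trajectories together with Lemma \ref{MatPolLemma} pins down that the only consistent picture has $\gamma^+$ an equilibrium value, i.e. $f(\gamma^+)=0$, and then the maximum principle / strong maximum principle applied to $U(\cdot,t)-\gamma^+$ (which is $\le 0$ and touches $0$ at $x=0,\ t$ arbitrary) forces $U\equiv\gamma^+$, so the constant $\gamma^+\in\om(u)$.

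I expect the main obstacle to be part (ii): specifically, ruling out the scenario where $\gamma^+$ is attained only by a nonconstant (ground-state) profile whose peak lies strictly above every equilibrium. The tool to kill this is a zero-number argument comparing $u$ with the constant $\gamma^+$ (or with a well-chosen steady state) on a large interval, mimicking the two-case analysis (Case 1 / Case 2) inside the proof of Lemma \ref{nointersection}: if $u(\cdot,t)-\gamma^+$ had a persistent sign-change structure one derives a zero-number drop contradiction using \eqref{maxu} and the eventual monotonicity \eqref{uxuniquezero} to control the boundary values on $(-k,k)$. The part (i) argument is comparatively routine once one knows every interior value is carried by a nonconstant profile, so the bulk of the work — and the only genuinely subtle step — is establishing that $\gamma^+$ forces an equilibrium, for which I would lean on the phase-plane classification of Lemma \ref{MatPolLemma}, the translated-orbit rigidity of Lemma \ref{translatedorbit}, and the strong maximum principle applied to the entire solution $U$ at its spatial maximum $x=0$.
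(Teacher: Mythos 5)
Your proposal has genuine gaps, and the central one is structural: at the stage where Lemma \ref{lemmacase2} is needed, the elements of $\omega(u)$ are \emph{not} known to be steady states --- that is essentially the conclusion of the whole theorem. Yet both halves of your plan lean on exactly that: the Hamiltonian identity $\tfrac12\vp_x^2=F(\vp(0))-F(\vp(x))$, the classification of $\tau(\vp)$ as closed/homoclinic/heteroclinic orbits via Lemma \ref{MatPolLemma}, and the reading of a hypothetical $\vp^+$ as ``a ground state'' all presuppose that $\vp$ solves \eqref{steadyeq}; for a general $\vp\in\omega(u)$, $\tau(\vp)$ is just a curve, not an orbit of \eqref{sys}, and the promised ``approximation/limiting argument in general'' is not supplied. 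A second concrete error: a constant $\vp\equiv s\in\omega(u)$ does \emph{not} force $f(s)=0$; the entire solution through it is spatially constant and solves $\dot U=f(U)$, so its value may simply drift inside $J=[\gamma^-,\gamma^+]$ (only at the endpoint $\gamma^+$ does the constraint $U(0,t)\le\gamma^+$ force $f(\gamma^+)=0$, which is how the paper finishes (ii)). Ruling out interior constants is a separate, nontrivial fact (Lemma \ref{le-consts}), whose proof in the paper \emph{uses} part (i), so your plan of excluding interior constants first and deducing (i) afterwards points the dependencies the wrong way. Finally, your route to (i) through the single entire solution $U$ cannot succeed even in principle: the inequality $U_t(0,t)\le f(U(0,t))$ with $U(0,t)\in J$ is perfectly consistent with $f\le 0$ somewhere in $(\gamma^-,\gamma^+)$ (e.g.\ $U(0,t)$ constant). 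The paper's actual argument for (i) uses the \emph{true} solution: since $\eta(t)\to 0$ and $J=[\gamma^-,\gamma^+]$, the peak value $u(\eta(t),t)$ has $\liminf=\gamma^-<s<\gamma^+=\limsup$, so it must cross $s$ upward infinitely often; but at a local maximum $u_{xx}\le 0$ gives $\frac{d}{dt}u(\eta(t),t)\le f(u(\eta(t),t))$, and $f(s)\le 0$ forbids any upward crossing of $s$ --- a short ODE-comparison contradiction that your sketch gestures at (``one expects to push this'') but never carries out, and which crucially needs the oscillation you never invoke.

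For (ii) your instinct to use a zero-number/reflection-style argument is right, but the sketch omits the ingredients that make it work, and the logical order should be reversed. The paper first proves $\gamma^+\in\omega(u)$: assuming not, compactness of $\omega(u)$ together with \eqref{vpeven}--\eqref{monotonicity2} gives $\e>0$ with $\vp(\pm1)<\gamma^+-\e$ for all $\vp\in\omega(u)$; by part (i), $f>0$ on $(\gamma^+-\e,\gamma^+)$, so one can choose a \emph{nonstationary periodic} steady state $\psi$ with $\psi(0)=s\in(\gamma^+-\e,\gamma^+)$, $\psi'(0)=0$, period $\rho>1$. Then $u(\pm\rho,t)<s=\psi(\pm\rho)$ for large $t$, so Lemma \ref{lemzero}/Remark \ref{convzero} force $u(\cdot,t)-\psi$ to have only simple zeros in $(-\rho,\rho)$ eventually; on the other hand, by the definition of $\gamma^\pm$ there is $\vp\in\omega(u)$ with $\vp(0)=s$, $\vp'(0)=0$, so $\vp-\psi$ has a multiple zero at $0$, and Lemma \ref{robustnesszero} transfers this to $u(\cdot,t_n)-\psi$ --- contradiction. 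Only then does one get $f(\gamma^+)=0$, from the spatially constant entire solution through $\gamma^+$ (if $f(\gamma^+)>0$ it would rise above $\gamma^+$, impossible). Your plan instead tries to establish $f(\gamma^+)=0$ first by phase-plane considerations applied to $\omega$-limit profiles (again illegitimately treated as steady states) and only then recover the constant $\gamma^+$ by the strong maximum principle; the comparison object (``the constant $\gamma^+$ or a well-chosen steady state'') and the mechanism producing the zero-number contradiction are left unspecified, which is precisely where the work lies.
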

(It is perhaps needless to say that in $\gamma^+\in\omega(u)$, $\ga^+$   
refers to the constant function taking the value $\gamma^+$.)

\begin{proof}[Proof of Lemma \ref{lemmacase2}]
We prove  (i) by contradiction. Assume there exists 
$s\in (\gamma^-,\gamma^+)$ with $f(s)\le 0$ (in particular,
$(\gamma^-,\gamma^+)\ne\emptyset$).   
 Since  $\eta(t)\to0$, the definition of $J$ implies that 
 \begin{equation}
   \label{eq:15}
   \liminf_{t\to\infty}u(\eta(t),t)=\gamma^-<s,\qquad 
\limsup_{t\to\infty}u(\eta(t),t)=\gamma^+>s.
 \end{equation}
Now, for all large $t$ the function $u(\codt,t)$ has 
a local maximum 
at $x=\eta(t)$. Therefore, 
 equation \eqref{eq1} gives
$$
   (u(\eta(t),t) )'=u_t(\eta(t),t) = u_{xx}(\eta(t),t) +
   f(u(\eta(t),t)) \le  f(u(\eta(t),t)).
$$
This and the assumption $f(s)\le 0$ imply, via an elementary comparison
argument for the equation $\dot \xi=f(\xi)$,  that if the relation
 $u(\eta(t),t) <0$ is valid for some $t$, then it remains valid for
 all larger $t$. This contradiction to \eqref{eq:15} proves statement (i).

For the proof of statement (ii), we assume that $\ga^-<\ga^+$. 
For a contradiction, we assume also that 
$\gamma^+\not\in\omega(u).$ Then, by relations \eqref{vpeven},
\eqref{monotonicity2}, and compactness of $\omega(u)$,  
there exists $\e$ with $0<\e<\gamma^+-\gamma^-$ 
such that for all $\vp\in\omega(u)$ one has $\vp(\pm 1)<\gamma^+-\e.$ 
By statement (i), $f>0$ on $(\gamma^+-\e,\gamma^+)$. 
Therefore, we can choose $s\in  (\gamma^+-\e,\gamma^+)$
such that the solution $\psi$ of (\ref{steadyeq}) with  
$\psi(0)=s$, $\psi'(0)=0$ is a nonstationary 
 periodic solution of (\ref{steadyeq}). (The existence of such $s$
follows from  Lemma \ref{MatPolLemma}, but one can also give more
direct arguments, see for example \cite[Lemma 3.2]{p-Ma:1d}).  
 Let $\rho>1$ be a period of $\psi.$ Then $-\rho$ is also a period of
 $\psi$ and we have
$$
\psi(\pm\rho) = s,\qquad \vp(\pm\rho)\le \vp(\pm1)<s \quad (\vp\in\omega(u)).
$$
Hence, there is $T_1>0$ such that
 for all $t>T_1$ one has $u(\pm\rho,t)<s$.
Therefore, by Lemma \ref{lemzero} and Remark \ref{convzero}, 
 $z_{(-\rho,\rho)}(u(\cdot,t)-\psi)$ is finite   for $t> T_1$ and
for all sufficiently large $t$
the function $u(\cdot,t)-\psi$ has only simple zeros in $(-\rho,\rho)$.  On the
other hand, the definition of $\ga^\pm$ (cp. 
\eqref{eq:5}, \eqref{Iinterval0}) yields  
$\vp\in\omega(u)$ with $\vp(0)=s=\psi(0)$. Since also $\vp'(0)=0$ (see
\eqref{vpeven}), 
$\vp-\psi$ has a multiple zero  
at $x=0.$ Applying Lemma \ref{robustnesszero} as in the proof of Lemma
\ref{nointersection}, we conclude that 
there exist sequences $t_n\to\infty,$ $x_n\to0$ such that 
$u(\cdot, t_n)-\psi$ has a multiple zero at $x=x_n,$ and we have a
contradiction. This contradiction proves that $\gamma^+\in\omega(u)$.

It remains to show that $f(\ga^+)=0$. If this is not true, then, by
statement (i),
$f(\ga^+)>0$.
Let $U$ be the entire solution of
(\ref{eq1}) with $U(\cdot,0)\equiv \ga^+$  and 
$U(\cdot,t)\in\omega(u)$ for all $t\in\R.$ 
Since $U_x$ solves a
linear equation \eqref{eqlin}, the identity $U_x(\cdot,0)\equiv 0$
implies that $U_x(\cdot,t)\equiv 0$ for all $t\in\R$. 
Thus $U=U(t)$ is a solution of 
 $U_t=f(U)$ and $U'(0)=f(\ga^+)>0$. Thus $U(t)>\ga^+$ for $t>0$, which
 contradicts the definition of $\ga^+$. The proof is now complete. 
\end{proof}

 \begin{lem}
   \label{le-consts}
If $\la\in(\ga^-,\ga^+)$, then  (the constant function) $\la$ is not 
contained in $\om(u)$.
 \end{lem}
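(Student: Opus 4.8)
The plan is to argue by contradiction, reducing the assertion to Lemma \ref{translatedorbit} once we know that $f(\lambda)\neq 0$.

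First I would suppose that, for some $\lambda\in(\ga^-,\ga^+)$, the constant function $\vp\equiv\lambda$ belongs to $\om(u)$. By Lemma \ref{lemmacase2}(i) we have $f(\lambda)>0$; in particular the point $(\lambda,0)\in\R^2$ is \emph{not} an equilibrium of the Hamiltonian system \eqref{sys}, since there $v_x=-f(\lambda)\neq 0$.

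Next I would exhibit a steady state whose spatial trajectory passes through $(\lambda,0)$. Let $\psi$ be the solution of \eqref{steadyeq} determined by $(\psi(0),\psi'(0))=(\lambda,0)$. Under the coercivity normalization \eqref{coercivity} all level sets of $H$, hence all orbits of \eqref{sys}, are bounded, so $\psi$ is defined on all of $\R$ and is a genuine steady state of \eqref{eq1}; moreover $\psi$ is nonconstant, because a constant solution of \eqref{steadyeq} must sit at a zero of $f$, whereas $f(\lambda)\ne 0$. By construction $(\lambda,0)\in\tau(\psi)$, that is, $\tau(\vp)=\{(\lambda,0)\}\subset\tau(\psi)$.

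Finally I would apply Lemma \ref{translatedorbit} to $\vp\in\om(u)$ and to $\psi$: it gives that $\vp$ is a shift of $\psi$. But $\vp\equiv\lambda$ is constant while $\psi$ is nonconstant, a contradiction. Hence no constant $\lambda\in(\ga^-,\ga^+)$ lies in $\om(u)$. The whole argument is short; the only points needing care are verifying the hypotheses of Lemma \ref{translatedorbit} (that $\psi$ is a globally defined, nonconstant steady state with $(\lambda,0)$ on its orbit) and, conceptually, noticing that everything hinges on $f(\lambda)\ne 0$. This is precisely the ingredient unavailable at the endpoint value $\ga^+$, where $f(\ga^+)=0$ makes $(\ga^+,0)$ an equilibrium — which is why the companion statement $\ga^+\in\om(u)$ in Lemma \ref{lemmacase2}(ii) required the longer intersection–comparison argument instead.
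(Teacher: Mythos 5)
There is a genuine gap, and it lies in your use of Lemma \ref{translatedorbit}. In your situation the inclusion $\tau(\vp)\subset\tau(\psi)$ is the trivial one: $\tau(\vp)$ is the single point $(\la,0)$, which is a turning point of the (typically \emph{periodic}) nonconstant orbit through it. Read literally, Lemma \ref{translatedorbit} would then settle the matter, but notice what this reading asserts: since your argument uses nothing about $u$ beyond boundedness (Lemma \ref{lemmacase2}(i) enters only to give $f(\la)\neq 0$), it would prove that \emph{no} bounded solution of \eqref{eq1}, with $f$ normalized as in \eqref{coercivity}, can ever have a constant $\la$ with $f(\la)\neq 0$ in its $\om$-limit set --- irrespective of the case distinction (C1)--(C3), of \eqref{uxuniquezero}, indeed of the hypothesis $u_0\in\cV$, $u_0(-\infty)\ne u_0(+\infty)$ altogether. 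That conclusion is exactly what the non-quasiconvergence examples cited in the introduction \cite{P:examples,P:unbal} rule out: there, $\om(u_0)$ contains a whole interval of constants joining two zeros of $f$, most of which are not zeros of $f$, and each such $(\la,0)$ lies on a nonconstant orbit of \eqref{sys} once $f$ is modified as in \eqref{coercivity} (a modification that does not affect the given solution or its $\om$-limit set). So the single-point, turning-point application of Lemma \ref{translatedorbit} cannot be legitimate as a blanket statement; whether a constant non-equilibrium value can be a limit profile is a genuinely dynamical question, not one decided by phase-plane bookkeeping. Consistent with this, the paper itself never invokes Lemma \ref{translatedorbit} against periodic orbits: in case (C1) it is applied to a constant $\vp$ only \emph{after} Lemma \ref{nointersection} has excluded intersections with periodic orbits (so that the candidate $\psi$ is a ground state), and everywhere a periodic steady state is the obstruction --- Lemma \ref{lemmacase2}(ii), the present lemma, Lemma \ref{lemmacase22}, Proposition \ref{propcontinuum} --- a bespoke intersection--comparison argument is run instead.

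This solution-specific structure is precisely what the paper's proof of Lemma \ref{le-consts} uses and what your proposal discards. The paper takes the entire solution $U$ through the constant $\la$ (a spatially homogeneous ODE trajectory staying in $\om(u)$), chooses $\tilde\la$ in its range so that the steady state $\psi$ with $\psi(0)=\tilde\la$, $\psi'(0)=0$ is nonconstant periodic, produces via Lemma \ref{robustnesszero} multiple zeros of $u(\cdot,t_n)-\psi$ near $x=\rho$ along a sequence $t_n\to\infty$, and then contradicts their persistence using the case-(C2) monotonicity \eqref{uxuniquezero} (which gives $u_x(\cdot,t)<0$ on $(\rho/2,2\rho)$ for large $t$) through a zero-number argument as in Lemma \ref{nointersection}. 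To repair your proof you would have to supply exactly this kind of argument; the reduction to Lemma \ref{translatedorbit} by itself does not do the work. (Your closing remark about $\ga^+$ is fine as a heuristic, but the real dividing line is not $f(\la)\neq 0$ versus $f(\ga^+)=0$; it is that excluding constants sitting on periodic orbits from $\om(u)$ requires using how \emph{this} solution intersects those periodic steady states.)
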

 
 \begin{proof}
 Assuming $\la\in \om(u)$,  let $U$ be the entire solution of
(\ref{eq1}) with $U(\cdot,0)\equiv \la$  and 
$U(\cdot,t)\in\omega(u)$ for all $t\in\R.$ 
Then, as at the end of the previous proof,  
 $U=U(t)$ is a solution of $U_t=f(U),$ $U(0)=\la$. 
By Lemma \ref{lemmacase2}(i), 
the range of the function $U$ is an interval 
on  which $f>0$. We can choose $\tilde \la$ in this interval such
that the solution $\psi$ of (\ref{steadyeq}) with $\psi(0)=\tilde\la,$ 
$\psi'(0)=0$ is a nonconstant periodic solution.  This and 
symmetries of $\psi$ (cp.  (\ref{periodicorbits}))
imply that if $\rho>0$ is the minimal period of $\psi$, then 
\begin{equation*}
  \psi\lp\frac{\rho}{2}\rp=\min\psi,\qquad\psi(\rho)= 
\psi(2\rho)=\tilde \la=\max\psi.
\end{equation*}
We still have
$\tilde \la\in \om(u)$ as $\tilde \la=U\lp\tilde t\rp$ for some 
some $\tilde t$. 

Now, as $\psi-U\lp\tilde t\rp=\psi-\tilde\la$ has a multiple zero at $x=\rho$
and $\psi-U\not\equiv 0$ ($\psi$ is nonconstant), 
Lemma \ref{robustnesszero} yields sequences $t_n\to\infty,$
$x_n\to\rho$ such that 
$u(\cdot,t_n)-\psi$ has a multiple zero at $x=x_n$ for all $n.$
On the other hand, by (\ref{uxuniquezero}), 
there exists $T_2>0$ such that for all 
$t>T_2$ one has $u_x(\cdot,t)<0$ on $\lp\frac{\rho}{2},2\rho\rp.$ 
One can now obtain a contradiction by considering 
$z_{\lp\frac{\rho}{2},2\rho\rp}(u(\cdot,t)-\psi)$
and using very similar arguments as in 
 the proof of Lemma \ref{nointersection}. 
We omit the details. 
 \end{proof}

\begin{lem}\label{lemmacase22}
If  $\la\in(\gamma^-,\gamma^+)$ and the  solution $\psi$ of
\emph{(\ref{steadyeq})} with $\psi(0)=\la$, 
$\psi'(0)=0$ is periodic, then 
there is no $\vp\in\omega(u)$ such that $\vp\leq\psi$ on $\R$. 
\end{lem}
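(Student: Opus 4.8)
The plan is to argue by contradiction: assume there is $\vp\in\omega(u)$ with $\vp\le\psi$ on $\R$. Two facts about $\psi$ come first. Since $f(\la)>0$ (Lemma \ref{lemmacase2}(i)) and $(\la,0)=(\psi(0),\psi'(0))$ is a turning point of the periodic orbit $\tau(\psi)$, we have $\la=\max_\R\psi$; and since $F$ is strictly increasing on $[\gamma^-,\la]$ (again by Lemma \ref{lemmacase2}(i)) while $F(\min\psi)=F(\la)$ by \eqref{periodicorbits} and $\min\psi<\la$, it follows that $\min\psi<\gamma^-$. Next let $U$ be the entire solution with $U(\cdot,0)=\vp$ and $U(\cdot,t)\in\omega(u)$ for all $t$ (Section \ref{invariance}); by the comparison principle $U(\cdot,t)\le\psi$ for $t\ge 0$, and since no element of $\omega(u)$ equals the nonconstant periodic $\psi$ (elements of $\omega(u)$ are monotone on $(0,\infty)$ by \eqref{vpeven}), the strong maximum principle gives $U(\cdot,t)<\psi$ on $\R$ for $t>0$. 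Replacing $\vp$ by $U(\cdot,1)$ and $U$ by $U(\cdot,\cdot+1)$, I may henceforth assume $\vp<\psi$ on $\R$ and $\vp(0)\in[\gamma^-,\la)$. This $\vp$ is nonconstant (a constant $\vp\equiv c$ would force $c=\vp(0)\ge\gamma^-$ and $c\le\inf_\R\psi=\min\psi<\gamma^-$), so by \eqref{monotonicity2} $\vp_x<0$ on $(0,\infty)$; writing $\ell:=\vp(+\infty)$, from $\vp<\psi$ and the fact that $\psi$ attains $\min\psi$ at real points while $\vp>\ell$ everywhere we get $\ell<\min\psi$.

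Now work in the phase plane. Because $\min\psi<\gamma^-\le\vp(0)<\la=\max\psi$, the point $(\vp(0),0)$ lies strictly inside the region $D$ bounded by the closed curve $\tau(\psi)$, while $(\ell,0)$ lies outside $\overline D$. The arc $\{(\vp(x),\vp_x(x)):x\ge 0\}$ is a continuous path from $(\vp(0),0)$ to its limit $(\ell,0)$, hence crosses $\partial D=\tau(\psi)$; let $x_0>0$ be the first crossing (positive since $D$ is open), where $\vp_x(x_0)<0$. Let $\Psi$ be the translate of $\psi$ with $(\Psi(x_0),\Psi_x(x_0))=(\vp(x_0),\vp_x(x_0))$; it is again a periodic solution of \eqref{steadyeq}, and $\vp-\Psi$ has a multiple zero at $x_0$. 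Taking $t_n\to\infty$ with $u(\cdot,\cdot+t_n)\to U$ in $C^1_{loc}(\R^2)$, we have $u(\cdot,\cdot+t_n)-\Psi\to U-\Psi$ in $C^1_{loc}(\R^2)$, a solution of a linear equation \eqref{eqlin} that is not identically zero (otherwise $\vp=\Psi$ would be periodic, contradicting \eqref{vpeven}) and has a multiple zero at $(x_0,0)$. Lemma \ref{robustnesszero} then gives $x_n\to x_0$ and $\tilde t_n\to\infty$ such that $u(\cdot,\tilde t_n)-\Psi$ has a multiple zero at $x_n$.

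Finally one must contradict the existence of these multiple zeros accumulating at $x_0>0$ along $\tilde t_n\to\infty$; this zero-number bookkeeping is the step I expect to be the main obstacle, and it follows the proof of Lemma \ref{nointersection}. Fix a bounded open interval $I'\ni x_0$ with endpoints a point where $\Psi$ attains $\min\Psi$ and a point where it attains $\max\Psi$ (available because $\Psi$ is periodic and strictly decreasing at $x_0$; if needed, a further small forward shift of $\vp$ makes $x_0$ large enough to allow this choice). By \eqref{uxuniquezero}, \eqref{maxu} and $\eta(t)\to 0$, for all large $t$ the function $u(\cdot,t)$ is strictly decreasing on a neighborhood of $\overline{I'}$; combined with the choice of the endpoints of $I'$, this forces $u(\cdot,t)-\Psi$ to be positive at the left endpoint and negative at the right endpoint whenever $z_{I'}(u(\cdot,t)-\Psi)>0$, exactly as in \eqref{eq:2}. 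Hence $t\mapsto z_{I'}(u(\cdot,t)-\Psi)$ is nonincreasing on any large-time interval on which it remains positive, and one runs the dichotomy of Lemma \ref{nointersection} verbatim: if this count stays positive for all large $t$, Remark \ref{convzero} forces it to be eventually constant with only simple zeros, contradicting the multiple zeros at $\tilde t_n$; if instead it vanishes at some time, then tracking the last vanishing time before a fixed $\tilde t_{n_0}$ and using \eqref{uxuniquezero} yields a contradiction with Lemma \ref{lemzero}(iii). The only delicate points are the choice of $I'$ and this second case of the bookkeeping; everything else is routine given Lemma \ref{lemmacase2}, the monotonicity relations \eqref{vpeven}, \eqref{monotonicity2}, \eqref{uxuniquezero}, \eqref{maxu}, and the standard zero-number tools.
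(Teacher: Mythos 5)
Your reduction steps are correct and partly parallel to the paper: the identities $\max\psi=\la$, $\min\psi<\gamma^-$, the strictification $\vp<\psi$ via the entire solution and the strong comparison principle, the phase-plane crossing at some $x_0>0$, the translate $\Psi$ with a multiple zero of $\vp-\Psi$ at $x_0$, and the application of Lemma \ref{robustnesszero} are all sound. The genuine gap is exactly at the point you flag and then wave away: the zero-number bookkeeping modeled on Lemma \ref{nointersection} requires an interval $I'\ni x_0$ whose left endpoint is a global minimum point of $\Psi$, whose right endpoint is a global maximum point of $\Psi$, and on (a neighborhood of) which $u(\cdot,t)$ is eventually strictly monotone; in case (C2) the monotone region excludes a neighborhood of $x=0$, because $u(\cdot,t)$ has its critical point $\eta(t)\to 0$ there, and without monotonicity on all of $I'$ the endpoint sign argument \eqref{eq:2} collapses. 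Since $\Psi$ is strictly decreasing at $x_0$, the nearest minimum point of $\Psi$ to the left of $x_0$ lies at distance between $\rho/2$ and $\rho$ from $x_0$, so if the first crossing happens with $x_0\le\rho/2$ (nothing in your construction rules this out: $x_0$ is dictated by $\vp$), there is no admissible left endpoint in $(0,x_0)$. Your fallback, ``a further small forward shift of $\vp$,'' is not a legitimate operation: $\om(u)$ is not translation invariant---by \eqref{vpeven} every element is even about $x=0$ with its maximum there---so a spatially shifted $\vp$ is no longer in $\om(u)$ and is not approximated by $u(\cdot,t_n)$, which is what the passage to multiple zeros of $u(\cdot,\tilde t_n)-\Psi$ and the subsequent zero-number dichotomy rest on; shifting $\Psi$ instead is also not free, since $\Psi$ is pinned by the tangency with $\vp$ at $x_0$. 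As written, the final contradiction therefore does not close.

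For comparison, the paper's proof of this lemma avoids zero-number arguments entirely and is not affected by the location of any crossing point: it considers the set $K$ of shifts $\xi$ for which some element of $\om(u)$ lies below $\psi(\cdot-\xi)$ (see \eqref{eq:8}), shows $K$ is closed by compactness of $\om(u)$ and open by the strong comparison step \eqref{eq:9} combined with \eqref{vpeven} and the periodicity of $\psi$, hence $K=\R$; choosing the shift that places a minimum point of $\psi$ at $x=0$ then forces some $\vp\in\om(u)$ to satisfy $\vp(0)\le\min\psi<\gamma^-$, contradicting $\vp(0)\ge\gamma^-$. If you want to salvage your route you need a new idea for the case when $x_0$ is within half a period of the origin; otherwise the sliding argument is the cleaner way through.
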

\begin{proof}
By Lemma \ref{lemmacase2}, the periodic solution 
 $\psi$ is nonconstant.
By
Lemma \ref{lemmacase2}(i)  
and (\ref{periodicorbits}), $\psi$ 
 satisfies $\max\psi=\la$ and $\min\psi<\gamma^-.$
Assume for a contradiction that there exists $\vp\in\omega(u)$ such
that $\vp\leq\psi.$ Consider the following set 
\begin{equation}
  \label{eq:8}
  K:=\{\xi\in\R: 
\text{there exists $\vp\in\omega(u)$ such that $\vp\leq \psi(\cdot-\xi)$}\}.
\end{equation}
By our assumption, $K$ contains $\xi=0$. By compactness of $\om(u)$ in
$L^\infty_{loc}(\R)$, $K$ is closed. We show
that $K$  is also open, thereby proving that actually $K=\R$. 

Fix any $\xi\in K$ and take $\vp\in \om(u)$ as in \eqref{eq:8}. 
Let $U$ be the entire solution of (\ref{eq1})  
with $U(\cdot,0)=\vp$ and $U(\cdot,t)\in\omega(u)$ 
for all $t.$ Since $ \psi(\cdot-\xi)$ is a steady state  
of (\ref{eq1}), the strong comparison argument gives 
\begin{equation}
  \label{eq:9}
  \tilde \vp:=U(\cdot,1)<\psi(\cdot-\xi).
\end{equation}
Since $\tilde \vp\in\om(u)$, relations \eqref{vpeven} and 
the periodicity of $\psi$ imply that  relation \eqref{eq:9} 
remains valid if $\xi$ is replaced by $\tilde \xi$ 
with $\tilde\xi \approx \xi$. 
This shows the openness of $K$, hence $K=\R$.

Take now $\xi$ such that   $\psi(-\xi)=\min \psi$.  For some
 $\vp\in \om(u)$ one has $\vp\le \psi(\codt-\xi)$. In
particular, 
$$\vp(0)\le \psi(-\xi)=\min \psi<\gamma^-,
$$  
which is a contradiction (cp. \eqref{eq:5}, \eqref{Iinterval0}).
This contradiction completes the proof.
\end{proof}

As a direct corollary of
Lemma \ref{lemmacase22}(i), (\ref{monotonicity2}), 
and the compactness of $\omega(u)$ in $C^1_{loc}(\R)$, we obtain the
following result:

\begin{cor}\label{corcase2}
If  $\la\in(\gamma^-,\gamma^+)$, then for each $\rho>0$ there exist
 $\kappa>0$ and $\e_1\in (0,1),$ depending on $\rho,$ such that
for any  $\vp\in\omega(u)$ with 
$\displaystyle \lb\vp(0)-\la \rb\leq\kappa$ one has
 $ \vp'<-\e_1$ 
 on $\displaystyle\lp\frac{\rho}{2},\rho\rp.$
\end{cor}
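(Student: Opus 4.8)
The plan is to argue by contradiction, exploiting only the dichotomy \eqref{monotonicity2}, the ruling-out of constants in Lemma~\ref{le-consts}, and the compactness of $\omega(u)$ in $C^1_{loc}(\R)$. Fix $\lambda\in(\gamma^-,\gamma^+)$ and $\rho>0$, and suppose the assertion of Corollary~\ref{corcase2} fails for this particular $\rho$. Then, taking $\kappa=\e_1=1/n$ for each $n$, there is $\vp_n\in\omega(u)$ with $|\vp_n(0)-\lambda|\le 1/n$ together with a point $x_n\in(\rho/2,\rho)$ at which $\vp_n'(x_n)\ge -1/n$.

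By the compactness of $\omega(u)$ in $C^1_{loc}(\R)$, after passing to a subsequence we may assume $\vp_n\to\vp$ in $C^1_{loc}(\R)$ for some $\vp\in\omega(u)$, and $x_n\to x_*\in[\rho/2,\rho]$. Passing to the limit in the two relations above gives $\vp(0)=\lambda$ and $\vp'(x_*)=\lim_n\vp_n'(x_n)\ge 0$. On the other hand, since $\vp(0)=\lambda\in(\gamma^-,\gamma^+)$, Lemma~\ref{le-consts} shows that $\vp$ is not a constant function, so $\vp_x\not\equiv 0$; hence \eqref{monotonicity2} forces $\vp_x(x)<0$ for all $x>0$. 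As $x_*\ge\rho/2>0$, this yields $\vp'(x_*)<0$, contradicting $\vp'(x_*)\ge 0$. This contradiction proves the corollary.

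The argument is entirely routine, and I do not expect any serious obstacle. The only two points deserving a word of care are: (a) the limit function $\vp$ satisfies $\vp(0)=\lambda$ \emph{exactly}, so that Lemma~\ref{le-consts} applies and rules out the trivial alternative $\vp_x\equiv 0$ in the dichotomy \eqref{monotonicity2} (this is why the statement is restricted to $\lambda$ in the \emph{open} interval); and (b) the failure of ``$\vp'<-\e_1$ on $(\rho/2,\rho)$'' is literally the existence of some $x\in(\rho/2,\rho)$ with $\vp'(x)\ge-\e_1$, which is exactly what is needed to produce the points $x_n$, with the compactness of $[\rho/2,\rho]$ supplying the accumulation point $x_*$. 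Everything else is a direct application of the $C^1_{loc}$-convergence of the $\vp_n$.
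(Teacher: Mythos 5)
Your proof is correct and is precisely the routine compactness argument the paper intends when it presents the statement as a ``direct corollary'' of \eqref{monotonicity2}, the exclusion of constants in $(\gamma^-,\gamma^+)$ from $\omega(u)$ (Lemma \ref{le-consts}; the paper's citation of ``Lemma \ref{lemmacase22}(i)'' there appears to be a mislabel), and the compactness of $\omega(u)$ in $C^1_{loc}(\R)$. No gaps: the limit function satisfies $\vp(0)=\la$ exactly, so it cannot be constant, and \eqref{monotonicity2} then gives $\vp_x<0$ on $(0,\infty)$, contradicting $\vp'(x_*)\ge 0$.
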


\begin{lem}\label{lemmacase23}
If $\la\in(\gamma^-,\gamma^+)$ and  the solution $\psi$ of
\emph{(\ref{steadyeq})} with $\psi(0)=\la$, 
$\psi'(0)=0$ is periodic,
 then there exist $T>0,$ $\e>0$ with the following property.
Denoting by 
$\rho>0$ is the minimal period of $\psi$, we have
\begin{equation}
  \label{eq:7}
 z_{(-\rho,\rho)}(u(\cdot,t)-\psi)\le 2
\end{equation}
whenever  $t>T$ is such that $\displaystyle
u(\pm\rho,t)\in\lp\psi(0)-\e,\psi(0)\rp$. 
\end{lem}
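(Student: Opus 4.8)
The plan is to estimate $z_{(-\rho,\rho)}(v(\codt,t))$, where $v(\codt,t):=u(\codt,t)-\psi$, by counting the zeros of $v(\codt,t)$ separately on $[-\rho/2,\rho/2]$, on $(\rho/2,\rho)$, and on $(-\rho,-\rho/2)$. First I record the shape of the two functions. Fix an integer $k>\rho$. By \eqref{uxuniquezero} and \eqref{maxu}, for $t$ past some $T(I_k)$ the only critical point of $u(\codt,t)$ in $(-k,k)$ is $\eta(t)$, a nondegenerate maximum, so $u_x(\codt,t)>0$ on $(-k,\eta(t))$ and $u_x(\codt,t)<0$ on $(\eta(t),k)$; since $\eta(t)\to0$, after enlarging the threshold I may also assume $|\eta(t)|<\rho/2$, whence $u(\codt,t)$ is strictly increasing on $[-\rho,-\rho/2]$ and strictly decreasing on $[\rho/2,\rho]$. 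On the other side, $f(\la)>0$ by Lemma \ref{lemmacase2}(i), so $\psi''(0)=-f(\la)<0$, $\psi$ is nonconstant, it attains its maximum $\la$ at $0$ (hence at $\pm\rho$) and, writing $\mu:=\min\psi<\la$, its minimum $\mu$ at $\pm\rho/2$, and it is even, strictly decreasing on $(0,\rho/2)$, strictly increasing on $(\rho/2,\rho)$. Given this, the outer intervals are trivial: on $(\rho/2,\rho)$ one has $\partial_xv=u_x-\psi'<0$ (as $u_x<0$ there while $\psi'\ge0$), so $v(\codt,t)$ is strictly decreasing on $[\rho/2,\rho]$ and has at most one zero in $(\rho/2,\rho)$; symmetrically $v(\codt,t)$ is strictly increasing on $[-\rho,-\rho/2]$ and has at most one zero there. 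So everything reduces to the following claim, which I expect to be the main obstacle: there exist $\e>0$ and $T>0$ such that whenever $t>T$ and $u(\pm\rho,t)\in(\la-\e,\la)$, one has $u(x,t)>\psi(x)$ for all $x\in[-\rho/2,\rho/2]$.

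I would prove this claim by contradiction, via a compactness/entire-solution argument. If it failed, there would be $t_n\to\infty$ with $u(\pm\rho,t_n)\to\la$ and $x_n\to x^\ast\in[-\rho/2,\rho/2]$ such that $u(x_n,t_n)\le\psi(x_n)$. Passing to a subsequence so that $u(\codt,\codt+t_n)\to U$ in $C^1_{loc}(\R^2)$ for an entire solution $U$ with $U(\codt,s)\in\om(u)$ for all $s\in\R$, and setting $\vp:=U(\codt,0)\in\om(u)$, we get $\vp(\pm\rho)=\la$ and $\vp(x^\ast)\le\psi(x^\ast)$. By \eqref{vpeven} and \eqref{monotonicity2}, $\vp$ is even, $\max_\R\vp=\vp(0)$, and $\vp$ is nonincreasing on $(0,\infty)$; hence $\vp(0)\ge\vp(\rho)=\la$ and $\la=\vp(\rho)\le\vp(x)\le\vp(0)$ for $x\in[0,\rho]$. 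If $x^\ast\ne0$, then by evenness we may take $x^\ast\in(0,\rho/2]$, and $\vp(x^\ast)\ge\vp(\rho)=\la>\psi(x^\ast)$ (as $\psi<\la$ on $(0,\rho/2]$), contradicting $\vp(x^\ast)\le\psi(x^\ast)$. So $x^\ast=0$, giving $\vp(0)\le\psi(0)=\la$, hence $\vp(0)=\la$ and then $\vp\equiv\la$ on $[0,\rho]$; by \eqref{monotonicity2} this forces $\vp_x\equiv0$ on $(0,\infty)$, so by evenness $\vp$ is the constant function $\la$. Since $\la\in(\gamma^-,\gamma^+)$, this contradicts Lemma \ref{le-consts}, proving the claim.

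To finish, take $T$ to be the largest of $T(I_k)$, the time past which $|\eta(t)|<\rho/2$, and the time produced by the claim, and let $\e$ be the constant from the claim. For $t>T$ with $u(\pm\rho,t)\in(\la-\e,\la)$, the claim gives $v(\codt,t)>0$ on $[-\rho/2,\rho/2]$, so $v(\codt,t)$ has no zero there (in particular $v(\pm\rho/2,t)>0$); together with the two ``at most one zero'' estimates on $(\rho/2,\rho)$ and $(-\rho,-\rho/2)$ this yields $z_{(-\rho,\rho)}(u(\codt,t)-\psi)\le2$, as required. (In fact both outer zeros exist by the intermediate value theorem, since $v(\rho,t)=u(\rho,t)-\la<0<v(\rho/2,t)$ and $v(-\rho,t)<0<v(-\rho/2,t)$, so the value $2$ is attained.) The delicate point is the claim: one must exclude that $u(\codt,t)$ dips below $\psi$ somewhere in the central window while staying just under $\la=\psi(0)$ at $x=\pm\rho$, and the only obstruction to this is that the associated limit profile would be forced to be the constant steady state $\la$, which is forbidden.
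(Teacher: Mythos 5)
Your proof is correct, and it shares the paper's overall skeleton — establish $u(\cdot,t)>\psi$ on the central interval $[-\rho/2,\rho/2]$, then use the spatial monotonicity of $u(\cdot,t)-\psi$ on $(-\rho,-\rho/2)$ and $(\rho/2,\rho)$ to allow at most one zero in each outer interval — but you prove the central positivity by a genuinely different mechanism. The paper argues quantitatively: it invokes Corollary \ref{corcase2} (uniform negative slope of limit profiles on $(\rho/2,\rho)$, itself a consequence of Lemma \ref{lemmacase22}) to show that any $\vp\in\om(u)$ with $\vp(\rho)$ close to $\psi(0)$ must satisfy $\vp(0)>\psi(0)+\e_2$ for an explicit margin $\e_2$, and then transfers this to $u(\cdot,t)$ for large $t$ via the attraction property \eqref{eq:11}, with the constants $\e_1,\e_2,\de$ and the bound on $\|u_x\|_{L^\infty}$ chosen so the estimate propagates from $x=0$ outward to $[-\rho/2,\rho/2]$. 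You instead run a soft compactness argument: negating the uniform claim produces times $t_n\to\infty$ with $u(\pm\rho,t_n)\to\la$ and points $x_n\to x^\ast\in[-\rho/2,\rho/2]$ where $u\le\psi$; passing to an entire solution yields $\vp\in\om(u)$ with $\vp(\pm\rho)=\la$ and $\vp(x^\ast)\le\psi(x^\ast)$, and the shape properties \eqref{vpeven}, \eqref{monotonicity2} force either the impossible inequality $\la\le\vp(x^\ast)\le\psi(x^\ast)<\la$ (if $x^\ast\ne0$) or $\vp\equiv\la$, which is excluded by Lemma \ref{le-consts} since $\la\in(\ga^-,\ga^+)$. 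Your route is shorter and bypasses Corollary \ref{corcase2} and the explicit constants entirely (it leans on Lemma \ref{le-consts}, which precedes this lemma and is independent of Lemma \ref{lemmacase22}, so there is no circularity); what it gives up is only the explicit, constructive choice of $\e$ and $T$, which the statement does not require. The quantifier handling ($\e=1/n$, $T=n$ in the negation), the transfer of the inequalities to the limit profile, and the case split at $x^\ast=0$ are all handled correctly, so I see no gap.
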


\begin{proof}
As in the proof of Lemma \ref{lemmacase22}, we have
 $\la=\psi(0)=\psi(\rho)=\max\psi$ and $\psi(\rho/2)=\min \psi$. 
Also recall that $u_x$ is uniformly bounded
 for $t>1.$  

With
$\kappa>0$ and $\e_1\in (0,1)$  as in Corollary \ref{corcase2}, we
define the following positive quantities:
 \begin{equation*}
  \e_2=\min\lp\frac{\kappa}{2},\frac{\rho\e_1}{8}\rp,\qquad
  \d=\frac{\e_2}{\lV u_x\rV_{L^\infty(\R\times(1,\infty))}+1}. 
 \end{equation*}
Note that $\de <\rho/8$.  In particular, $\psi(\de)<\psi(0)$.  
We will show that the conclusion of Lemma \ref{lemmacase23} is valid with
$$
\e:=\min\lp\psi(0)-\psi(\d),\e_1,\e_2\rp.
$$

First, we claim that for any 
$\vp\in \om(u)$ with $\varphi(\rho)\in [\psi(0)-\e,\psi(0)]$ one has 
$\vp(0)>\psi(0)+\e_2$. Indeed, if not, then
$ \vp(0)\le \psi(0)+\e_2<\psi(0)+\kappa$. Since also 
$\vp(\rho)\ge\psi(0)-\e>\psi(0)-\kappa$, using first
Corollary \ref{corcase2} with the mean value theorem, and then 
\eqref{monotonicity2}, we obtain  
\begin{equation*}
  \vp(\rho)\le \vp\lp\frac{\rho}2\rp-\e_1\frac{\rho}2\le
  \vp(0)-\e_1\frac{\rho}2 \le \psi(0)+\e_2-\e_1\frac{\rho}2. 
\end{equation*}
However, since $\e_2-\e_1\rho/2<-\e_2$ and $-\e_2\le- \e$, we have a
contradiction to the assumption $\varphi(\rho)\in [\psi(0)-\e,\psi(0)]$.
Thus, our claim is true.

In view of compactness of $\om(u)$ and \eqref{eq:11},
the above claim implies that 
there exists $T>1$  such that 
if $t>T$ and $u(\pm\rho,t)\in\lp\psi(0)-\e,\psi(0)\rp$, then 
$u(0,t)>\psi(0)+\e_2.$ Using our definition of $\d$ and the mean value
theorem, we infer from   
 $u(0,t)>\psi(0)+\e_2$ that $u(\cdot,t)>\psi(0)\ge\psi$ on $(-\d,\d).$ 

Next, we make $T$ larger, if necessary, so as to guarantee that if 
$t>T$ we have $u_x(\cdot,t)>0$ on $[-\rho,-\d]$ and $u_x(\cdot,t)<0$ 
on $[\d,\rho]$ (cp.  \eqref{uxuniquezero}, \eqref{maxu}).
 Thus, if $t>T$ 
and $u(\pm\rho,t)\in\lp\psi(0)-\e,\psi(0)\rp$, then for any 
$x\in [\de,{\rho}/{2}]$ we have
\begin{equation*}
  u(x,t)>u(\rho,t)>\psi(0)-\e>\psi(\de)\ge \psi(x),
\end{equation*}
where we have used the definition of $\e$ and the monotonicity of 
$\psi$ in $(0,\rho/2)$. Similarly one shows that $u(\cdot,t)>\psi$ 
on $[-{\rho}/{2},-\de]$. Combining these estimates with the previous
one, we conclude that
$u(\cdot,t)-\psi>0$ on
$[-{\rho}/{2},{\rho}/{2}]$
whenever $t>T$ and $u(\pm\rho,t)\in\lp\psi(0)-\e,\psi(0)\rp$.
Since the function  $u(\cdot,t)-\psi$ is increasing on 
$\lp-\rho,-\frac{\rho}{2}\rp$ and decreasing
on  $\lp\frac{\rho}{2},\rho\rp$, it can have at most one zero 
in each of these intervals. Thus, in total, the function
$u(\cdot,t)-\psi$ can have at most two zeros in $[-\rho,\rho]$
when $t>T$ and $u(\pm\rho,t)\in\lp\psi(0)-\e,\psi(0)\rp$), as
asserted in the lemma   (in fact,
the conclusion holds with the equality sign in \eqref{eq:7},
but this is of no significance here).   
\end{proof}

We can now complete the proof of Proposition \ref{propcontinuum}

\begin{proof}[Proof of Proposition \ref{propcontinuum}]
  We need to show that $\gamma^-<\gamma^+$. Going by contradiction, we
  assume that  $\gamma^-<\gamma^+$.
As already noted in the proof of Lemma \ref{lemmacase2}, we can then 
choose $\la \in (\gamma^-,\gamma^+)$ such that the solution 
 $\psi$ of (\ref{steadyeq}) with $\psi(0)=\la$, $\psi'(0)=0$ is
 nonconstant and periodic, and $\max \psi=\la<\ga^+$. 
Let $\rho>0$ be the minimal period of
 $\psi$ and let $\e$, $T$ be as in Lemma \ref{lemmacase23}. 
Making $\e>0$ smaller, with no effect on the conclusion of 
Lemma \ref{lemmacase23}, we may assume that 
$ \la-\e>\ga^-$.
Also, in view of \eqref{vpeven}, making $T$ larger, if necessary, 
we may assume that 
\begin{equation}
  \label{eq:12}
  |u(-\rho,t)-u(\rho,t)|<\frac{\e}{2}\quad (t>T). 
\end{equation}

We next pick 
$s\in (\gamma^-,\la-\e)$. Then there is 
$\vp\in\omega(u_0)$ with $\vp(0)=s$. 
Lemma \ref{lemmacase2} rules out the possibility that $\vp\le \psi$ in
$(-\rho,\rho)$. Therefore, using the evenness and periodicity of 
$\psi$ in conjunction with \eqref{vpeven}, \eqref{monotonicity2}, 
one shows easily that $ \psi-\vp$ has at least 4 zeros in 
$(-\rho,\rho)$.  Let now  $U$ be the  entire solution of (\ref{eq1})  
with $U(\cdot,0)=\vp$ and $U(\cdot,t)\in\omega(u)$ for all $t\in\R$.
For $t\approx 0$, we have 
$$
U(\pm \rho,t)\approx \vp(\pm\rho)<\varphi(0)=s<\psi(0)-\e=\psi(\pm\rho)-\e.
$$ 
Therefore, an application of  Lemma \ref{lemzero} shows that
arbitrarily close to 0 there is 
$t<0$ such that $z_{(-\rho,\rho)}(\psi -U(\cdot,t))\ge 4$ and all
zeros of $\psi -U(\pm \rho,t)$ in $(-\rho,\rho)$ are simple.
Replacing $\vp$ by $U(\cdot,t)$ for such $t$, 
we have thus found an element $\vp\in\omega(u_0)$ 
such that 
$$
\vp (\pm\rho)<\la-\e=\psi(0)-\e
$$ and
$ \psi-\vp$ has at least 4 simple zeros
in $(-\rho,\rho)$. 

Since $\ga^+$, $\varphi$  are elements of $\om(u_0)$, we can
approximate them arbitrarily closely in $C^1_{loc}(\R)$ by
$u(\cdot,T_1)$, $u(\cdot,T_2)$ with $T_2>T_1>T$. 
In particular, we can choose $T_2>T_1>T$ such that 
\begin{equation}
  \label{eq:13}
  u(\codt,T_1)>\psi\text{ on $[-\rho,\rho]$}
\end{equation}
and
\begin{equation}
  \label{eq:14}
  z_{(-\rho,\rho)}(\psi -u(\cdot,T_2))\ge 4,\quad  u(\pm\rho,T_2)<\psi(0)-\e.
\end{equation}
Denote
\begin{equation*}
  \tau:=\inf\{s\in (T_1,T_2]: u(\pm\rho,t)<\psi(0)-\frac{\e}2\quad
  (s\le t\le T_2)\}.
\end{equation*}
By \eqref{eq:14}, $\tau$ is a well defined element of $[T_1,T_2)$. By  
\eqref{eq:13},  $\tau>T_1$. Therefore, at least one of the values 
$u(\pm\rho,\tau)$ is equal to $\psi(0)-{\e}/2$ and the relations
$\tau>T_1\ge T$ and \eqref{eq:12} consequently give
\begin{equation*}
  u(\pm\rho,\tau) \in (\psi(0)-\e,\psi(0)).
\end{equation*}
It now follows from Lemma \ref{lemmacase23} that
\begin{equation}
  \label{eq:71}
 z_{(-\rho,\rho)}(u(\cdot,\tau)-\psi)\le 2.
\end{equation}
Since  $u(\pm\rho,t)<\psi(0)=\psi(\rho)$ on $[\tau,T_2]$
(see the the definition of $\tau$),  the monotonicity of the zero
number gives 
\begin{equation*}
   z_{(-\rho,\rho)}(u(\cdot,T_2)-\psi)\le 2,
\end{equation*}
in contradiction to \eqref{eq:14}. This contradiction shows that
$\ga^-<\ga^+$ is impossible, which completes the proof.

\end{proof}

\subsection{Case (C3): two or more limit critical points}
In this last case, we assume that 
 there exist two $C^1$ functions $\eta_1(t),$ $\eta_2(t)$ 
with $\eta_1(t)<\eta_2(t)$ and
 $\eta_i(t)\underset{t\to\infty}{\longrightarrow}\eta_i^\infty,$
$i=1,2,$ such that  and for all $t$
large enough one has
\begin{equation}\label{uxtwozeros}
 u_x(\eta_1(t),t)=u_x(\eta_2(t),t)=0.
\end{equation}
In view of Proposition \ref{loczero}, 
$\eta_1(t)<\eta_2(t)$ can be selected such that they are 
 two successive critical points  of $u(\cdot,t)$,
one of them  a local minimum point, the other one a
local maximum point. 

\begin{lem}\label{propperiodic}
Set  $\xi:=\eta_2^\infty-\eta_1^\infty$. 
If $\xi>0$, then each function  $\vp\in\omega(u)$ is  
$2\xi-$periodic.
\end{lem}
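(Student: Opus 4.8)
The plan is to exploit the two limit critical points $\eta_1^\infty < \eta_2^\infty$ together with Lemma \ref{le-conv0} applied to the reflection points $\la_1 := \eta_1^\infty$ and $\la_2 := \eta_2^\infty$. By Proposition \ref{loczero}, $\eta_i(t)\to\eta_i^\infty$, and since $\eta_i(t)$ is a zero of $u_x(\cdot,t)$, we have $u_x(\eta_i^\infty,\cdot)$ behaving asymptotically like $u_x(\eta_i(t),t)=0$; more precisely, arguing as in Lemma \ref{le-conv0}, one gets $u_x(\eta_i^\infty,t)\to 0$ as $t\to\infty$ (using that $\eta_i$ is a $C^1$ simple-zero curve and $u_{xx}$ is bounded). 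Hence Lemma \ref{le-conv0} gives $V_{\la_i}u(\cdot,t)\to 0$ in $C^1_{loc}(\R)$ for $i=1,2$. Translating this to $\om(u)$: every $\vp\in\om(u)$ satisfies $V_{\la_1}\vp \equiv 0$ and $V_{\la_2}\vp\equiv 0$, i.e.\ $\vp$ is symmetric about both $x=\la_1$ and $x=\la_2$.

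The key algebraic step is then purely group-theoretic: a function symmetric about two distinct points $\la_1\ne\la_2$ with $\la_2-\la_1=\xi$ is invariant under the reflections $x\mapsto 2\la_1-x$ and $x\mapsto 2\la_2 - x$, whose composition is the translation $x\mapsto x + 2(\la_2-\la_1) = x+2\xi$. Therefore $\vp(x+2\xi)=\vp(x)$ for all $x\in\R$, i.e.\ $\vp$ is $2\xi$-periodic, which is exactly the claim. I would first carefully justify $V_{\la_i}u(\cdot,t)\to 0$ for $i=1,2$ (citing or mildly adapting Lemma \ref{le-conv0}, noting that $\eta_i^\infty$ need not literally be one of the $x_j^\infty$ produced there for a fixed bounded interval, but the same argument with the entire-solution/multiple-zero trick applies verbatim), then pass to the limit along any sequence $t_n\to\infty$ realizing a given $\vp\in\om(u)$, and finally run the reflection-composition computation.

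The main obstacle I anticipate is the first step: ensuring that $V_{\la_i} u(\cdot,t)\to 0$ rather than merely that $\partial_x V_{\la_i}u(\la_i,t)\to 0$. The subtlety is that a priori the spatial zero $x=\la_i$ of $V_{\la_i}U$ is multiple for the limiting entire solution $U$, and one must upgrade ``multiple zero for all $t$'' to ``$V_{\la_i}U\equiv 0$'' via Lemma \ref{lemzero}(iii) — this needs that $z(V_{\la_i}U(\cdot,t))$ is finite, which follows because $V_{\la_i}U$ inherits the nonzero limits $\pm(\theta_+-\theta_-)$ at $x=\pm\infty$ from the discussion preceding Proposition \ref{loczero} (here $\theta_\pm$ are the limits of $U$, which by invariance of $\om(u)$ and Lemma \ref{valueinfty} are constant in $t$ along $\om(u)$ and still distinct since $\al\ne\be$ forces $\theta_-(t)\ne\theta_+(t)$ for all $t$ by backward uniqueness for the ODE). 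Once finiteness of the zero number is in hand, a finite zero number that cannot drop infinitely often but would have to drop at every $t$ if $x=\la_i$ were a genuine multiple zero forces $V_{\la_i}U\equiv 0$, exactly as in the last lines of Lemma \ref{le-conv0}. Everything after that is the elementary symmetry argument.
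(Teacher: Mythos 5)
Your proof is correct and is essentially the paper's own argument: Lemma \ref{le-conv0} applied at $\la=\eta_1^\infty$ and $\la=\eta_2^\infty$ shows every $\vp\in\om(u)$ is even about both points, and composing the two reflections gives invariance under translation by $2(\eta_2^\infty-\eta_1^\infty)=2\xi$. The only comment is that your anticipated obstacle is not an issue: in case (C3) the points $\eta_i^\infty$ are exactly limits of zero curves of $u_x$ in the intervals $I_k$, hence are among the $x_j^\infty$ of Proposition \ref{loczero}, so Lemma \ref{le-conv0} applies verbatim (which is fortunate, since your fallback claim that the entire solution $U$ inherits the nonzero limits $\pm(\theta_+-\theta_-)$ at $x=\pm\infty$ does not follow from the merely locally uniform convergence defining $\om(u)$).
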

\begin{proof}
  Lemma \ref{le-conv0} implies that each function $\vp\in\omega(u)$ is
  even about each of the two distinct points $\eta_1^\infty$,
  $\eta_2^\infty$. Therefore it is also even about the points 
$2\eta_1^\infty-\eta_2^\infty$, and $2\eta_2^\infty-\eta_1^\infty$. 
Repeating such reflections arguments one obtains the $\xi$-periodicity
easily. 
\end{proof}

\begin{lem}
  \label{cor-const}
Each $\vp\in \om(u_0)$ is a constant function.
\end{lem}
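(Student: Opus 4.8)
The plan is to show that the periodicity forced by Lemma \ref{propperiodic} is incompatible with membership in $\om(u_0)$ unless the period degenerates, i.e.\ unless $\vp$ is constant. First I would dispose of the case $\xi:=\eta_2^\infty-\eta_1^\infty=0$. In that situation $\eta_1^\infty=\eta_2^\infty$; after a shift we may take this common value to be $0$. By Lemma \ref{le-conv0}, $V_0 u(\cdot,t)\to 0$ in $C^1_{loc}(\R)$, so every $\vp\in\om(u_0)$ is even about $0$, hence has $\vp'(0)=0$. Moreover, $\eta_1(t)$ and $\eta_2(t)$ are two \emph{successive} critical points of $u(\cdot,t)$, one a local max and one a local min, that both converge to $0$; passing to an entire solution $U$ with $U(\cdot,0)=\vp$ and using the implicit function theorem together with Lemma \ref{lemzero} applied to $u_x$ on a fixed small interval $(-k,k)$, one sees that $u_x(\cdot,t)$ would have a multiple zero near $0$ for a sequence of times $\to\infty$, contradicting \eqref{uxneqzero} (applied with $\la=0$). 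Hence $\xi>0$ and Lemma \ref{propperiodic} applies: every $\vp\in\om(u_0)$ is $2\xi$-periodic.

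Now suppose, for a contradiction, that some $\vp\in\om(u_0)$ is nonconstant; then by $2\xi$-periodicity and the description of bounded orbits of \eqref{sys}, $\tau(\vp)$ lies on a nonstationary periodic orbit $\MO$ of \eqref{sys}, so $\vp$ is itself a nonconstant periodic solution of \eqref{steadyeq}. The key structural fact I would extract is that $\om(u_0)$, being connected in $C^1_{loc}(\R)$, maps into a connected subset of the set of periodic orbits of \eqref{sys} together with, possibly, constant equilibria. I would then look at the minimal periods. By Lemma \ref{lemperiods}, if a sequence of periodic orbits in $\om(u_0)$ approaches a point of $\R^2\sm\MP$ (a point on a homoclinic or heteroclinic orbit, or on a separatrix structure), the periods blow up; but all elements of $\om(u_0)$ are $2\xi$-periodic as \emph{functions on $\R$}, and the minimal period of $\vp$ must divide $2\xi$, so the minimal periods are bounded above by $2\xi$. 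This uniform bound, via Lemma \ref{lemperiods}, forces $\om(u_0)$ to stay uniformly away from $\R^2\sm\MP$; combined with the fact that the constant equilibria lie on the boundary between the periodic region and $\R^2\sm\MP$, connectedness of $\bigcup_{\vp\in\om(u_0)}\tau(\vp)$ then confines this union to a single connected open set $\Sigma_0$ of periodic orbits all of whose minimal periods divide $2\xi$.

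To finish I would derive a contradiction with the zero-number machinery exactly as in Lemmas \ref{nointersection} and \ref{lemmacase22}. Fix one nonconstant periodic $\psi\in\om(u_0)$ with minimal period $\rho$ dividing $2\xi$, with $\psi'(0)=0$ and $\psi(0)=\max\psi$ (using evenness of elements of $\om(u_0)$ about $\eta_1^\infty$, after a shift). Choose another $\tilde\vp\in\om(u_0)$ on a \emph{different} periodic orbit in $\Sigma_0$ — possible since $\ga^-<\ga^+$ would be needed, so here instead I use that if all of $\om(u_0)$ sat on the \emph{single} orbit $\MO$ then Lemma \ref{translatedorbit} would force every element to be a shift of $\psi$, and the even symmetry about $\eta_1^\infty$ and about $\eta_2^\infty$ with $\eta_1^\infty\ne\eta_2^\infty$ pins the shift, so $\om(u_0)=\{\psi\}$; but then $u(\cdot,t)\to\psi$, and applying Lemma \ref{lemzero} to $u(\cdot,t)-\psi$ on an interval between consecutive maxima of $\psi$, using that $\psi-\psi(\cdot-\text{shift})$ has multiple zeros, yields a zero-number contradiction with \eqref{uxtwozeros} and the fact that $u$ has exactly the two tracked critical points near each of finitely many locations. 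The cleanest route, which I expect to be the main obstacle to write out carefully, is the second one: tracking that $u(\cdot,t)$ cannot oscillate around the periodic steady state $\psi$ while having only the prescribed number of critical points with finite limits, which is a zero-number-of-$u_x$ argument parallel to the proof of Proposition \ref{loczero} but now needing the periodicity to generate infinitely many forced critical points — contradicting that $N_k$ stabilizes for each $k$. Once that contradiction is in place, no $\vp\in\om(u_0)$ can be nonconstant, so every $\vp\in\om(u_0)$ is a constant function.
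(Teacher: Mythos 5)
Your main case ($\xi>0$) rests on an unjustified step: from the $2\xi$-periodicity of $\vp$ (Lemma \ref{propperiodic}) you conclude that $\tau(\vp)$ lies on a nonstationary periodic orbit of \eqref{sys}, i.e., that $\vp$ solves \eqref{steadyeq}. But at this stage nothing of the sort is known: elements of $\om(u_0)$ are only values of entire solutions of the parabolic equation, and proving that they are (constant) steady states is exactly what is at stake in case (C3). A $2\xi$-periodic $C^1$ function need not satisfy the ODE, so the whole subsequent discussion (minimal periods dividing $2\xi$, Lemma \ref{lemperiods}, confinement to a region of periodic orbits, and the appeal to $\ga^-<\ga^+$, which belongs to case (C2), not (C3)) has no foundation; moreover your closing zero-number contradiction is explicitly left as a sketch. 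The paper's route here is short and uses the reflections directly: if $\vp_x(\la)\ne 0$ for some $\la$, then $V_\la\vp$ has a simple zero at $x=\la$ and, being $2\xi$-periodic, has infinitely many simple zeros; since $u(\cdot,t_n)\to\vp$ in $C^1_{loc}(\R)$, this gives $z(V_\la u(\cdot,t_n))\to\infty$, contradicting the fact (a consequence of $u_0(-\infty)\ne u_0(+\infty)$, Subsection \ref{refl-sec}) that $z(V_\la u(\cdot,t))$ is finite and eventually nonincreasing. This is the missing idea in your proposal.

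The case $\eta_1^\infty=\eta_2^\infty$ is also mishandled: you try to exclude it by contradiction, but no contradiction is available, and indeed this case cannot be excluded. For every finite $t$ the zeros $\eta_1(t)\ne\eta_2(t)$ of $u_x(\cdot,t)$ are simple, so $u_x(\cdot,t)$ never has a multiple zero at finite time; a multiple zero appears only in the limit, i.e., for the entire solution $U$ with $U(\cdot,0)=\vp$ one gets $U_x(\eta_1^\infty,t)=U_{xx}(\eta_1^\infty,t)=0$ for all $t$ (Rolle's theorem between $\eta_1(t)$ and $\eta_2(t)$). Estimate \eqref{uxneqzero} is not violated, because $\eta_i(t)\to\eta_1^\infty$ does not force $\eta_i(t)$ to equal any fixed $\la$ at large times, and Lemma \ref{robustnesszero} produces multiple zeros of the approximating $u_x$ only when the limit $U_x$ is not identically zero. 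The correct outcome of this case is not its impossibility but the desired conclusion itself: Lemma \ref{lemzero} applied to $U_x$ shows that the persistent multiple zero forces $U_x\equiv 0$, hence $\vp$ is constant, which is how the paper treats it.
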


\begin{proof}
Suppose first that $\eta_1^\infty=\eta_2^\infty$. Since these are the
limits of the critical points $\eta_1(t)<\eta_2(t)$, it follows that
\begin{equation*}
 \lim_{t\to\infty} u_{xx}( \eta_1^\infty,t)= \lim_{t\to\infty} u_{x}(
 \eta_1^\infty,t)=0.
\end{equation*}
Consequently, for each $\vp\in \om(u_0)$ we have $\vp_{xx}(
\eta_1^\infty)=\vp_{x}(\eta_1^\infty)=0$. Therefore, if 
$U$ is the  entire solution of (\ref{eq1})  
with $U(\cdot,0)=\vp$ and $U(\cdot,t)\in\omega(u)$ for all $t\in\R$,
we have $U_x(\eta_1^\infty,t)=U_{xx}(\eta_1^\infty,t)=0$ for all
$t\in\R$. An application of  Lemma \ref{lemzero} on a suitable
interval, one shows easily that these relations can hold only if
$U_x\equiv 0$. In particular, $\vp$ is constant.

Let now $\eta_1^\infty<\eta_2^\infty$.
  Suppose $\vp\in \om(u_0)$ and there is $\la\in \R$ with
$\vp_x(\la)\ne 0$. Then $x=\la$ is a simple zero of the function 
$V_\la(\vp)(x)=\vp(2\la-x)-\vp(x)$. Lemma \ref{propperiodic} implies 
that  $V_\la(\vp)$  is a periodic
function, hence it has infinitely many simple zeros. 
Consequently, taking $t_n\to\infty$ such that $u(\codt,t_n)\to \vp$ in
$C^1_{loc}(\R)$, we have $z(V_\la u(\cdot,t_n))\to\infty$. 
However, as noted in Subsection \ref{refl-sec},
the condition $u_0(-\infty)\ne u_0(\infty)$ implies that 
$z(V_\la u(\cdot,t))$ is finite, hence, by the monotonicity,
it is bounded from above as
$t\to\infty$.  This contradiction completes the proof. 
\end{proof}

\begin{proof}[Proof of Theorem \ref{mainthm} in the case {\rm (C2)}]
We show that $\om(u_0)=\{\vp\}$ for some constant $\vp$. 
By Lemma \ref{cor-const}---and compactness and
connectedness---$\om(u_0)$ is an interval $[a_1,b_1]$ of constants
(which we identify  with the corresponding constant functions). 
Here $a_1\le b_1$ and we want to show that $a_1=b_1$.

We go by contradiction. Assume $a_1<b_1$. Then, clearly, there are
$a<b$ such that $(a,b)\subset (a_1,b_1)$ and either  
$f \ge 0$ on $(a,b)$ or $f \le 0$ on $(a,b)$. Assume the former, the
latter is analogous. 

For large $t$, one  of points $\eta_1(t)$, $\eta_2(t)$, further
denoted by $\eta(t)$,  
is a local minimum point of $u(\cdot,t)$. From the fact that  
$\om(u_0)$ consists of constants, we infer that given any $k>0$, 
\begin{equation}
  \label{eq:10}
\sup_{x\in
  (-k,k)}|u(\eta(t),t)-u(x,t)|\underset{t\to\infty}{\longrightarrow} 0. 
\end{equation}
As $a,b\in \om(u_0)$, \eqref{eq:10} implies in particular that 
 for some sequences $t_n\to\infty$,
$t'_n\to\infty$ one has $u(\eta(t_n),t_n)\to b$,  
$u(\eta(t'_n),t'_n)\to a$. However,  
since $\eta(t)$ is a local minimum
point and  $f\ge 0$ on $(a,b)$,  equation (\ref{eq1}) gives
$$
   (u(\eta(t),t) )'=u_t(\eta(t),t) = u_{xx}(\eta(t),t) + f(u(\eta(t),t)) \ge 0,
$$
whenever $u(\eta(t),t) \in (a,b)$.
This implies that if $n$ is so large that 
$\displaystyle u(\eta(t_n),t_n)>(a+b)/2,$ then $u(\eta(t),t)>(a+b)/2$ for 
all $t>t_n$ and we have a contradiction. 

This contradiction shows that $\om(u_0)$ consists of a single constant
$\la$. The invariance of $\om(u_0)$ shows that this constant is a 
steady state of \eqref{eq1}. The proof of Theorem \ref{mainthm} is now
complete. 
\end{proof}

\bibliographystyle{plain}

\end{document}